\DeclareRobustCommand\widecheck[1]{{\mathpalette\@widecheck{#1}}}
\def\@widecheck#1#2{%
    \setbox\z@\hbox{\m@th$#1#2$}%
    \setbox\tw@\hbox{\m@th$#1%
       \widehat{%
          \vrule\@width\z@\@height\ht\z@
          \vrule\@height\z@\@width\wd\z@}$}%
    \dp\tw@-\ht\z@
    \@tempdima\ht\z@ \advance\@tempdima2\ht\tw@ \divide\@tempdima\thr@@
    \setbox\tw@\hbox{%
       \raise\@tempdima\hbox{\scalebox{1}[-1]{\lower\@tempdima\box
\tw@}}}%
    {\ooalign{\box\tw@ \cr \box\z@}}}
\newcommand{\twgood}{\text{\Smiley{}}}
\newcommand{\twbad}{\text{\Frowny{}}}
\newcommand{\twnull}{\phantom{\text{\Smiley{}}}}
\renewcommand{\twgood}{\bullet}
\renewcommand{\twbad}{\circ}
\renewcommand{\twnull}{\phantom{\text{$\bullet$}}}
\newcommand{\nc}{\newcommand}
\nc{\bl}[1]{{{\color{blue}#1}}}
\nc{\br}[1]{{{\color{brown}#1}}}
\nc{\ch}{\bl{\checkmark}}
\nc{\fix}[1]{\mathcal{F}(#1)}
\nc{\fixx}{\mathcal{F}}
\nc{\cG}{\mathcal{G}}
\nc{\cH}{\mathcal{H}}
\nc{\C}{\mathbf{C}}
\nc{\G}{{\mathbb{G}}}
\nc{\Q}{\mathbf{Q}}
\nc{\Z}{\mathbf{Z}}
\nc{\F}{\mathbf{F}}
\nc{\R}{\mathbf{R}}
\nc{\Qp}{\mathbf{Q}_p}
\nc{\gen}[1]{\langle #1\rangle}
\nc{\ds}{\displaystyle}
\nc{\Oc}{\mathcal{O}}
\nc{\OF}{\Oc_F}
\nc{\OK}{\Oc_K}
\nc{\OL}{\Oc_L}
\nc{\Fr}{\mbox{Fr}}
\nc{\cI}{\mathcal{I}}
\nc{\cP}{\mathcal{P}}
\nc{\cC}{\mathcal{C}}
\nc{\cZ}{\mathcal{Z}}
\nc{\cL}{\mathcal{L}}
\nc{\cK}{\mathcal{K}}
\nc{\OLg}{\Oc_{L^g}}
\nc{\fP}{\mathfrak{P}}
\nc{\fp}{\mathfrak{p}}
\nc{\fd}{\mathfrak{d}}
\nc{\ft}{\mathfrak{t}}
\nc{\fc}{\mathfrak{c}}
\nc{\g}{{\rm gal}}
\nc{\ta}{{\,t}}
\nc{\GL}{\mathrm{GL}}
\nc{\PGL}{\mathrm{PGL}}
\nc{\PSL}{\mathrm{PSL}}
\nc{\SL}{\mathrm{SL}}
\nc{\bbar}[1]{\overline{#1}}
\nc{\vd}{\vec{d}\, }
\nc{\into}{\hookrightarrow}
\nc{\cots}{, \ldots, }
\nc{\pots}{+ \cdots +}
\nc{\seq}{\subseteq}
\nc{\note}[1]{\begin{center}\fbox{\textbf{#1}}\end{center}}
\nc{\cmmt}[1]{}
\nc{\bp}{\bullet \!\!\!\! +} 
\nc{\sst}{\scriptstyle}
\nc{\Imnaught}{[-\widecheck{\chi},\widehat{\chi}]}
\nc{\alp}{\underline{\alpha}}
\nc{\what}{\widehat}
\nc{\walp}{\what{\alp}}
\nc{\Chi}{\mathcal{X}}
\nc{\lChi}{\!\raisebox{-0.2em}{\tiny$\Chi$}}
\nc{\marp}{\marginpar}
\nc{\jj}[1]{\marp{\bl{#1}}}
\nc{\dr}[1]{\marp{\br{#1}}}
\DeclareMathOperator{\gal}{Gal}
\DeclareMathOperator{\Gal}{Gal}
\DeclareMathOperator{\Out}{Out}
\newtheorem{thm}{Theorem}[section]
\newtheorem{lemma}[thm]{Lemma}
\newtheorem{cor}[thm]{Corollary}
\numberwithin{table}{section}
\numberwithin{figure}{section}
\numberwithin{equation}{section}
\theoremstyle{remark}
\title{Artin $L$-functions of small conductor}
\author{John W.\ Jones}
\address{School of Mathematical and Statistical Sciences, Arizona
  State University, PO Box 871804, Tempe, AZ 85287} 
\email{jj@asu.edu}
\author{David P.\ Roberts}
\address{Division of Science and Mathematics, University of
  Minnesota-Morris, Morris, MN 56267}
\email{roberts@morris.umn.edu}
\thanks{DPR's work on this paper was supported by Grant \#209472 from the Simons Foundation 
and Grant \#1601350 from the National Science Foundation.}
\begin{document}
\begin{abstract}  
 We study the problem of finding the Artin $L$-functions with the
 smallest conductor for a given Galois type.  We adapt standard
  analytic techniques to our novel situation of fixed Galois type and
  get much improved lower bounds on the smallest conductor.  For small
  Galois types we use complete tables of number fields to determine
  the actual smallest conductor.
\end{abstract}

\maketitle

\setcounter{tocdepth}{1}
\tableofcontents

\section{Overview}
\label{overview}
   Artin $L$-functions $L(\Chi,s)$ are remarkable analytic objects built from number fields.  
Let $\overline{\Q}$ be the algebraic closure of the 
rational number 
 field $\Q$ inside the field of complex numbers $\C$.   
Then Artin $L$-functions are indexed by continuous complex characters 
$\Chi$ of the absolute Galois group 
$\G = \Gal(\overline{\Q}/\Q)$, with the unital character $1$ giving the Riemann zeta
function $L(1,s) = \zeta(s)$.   An important problem in modern number theory is
to obtain a fuller understanding of these higher
analogs of the Riemann zeta function.   The analogy is expected to be very tight:  all Artin $L$-functions are 
expected by the Artin conjecture to be entire except perhaps for a
pole at $s=1$; they  
are all expected to satisfy the Riemann hypothesis that
all zeros with $\mbox{Re}(s) \in (0,1)$ satisfy $\mbox{Re}(s)=1/2$.  
 
   The two most basic invariants of an Artin $L$-function $L(\Chi,s)$ 
 are defined via the two explicit elements of $\G$, the
 identity $e$
 and the complex conjugation element $\sigma$.   
 These invariants are the degree $n = \Chi(e)$ and the
 signature $r = \Chi(\sigma)$ respectively.  
 A measure of the complexity
 of $L(\Chi,s)$ is its conductor $D \in \Z_{\geq 1}$,
 which can be computed from the discriminants of 
related number fields. 
    It is best for purposes such as ours to focus instead
    on the root conductor $\delta = D^{1/n}$.

     In this paper, we aim to find the simplest Artin 
$L$-functions exhibiting a given Galois-theoretic 
behavior.  To be more precise, 
 consider triples $(G,c,\chi)$ consisting of a finite group $G$,
 an involution $c \in G$, and a faithful character $\chi$.  We say that $\Chi$
 has {\em Galois type} $(G,c,\chi)$ if there is a surjection $h : \G \rightarrow
 G$ with $h(\sigma) = c$, and 
 $\Chi = \chi \circ h$.   Let $\cL(G,c,\chi)$ be the
 set of $L$-functions of type $(G,c,\chi)$, and
 let $\cL(G,c,\chi;B)$ be the subset
 consisting of $L$-functions with root conductor
 at most $B$.  Two natural problems for any given Galois type $(G,c,\chi)$ are
 \begin{itemize}
 \item[\textbf{1}:]  Use known and the above
 conjectured properties of $L$-functions to get a 
lower bound $\frak{d}(G,c,\chi)$ on the root conductors 
 of $L$-functions in $\cL(G,c,\chi)$.
 \item[\textbf{2}:] Explicitly
 identify the sets $\cL(G,c,\chi;B)$ with $B$ as large as possible.
 \end{itemize}    
 This paper gives answers to both problems, although for
 brevity we often fix only $(G,\chi)$ and work instead
 with the sets $\cL(G,\chi;B) := \cup_c \cL(G,c,\chi;B)$.

 There is a large literature on a special case of the situation 
 we study.  Namely let $(G,c,\phi)$ be a Galois type
 where $\phi$ is the character of a transitive permutation
 representation of $G$.   Then the  set 
 $\cL(G,c,\phi;B)$ is exactly 
 the set of Dedekind zeta functions $\zeta(K,s)$ 
 arising from a corresponding
 set $\cK(G,c,\phi;B)$ of arithmetic equivalence classes of 
 number fields.    In this context, root conductors   
 are just root discriminants, and lower bounds date back to
 Minkowski's work on the geometry of numbers.   
 Use of Dedekind zeta functions as in {\bf 1} above began with
 work of Odlyzko \cite{od-disc1,od-disc1a,od-disc2}, Serre
 \cite{serre-minorations}, Poitou
 \cite{poitou-minorations,poitou-petits}, and Martinet \cite{martinet}. 
Extensive responses to {\bf 2} came shortly 
 thereafter, with papers often focusing on a single degree
 $n=\phi(e)$.  Early 
 work for  quartics, quintics, sextics,
 and septics include respectively
 \cite{bf-quartic2,f-quartic1,bfp-quartic3}, 
 \cite{spd}, \cite{pohst,BMO,olivier1,olivier2,olivier3}, and
 \cite{letard}.
 Further results towards {\bf 2} in higher degrees are
 extractable from the websites 
 associated to \cite{jr-global-database},
 \cite{kluners-malle}, and \cite{LMFDB}.
 
 The full situation that we are studying here was identified
 clearly by Odlyzko in \cite{odlyzko-durham}, who responded to {\bf 1} with 
 a general lower bound.  However this more general
 case of Artin $L$-functions has almost no subsequent presence
 in the literature.  A noticeable exception is a 
 recent paper of Pizarro-Madariaga \cite{PM}, who improved 
 on Odlyzko's results on {\bf 1}.  A novelty of our 
 paper is the separation into Galois types.  For many 
 Galois types this separation allows us to go 
 considerably further on {\bf 1}.  This paper is also the first 
 systematic study of {\bf 2} beyond the case 
 of number fields.   
 
Sections~\ref{Artin} and \ref{Signature} review background on Artin
$L$-functions and tools used to bound their conductors.
Sections~\ref{Type}--\ref{otherchoices} form the new material on the
lower bound problem {\bf 1}, while Sections~\ref{S5}--\ref{discussion}
focus on the tabulation problem {\bf 2}.  Finally, Section~\ref{asymp}
returns to {\bf 1} and considers asymptotic lower bounds for
root conductors of Artin $L$-functions in certain families.   
In regard to {\bf 1}, Figure~\ref{amalia-plot} and
Corollary~\ref{limitcor} give a quick indication of how 
our type-based lower bounds compare with the earlier
degree-based lower bounds.  In regard to both {\bf 1} and {\bf 2}, 
Tables~\ref{tablelabel1}--\ref{tablelabel8}  
show how the new lower bounds compare with
actual first conductors for many types.
 
\cmmt{ 
  $**********$
 Section~\ref{Artin} reviews some of the basic
 formalism behind Artin $L$-functions,
 with emphasis on the
 direct connection with
 number fields.    In particular,
 suppose $\phi$ is the 
 character of a transitive
 permutation representation of $\G$;  
 then the  set 
 $\cL(G,c,\phi;B)$ 
 is naturally identified
 with the set of Dedekind zeta functions $\zeta(K,s)$ 
 arising from a corresponding
 set $\cK(G,c,\phi;B)$ of arithmetic equivalence classes of 
 number fields.

 Section~\ref{Signature} summarizes the
 literature on lower bounds
 for root conductors.
 We restrict attention to
  bounds which are 
 conditional on standard analytic hypotheses,
 namely the Artin conjecture and 
 the Riemann hypothesis for the relevant
 auxiliary $L$-functions.  
  If $\chi$ takes only 
 nonnegative values, 
 like permutation characters do,
  there 
 is a direct method for
 obtaining a conditional lower bound
 for the smallest root conductor. 
 For general $\chi$, there is 
 an indirect method 
 that uses the very general
 conductor relation \eqref{gencondrel} 
 based on the auxiliary character $\phi_S = \chi \bar{\chi}$.     
 
 Sections~\ref{Type}--\ref{otherchoices} form 
 the new material on analytic lower
 bounds.  Instead
 of the general
 conductor relation involving $\phi_S$,
 we use type-based conductor
 relations that depend on
 the choice of an auxiliary nonnegative character $\phi$.  
 The form
 of these relations is given in \eqref{maincondrel} and
 the relation is made more explicit in Section~\ref{choices}
 for four simple and useful types of  $\phi$.
 Section~\ref{otherchoices} describes the polytope of possible choices for $\phi$.   
 
 Our formalism of triples $(G,c,\chi)$ captures the strong tradition in 
 the literature of paying close attention to the placement of complex conjugation.  
 However in the remaining sections we keep things relatively brief by
 working simply with $(G,\chi)$, which we 
 also call a type.   We pursue the problem of identifying 
 the sets  $\cL(G,\chi;B) := \cup_c \cL(G,c,\chi;B)$ with
 a focus on finding at least the the minimal root conductor 
 $\delta_1(G,\chi)$.

 Sections~\ref{S5}--\ref{discussion} focus on this tabulation problem.    
 Conductor relations similar 
 to \eqref{maincondrel} let one
 use known typically large complete
 lists  coming from number field tables
 to determine new typically smaller complete lists
 of $L$-functions.  
 Section~\ref{S5} explains 
 the process and illustrates it
 by working out the case $G=S_5$ 
 in detail.   Section~\ref{Tables} 
 considers $G$ arising as 
 transitive subgroups of 
 $S_n$, 
 restricting to $n \leq 9$ for
  solvable groups and $n \leq 7$ for 
  nonsolvable groups.  
  It presents conditional 
  lower bounds 
  $\mathfrak{d}(G,\chi)$
  and then initial segments 
 $\cL(G,\chi;B)$,
 almost always non-empty.  
 As illustrated by Figure~\ref{amalia-plot}, our 
 type-based lower bounds $\mathfrak{d}(G,\chi)$ are 
 usually substantially larger then
 the best degree-based lower bounds coming
 from \cite{PM}.   We find always 
 \begin{equation}
 \mathfrak{d}(G,\chi)< \delta_1(G,\chi),
 \end{equation}
 thus no contradiction to the Artin
 conjecture or Riemann hypothesis.  
 In fact, typically $\delta_1(G,\chi)$ 
 is considerably larger than 
 $\mathfrak{d}(G,\chi)$.   
 Section~\ref{discussion} 
 discusses several aspects of
 the information presented in the
 tables.   
 
 Section~\ref{asymp} shows in Corollary~\ref{limitcor}
 that restricting the type in simple ways gives
 much increased
 asymptotic lower bounds.  It speculates that these larger
 lower bounds may hold even with no restriction
 on the type.  
 }
 
 Artin $L$-functions
 have recently become much more
 computationally accessible through
 a package implemented in {\em Magma}
 by Tim Dokchitser.   Thousands
 are now collected 
 in a section on the LMFDB \cite{LMFDB}.  
 The present work 
 increases our understanding of all this
 information in several ways, including by providing completeness
 certificates for certain ranges.

\section{Artin $L$-functions} 
\label{Artin}
 In this section we provide some background.   An important 
 point is that our problems allow us to restrict consideration
 to Artin characters which take rational values only.  In this setting,
 Artin $L$-functions can be expressed as products and quotients 
 of roots of Dedekind zeta functions, minimizing the background needed.
General references on Artin $L$-functions include \cite{Mar77,Mur01}. 

\subsection{Number fields}  
A number field $K$ has many invariants
relevant for our study.   First of all, there is the degree $n = [K:\Q]$.  
The other invariants we need are local in that they are associated 
with a place $v$ of $\Q$ and can be read off from the corresponding
completed algebra $K_v = K \otimes \Q_v$, but not from other
completions.  For $v=\infty$, the complete 
invariant is the signature $r$, defined by $K_\infty \cong \R^{r} \times \C^{(n-r)/2}$.
It is more convenient sometimes to work with the eigenspace dimensions
for complex conjugation, $a = (n+r)/2$ and $b = (n-r)/2$.  
For an ultrametric place
$v=p$, the full list of invariants is complicated.   The most
basic one is the positive integer $D_p = p^{c_p}$ generating the discriminant
ideal of $K_p/\Q_p$.   We package the $D_p$ into the single
invariant $D = \prod_p D_p \in \Z_{\geq 1}$, the absolute  discriminant
of $K$.  

\subsection{Dedekind zeta functions}  Associated with
a number field is its Dedekind zeta function
\begin{equation}
\label{prodser}
\zeta(K,s) = \prod_{p} \frac{1}{P_p(p^{-s})} = \sum_{m=1}^\infty \frac{a_m}{m^s}.
\end{equation}
Here the polynomial $P_p(x) \in \Z[x]$ is a $p$-adic invariant.  It has
degree $\leq n$ with equality if and only if $D_p=1$.  The integer
 $a_m$ is the number of ideals of index $m$ in the ring of integers $\OK$.

 \subsection{Analytic properties of Dedekind zeta functions} 
 Let $\Gamma_\R(s) = \pi^{-s/2} \Gamma(s/2)$, where 
 $\Gamma(s) = \int_0^\infty x^{s-1} e^{-x} dx$ is the standard
 gamma function. Let
 \begin{equation}
 \label{completed}
 \what{\zeta}(K,s) = D^{s/2} \Gamma_\R\left(s\right)^a \Gamma_\R\left(s+1\right)^b \zeta(K,s).
 \end{equation}
Then this completed Dedekind zeta function 
 $\what{\zeta}(K,s)$ meromorphically continues to the whole complex plane, 
 with simple poles at $s=0$ and $s=1$ being its only singularities.
It satisfies the functional equation
$\what{\zeta}(K,s) = \what{\zeta}(K,1-s)$.
 
\subsection{Permutation characters}  We recall from the introduction that throughout this paper we are taking 
  $\overline{\Q}$ to be the algebraic closure of $\Q$ in $\C$ and 
  $\G = \Gal(\overline{\Q}/\Q)$ its absolute Galois group.     A degree $n$ number field
  $K$ then corresponds to the transitive $n$-element 
  $\G$-set $X = \mbox{Hom}(K,\overline{\Q})$.  
  A number field thus has a permutation character $\Phi = \Phi_K = \Phi_X$
  with $\Phi(e)=n$.   Also signature has the character-theoretic
  interpretation $\Phi(\sigma) = r$, where $\sigma$
  as before is the complex conjugation element.
    
  \subsection{General characters and Artin $L$-functions}  Let $\Chi$ be a character of $\G$.  
  Then one has an associated Artin $L$-function $L(\Chi,s)$ and conductor
  $D_\Chi$, agreeing
  with the Dedekind zeta function $\zeta(K,s)$ and the discriminant $D_K$
   if $\Chi$ is the permutation
  character of $K$.    The function $L(\Chi,s)$ has both an Euler product and
  Dirichlet series representation as in \eqref{prodser}. 
     In general, if  
  $\Phi = \sum_\Chi m_{\lChi} \Chi$ then   
\begin{align}
\label{decomp}
L(\Phi,s) & = \prod_{\Chi} L(\Chi,s)^{m_{\lChi}}  & D_\Phi & = \prod_{\Chi} D_\Chi^{m_{\lChi}}.
\end{align}
One is often interested in \eqref{decomp} where the $\Chi$ are irreducible characters.  

For a finite set of primes $S$, let $\overline{\Q}_S$  
  be the compositum of all number fields in $\overline{\Q}$
  with discriminant divisible only by primes in $S$.  
  Let $\G_S = \Gal(\overline{\Q}_S/\Q)$ be the corresponding
  quotient of $\G$.  Then for primes $p \not \in S$ one has
  a well-defined Frobenius conjugacy class $\Fr_p$ in $\G_S$.
  The local factor $P_p(x)$ in \eqref{prodser} is the characteristic polynomial
  $\det(1 - \rho(\Fr_p) x)$, where $\rho$ is a representation
  with character $\Chi$.

   \subsection{Relations with other objects} 
   Artin $L$-functions of degree $1$ are exactly Dirichlet
 $L$-functions, so that $\Chi$ can be identified with a
 faithful character of the quotient group $(\Z/D\Z)^\times$
 of $\G$, with $D$ the conductor of $\Chi$.
  Artin $L$-functions coming from irreducible degree $2$ characters
   and conductor $D$ are expected to come from
   cuspidal  modular forms on $\Gamma_1(D)$, holomorphic if $r=0$ and
    nonholomorphic otherwise.  This expectation
    is proved in all cases, except for those with $r = \pm 2$ 
    and projective image the nonsolvable group $A_5$. 
      In general,  to understand how an Artin $L$-function $L(\Chi,s)$
    qualitatively relates to other objects, one needs to
    understand its Galois theory, including 
    the placement of complex conjugation; in other words, 
    one needs to identify its Galois type.   
    To be more quantitative, one brings in the conductor.  
  
  \subsection{Analytic Properties of Artin $L$-functions} 
  An Artin $L$-function
  has a meromorphic continuation and functional
  equation, although each with an extra complication 
  in comparison with the special case of Dedekind zeta functions.  
   For the 
  meromorphic continuation, the behavior at $s=1$ is known:
  the pole order is the multiplicity $(1,\Chi)$ of $1$ 
  in $\Chi$.    The complication is that one has 
  poor 
  control over other possible poles.  The Artin conjecture  for $\Chi$
  says however that there are no poles other than $s=1$.

The completed $L$-function
\[ \what{L}(\Chi,s) = D_\Chi^{s/2} \Gamma_\R(s)^a \Gamma_\R(s+1)^b L(\Chi,s)\]
satisfies the functional equation
   \[
  \what{L}(\Chi,1-s) = w \what{L}(\overline{\Chi},s)
  \]
with root number $w$.
  Irreducible characters of any compact group come
  in three types, orthogonal, non-real, and symplectic.
  The type is identified by the Frobenius-Schur
  indicator, calculated with respect to the Haar probability
  measure $dg$:
  \[
  FS(\chi)  = 
  \int_{G} \chi(g^2) \, dg \in \{-1,0,1\}.
  \]
  For orthogonal characters $\Chi$ of $\G$, one has $\Chi = \overline{\Chi}$
  and moreover $w = 1$.   The complication in
  comparison with permutation characters 
  is that for the other two types, 
  the root number $w$ is 
  not necessarily $1$.  For symplectic characters,
  $\Chi = \overline{\Chi}$ and $w$
  can be either of the two possibilities $1$ or
  $-1$.  For non-real characters, $\Chi \neq \overline{\Chi}$ 
  and $w$ is some algebraic number of norm $1$.
  
  Recall from the introduction that an Artin $L$-function is said to satisfy the Riemann 
  hypothesis if all its zeros in the critical strip $0<\mbox{Re}(s)<1$ are actually on the critical line $\mbox{Re}(s)= 1/2$.  
  We will be using the Riemann hypothesis through 
 Lemma~\ref{lowboundlem}.  If we replaced the function \eqref{Odlyzko} with the appropriately 
 scaled version of (5.17) from \cite{PM}, 
 then our lower bounds would be only conditional on the Artin
 conjecture, which is   
 completely known for some Galois types $(G,c,\chi)$.  However 
 the bounds obtained would be much smaller, and the comparison
 with first conductors as presented in Tables~\ref{tablelabel1}--\ref{tablelabel8} below would be 
 less interesting.  

\subsection{Rational characters and rational Artin $L$-functions}   
\label{rat-chars}
The abelianized Galois 
group $\G^{\rm ab}$ acts on continuous complex characters of profinite
groups through its action  
on their values.    If $\Chi'$ and $\Chi''$ are conjugate via this action 
then their conductors agree: 
\begin{equation}
\label{discequal}
D_{\Chi'} = D_{\Chi''}.  
\end{equation}
Our study is 
simplified by this equality because it allows us to 
study a given irreducible character $\Chi'$ by
studying instead the rational character $\Chi$
obtained by summing its conjugates.    

By the Artin induction theorem \cite[Prop.~13.2]{feit}, 
a rational character $\Chi$
can be expressed as a rational linear combination of 
permutation characters:
\begin{equation}
\label{chiexpress}
\Chi = \sum k_\Phi \Phi.
\end{equation}
For general characters $\Chi'$, computing the Frobenius traces 
$a_p = \Chi'(\Fr_p)$ requires the results of \cite{dok-dok}. 
Similarly the computation of bad Euler factors and the root number $w$ present difficulties.  
For Frobenius traces and bad Euler factors,
these complications are 
not present for rational characters $\Chi$ because of \eqref{chiexpress}.

\section{Signature-based analytic lower bounds}   
\label{Signature}
Here and in the 
 next section we aim to be brief, with the main point being to explain how type-based
 lower bounds are usually much larger than signature-based lower bounds.  
We employ the standard framework for establishing lower bounds for
conductors and discriminants, namely Weil's explicit formula.
General references for the material
 in this section are \cite{odlyzko-durham,PM}.

\subsection{Basic quantities}      
\label{basic-quantities}

The theory allows general test functions that satisfy certain axioms.
We work only with a function introduced by Odlyzko (see \cite[(9)]{poitou-petits}),  
\begin{equation}
\label{Odlyzko}
f(x) = \left\{ \begin{array}{ll}  {\displaystyle (1-x) \cos(\pi x) + \frac{\sin(\pi x)}{\pi}}, & \mbox{ if $0 \leq x \leq 1$,} \\
0, & \mbox{ if $x>1$}.
\end{array}
\right.
\end{equation}
For $z \in [0,\infty)$ let
\begin{align*}
N(z) & =  \log(\pi) +  \int_0^{\infty}
\frac{e^{-x/4}+e^{-3x/4}}{2(1-e^{-x})} f(x/(2z))
  - \frac{e^{-x}}{x}  \,dx, \\
&= \gamma+ \log(8\pi) + \int_0^\infty \frac{f(x/z)-1}{2\sinh(x/2)} \, dx \\ 
&= \gamma+\log(8\pi) + \int_0^z \frac{f(x/z)-1}{2\sinh(x/2)} \, dx
-\log\left(\frac{e^{z/2}+1}{e^{z/2}-1} \right), \\
R(z) & =   \int_0^{\infty}\frac{e^{-x/4}-e^{-3x/4}}{2(1-e^{-x})} f(x/(2z))\, dx, \\
&= \int_0^z \frac{f(x/z)}{2\cosh(x/2)}\, dx, \\
P(z) & =  4 \int_0^{\infty}  f(x/z) \cosh(x/2)\, dx \\
&= \frac{256 \pi^2 z \cosh^2(z/4)}{(z^2+4\pi^2)^2}.
\end{align*}
The simplifications in the integrals for $N(z)$ and $R(z)$ are fairly
standard and apply to
any test function, with the exception of the final steps which make
use of the support for $f(x)$.  Evaluation of $P(z)$ depends on the
choice of $f(x)$.
The integrals for $N(z)$ and $R(z)$ cannot be evaluated in closed form
like the third, but, as indicated in \cite[\S2]{poitou-petits}, they do have simple limits $N(\infty) = \log(8 \pi) +  
\gamma$ and  
$R(\infty) = \pi/2$ as $z \rightarrow \infty$.  
Here $\gamma \approx 0.5772$ is the Euler $\gamma$-constant.
The constants $\Omega = e^{N(\infty)} \approx 44.7632$ and
$e^{R(\infty)} \approx 4.8105$,
as well as their product $\Theta = e^{N(\infty)+R(\infty)} \approx
215.3325$, will play important 
roles in the sequel.   

\subsection{The quantity $M(n,r,u)$}  Consider triples $(n,r,u)$ 
of real numbers with $n$ and $u$ positive and $r \in [-n,n]$.  For such
a triple, define
\[
M(n,r,u) =  \mbox{Max}_z \left( \exp \left(  N(z) + \frac{r}{n} R(z)  - \frac{u}{n} P(z) \right) \right).
\]
It is clear that $M(n,r,u) = M(n/u,r/u,1)$.  
Accordingly we regard $u=1$ as the essential case
and abbreviate $M(n,r)=M(n,r,1)$.   
For fixed $\epsilon \in [0,1]$ and $u>0$, one has the asymptotic 
behavior
\begin{equation}
\label{asymptotic1}
\lim_{n \rightarrow \infty} M(n,\epsilon n) = \Omega^{1-\epsilon} \Theta^{\epsilon} \approx 
44.7632^{1-\epsilon} 215.3325^{\epsilon}.
\end{equation}
Figure~\ref{contourM} gives one a feel for
the fundamental function $M(n,r)$.   
Particularly important are the univariate functions 
$ 
M(n,0)$ and $
 M(n,n)$ corresponding
to the left and right edges of this figure.   
\begin{figure}[htb]
\centering
\includegraphics[width=4.5in]{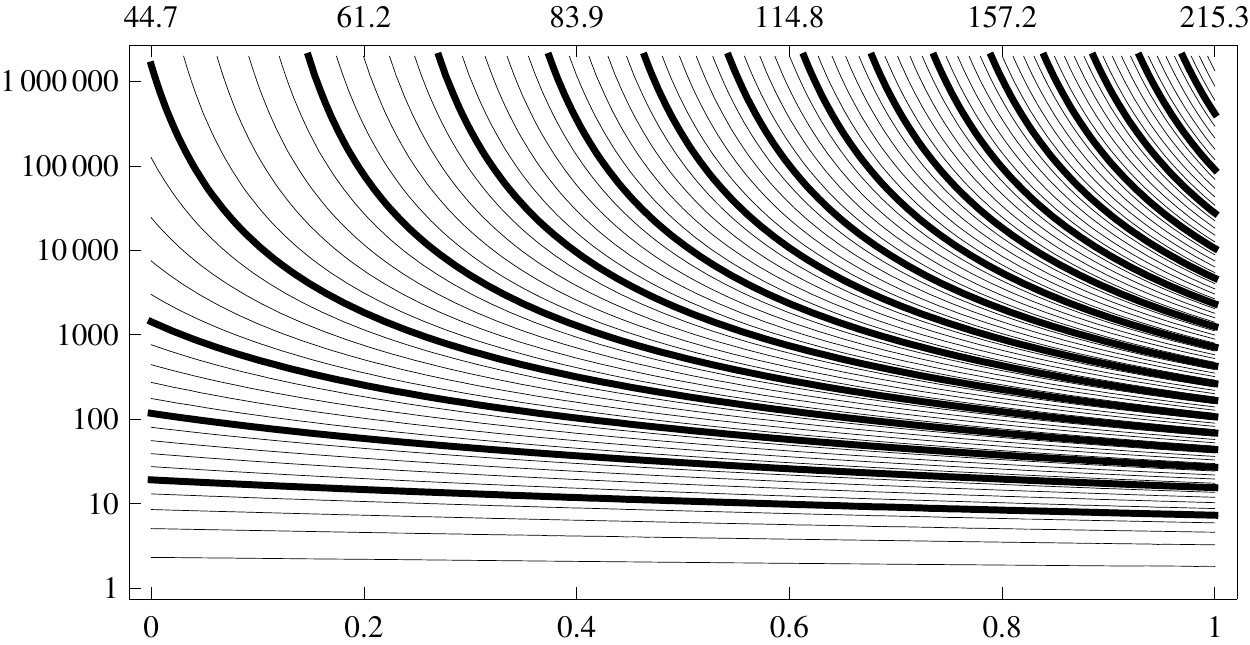}
\caption{\label{contourM} A contour plot of $M(n,\epsilon n)$ in the
 window $[0,1] \times [1,1 \, 000 \, 000]$ 
of the $\epsilon$-$n$ plane,
with a vertical logarithmic scale and contours at $2$, $4$, $6$, $8$, ${\bf 10}$, \dots, {\bf 170}, 172, 174, 176.  
 Some limits for $n \rightarrow \infty$   
are shown on the upper boundary.}
\end{figure}

\subsection{Lower bounds for root discriminants} Suppose that 
 $\Phi$ is a nonzero Artin character which takes real values only.  We say 
 that $\Phi$ is nonnegative if 
\begin{equation}
\label{nonnegativity}
\Phi(g) \geq 0 \mbox{ for all $g \in \G$.}
\end{equation}
This nonnegativity ensures that the inner product 
$(\Phi,1)$ of $\Phi$ with the unital character $1$ is positive.  
A central result of the theory, a special case of the statement in \cite[(7)]{poitou-petits}, then serves us as a lemma. 
\begin{lemma}  
\label{lowboundlem}
The lower bound 
\[
\delta_{\Phi} \geq M(n,r,u).
\]
is valid for all nonnegative characters $\Phi$ with
$(\Phi(e),\Phi(\sigma),(\Phi,1)) = (n,r,u)$
and $L(\Phi,s)$ satisfying the Artin conjecture and the Riemann hypothesis. 
\end{lemma}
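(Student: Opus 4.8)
The plan is to deduce the inequality from Weil's explicit formula applied to the completed $L$-function $\what L(\Phi,s) = D_\Phi^{s/2}\Gamma_\R(s)^a\Gamma_\R(s+1)^b L(\Phi,s)$, with $a=(n+r)/2$ and $b=(n-r)/2$, using the Odlyzko test function $f$ of \eqref{Odlyzko} rescaled to have support $[0,z]$ for a free parameter $z>0$. After the standard contour shifts, the explicit formula for $\what L(\Phi,s)$ takes the shape
\[
\frac1n\log D_\Phi = N(z) + \frac rn R(z) - \frac un P(z) + Z(z) + \Pi(z),
\]
where $N(z)$, $R(z)$, $P(z)$ are the functions of Section~\ref{basic-quantities}, $Z(z)$ is a sum over the nontrivial zeros of $\what L(\Phi,s)$, and $\Pi(z)$ is a sum over prime powers. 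So the proof reduces to two tasks: (a) checking that the archimedean data assembles into the first three terms, and (b) showing $Z(z)\ge 0$ and $\Pi(z)\ge 0$.

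For (a), the factor $D_\Phi^{s/2}$ supplies $\log D_\Phi$; writing $\Gamma_\R(s)^a\Gamma_\R(s+1)^b = \bigl(\Gamma_\R(s)\Gamma_\R(s+1)\bigr)^{n/2}\bigl(\Gamma_\R(s)/\Gamma_\R(s+1)\bigr)^{r/2}$ exhibits the rest of the archimedean factor as a weight-$n$ part producing the $\log\pi$ and the symmetric integral in $N(z)$, and a weight-$r$ part producing the integral in $R(z)$ --- the $e^{-x/4}$ versus $e^{-3x/4}$ in the two integrands reflecting the arguments $s/2$ and $(s+1)/2$ on the critical line. Finally, the pole of $L(\Phi,s)$ at $s=1$, matched by the functional equation with a pole of $\what L(\Phi,s)$ at $s=0$ and of order $(\Phi,1)=u$, contributes $-u\,P(z)$, where one uses the evaluation $P(z)=256\pi^2 z\cosh^2(z/4)/(z^2+4\pi^2)^2$ of $4\int_0^\infty f(x/z)\cosh(x/2)\,dx$.

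For (b), the Artin conjecture ensures $\what L(\Phi,s)$ is holomorphic away from the prescribed poles, so $Z(z)$ genuinely runs over zeros in the critical strip; the Riemann hypothesis places each on $\mathrm{Re}(s)=\tfrac12$, and the defining feature of Odlyzko's $f$ is that its Fourier transform is nonnegative on $\R$, whence $Z(z)\ge 0$. The prime sum $\Pi(z)$ has coefficients $\Phi(\Fr_p^k)\ge 0$ at the unramified primes by the hypothesis \eqref{nonnegativity}, with the parallel sign statement holding for the finitely many ramified Euler factors, so $\Pi(z)\ge 0$ as well; note that $u=(\Phi,1)>0$ by nonnegativity, so the pole term is genuinely present. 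Discarding $Z(z)$ and $\Pi(z)$ gives $\tfrac1n\log D_\Phi\ge N(z)+\tfrac rn R(z)-\tfrac un P(z)$ for every $z>0$; exponentiating and taking the maximum over $z$ yields $\delta_\Phi\ge M(n,r,u)$.

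The genuinely laborious step is (a): turning the archimedean and polar contributions of the explicit formula into exactly the integral and closed forms defining $N$, $R$, and $P$ in Section~\ref{basic-quantities} --- in particular matching the antisymmetric gamma contribution with $r\,R(z)$ and verifying the evaluation of $P(z)$ --- which are precisely the reductions the text calls ``fairly standard'' and which use the compact support of $f$. Part (b), by contrast, is short once the explicit formula is on the table, and the whole argument is the special case of \cite[(7)]{poitou-petits} corresponding to this $f$ and a single nonnegative character $\Phi$.
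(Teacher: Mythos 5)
The paper does not give its own proof of this lemma --- it is stated as a known result, a special case of \cite[(7)]{poitou-petits} --- and your proposal correctly reconstructs the Weil explicit-formula argument underlying that reference, specialized to Odlyzko's test function $f$ and a single nonnegative character $\Phi$, with the right sign bookkeeping for the archimedean, pole, zero, and prime terms. This is the same approach the paper invokes, just carried out in more detail than the paper chose to display.
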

If $\Phi$ is a permutation character, then the nonnegativity condition \eqref{nonnegativity}
is automatically satisfied.  This makes the application of the analytic theory 
to lower bounds of root discriminants of fields relatively straightforward.  

\subsection{Lower bounds for general Artin conductors}
To pass from nonnegative characters to general characters, the classical 
method uses the following lemma.
\begin{lemma}[Odlyzko \cite{odlyzko-durham}]
\label{lemma1} The conductor relation
\begin{equation}
\label{gencondrel}
\delta_\Chi \geq \delta_{\Phi}^{n/(2n-2)}
\end{equation}
holds for any degree $n$ character 
$\Chi$ and its absolute square  $\Phi = |\Chi|^2$.
\end{lemma}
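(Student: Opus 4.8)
The plan is to reduce the claimed inequality to a statement about local conductor exponents and then to an elementary inequality about invariants of a subgroup. Write $c_p(\cdot)$ for the exponent of $p$ in a conductor, so that $D_{\Chi} = \prod_p p^{c_p(\Chi)}$ and $D_{\Phi} = \prod_p p^{c_p(\Phi)}$. Since $\Phi$ has degree $n^2$, the bound $\delta_{\Chi} \geq \delta_{\Phi}^{n/(2n-2)}$ is equivalent to $D_{\Phi} \leq D_{\Chi}^{\,2n-2}$, and this follows once we establish the local inequality $c_p(\Phi) \leq (2n-2)\,c_p(\Chi)$ for every prime $p$. The whole argument thus takes place at a single prime.

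To set it up, fix a representation $\rho \colon \G \to \GL(V)$ with character $\Chi$, where $\dim V = n$. Since $\bar\rho$ and $\rho^{*}$ have the same character, $\Phi = |\Chi|^2$ is the character of $V \otimes V^{*} = \operatorname{End}(V)$ with $\G$ acting by conjugation. Because $n \neq 0$, the trace map splits this $\G$-module as $\operatorname{End}(V) = \C\cdot\operatorname{id}_V \oplus \mathfrak{sl}(V)$, the first summand being the trivial representation. Hence $\Phi = 1 + \Psi$, where $\Psi$ is the character of $\mathfrak{sl}(V)$; as the trivial character is unramified everywhere, \eqref{decomp} gives $c_p(\Phi) = c_p(\Psi)$, and it remains to bound $c_p(\Psi)$.

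Here I would argue termwise along the ramification filtration. Let $I_0 \supseteq I_1 \supseteq \cdots$ be the higher ramification groups at $p$, in lower numbering, of a finite Galois extension through which $\rho$ factors; then by the standard formula $c_p(W) = \sum_{j \geq 0} \frac{|I_j|}{|I_0|}\bigl(\dim W - \dim W^{I_j}\bigr)$ for any $\G$-module $W$. For each $j$ the space $\operatorname{End}(V)^{I_j} = \operatorname{End}_{I_j}(V)$ contains $\operatorname{id}_V$, so $\mathfrak{sl}(V)^{I_j}$ has codimension $1$ in it, exactly as $\mathfrak{sl}(V)$ has codimension $1$ in $\operatorname{End}(V)$; consequently $\dim\mathfrak{sl}(V) - \dim\mathfrak{sl}(V)^{I_j} = n^{2} - \dim\operatorname{End}_{I_j}(V)$. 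So it suffices to prove, for every subgroup $H$ acting on $V$,
\[
n^{2} - \dim\operatorname{End}_{H}(V) \;\leq\; (2n-2)\bigl(n - \dim V^{H}\bigr).
\]
Writing $V = V^{H} \oplus W$ as $H$-modules with $W^{H} = 0$ and putting $k = \dim V^{H}$, one has $\operatorname{End}_{H}(V) = \operatorname{End}(V^{H}) \oplus \operatorname{End}_{H}(W)$ because the mixed $\operatorname{Hom}$-spaces vanish (neither $W$ nor $W^{*}$ has $H$-fixed vectors), so $\dim\operatorname{End}_{H}(V) \geq k^{2}+1$ when $k<n$ (and $=n^{2}$ when $k=n$, where the inequality is trivial). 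The inequality then reduces to $(n-k)(n+k) - 1 \leq (2n-2)(n-k)$, that is, to $(n-k-1)^{2} \geq 0$. Summing the termwise inequality against the weights $|I_j|/|I_0|$ yields $c_p(\Psi) \leq (2n-2)\,c_p(\Chi)$, which completes the local step and hence the proof.

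The main obstacle is conceptual rather than computational: one must spot the right decomposition, namely that the trivial constituent of $\Chi\bar\Chi$ — invisible to the conductor — is precisely what sharpens the crude bound $c_p(\Chi\bar\Chi) \leq 2n\,c_p(\Chi)$ to $c_p \leq (2n-2)\,c_p(\Chi)$, and that the gain must be harvested filtration step by filtration step rather than globally. The resulting combinatorial inequality is tight — it is an equality exactly when $\dim V^{I_j} = n-1$ — so the exponent $n/(2n-2)$ is the best this method produces.
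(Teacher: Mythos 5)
Your proof is correct, and since the paper offers no proof of its own (it simply refers the reader to Odlyzko's Durham article for a proof ``from first principles''), the only meaningful comparison is with that source. Your argument captures the standard route: reduce to the local inequality $c_p(\Phi)\le(2n-2)\,c_p(\Chi)$, peel off the trivial constituent by writing $\operatorname{End}(V)=\C\cdot\operatorname{id}_V\oplus\mathfrak{sl}(V)$ so that $c_p(\Phi)=c_p(\Psi)$, expand both sides along the ramification filtration, and verify the per-group inequality $n^2-\dim\operatorname{End}_H(V)\le(2n-2)(n-\dim V^H)$, which correctly collapses to $(n-k-1)^2\ge 0$ after using $\dim\operatorname{End}_H(V)\ge k^2+1$ for $k=\dim V^H<n$. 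Two small points worth making explicit if this were to be written out in full: the vanishing of $\operatorname{Hom}_H(W,V^H)$ uses that $\dim(W^*)^H=\dim W^H$, which holds because $H$ is finite and $\chi_{W^*}=\overline{\chi_W}$; and the existence of the complement $W$ with $W^H=0$ uses complete reducibility for the finite group $H=I_j$. Neither affects correctness.
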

\noindent A proof of this lemma from first principles is given in \cite{odlyzko-durham}.

Combining Lemma~\ref{lowboundlem} with Lemma~\ref{lemma1} one gets 
the following immediate consequence

\begin{thm} The lower bound
\label{thm1}
\[
\delta_\Chi \geq M(n^2,r^2,w)^{n/(2n-2)}
\]
is valid for all characters $\Chi$ with 
$(\Chi(e),\Chi(\sigma),(\Chi,\overline{\Chi})) = (n,r,w)$ such that $L(|\Chi|^2,s)$ 
satisfies the Artin conjecture and the Riemann hypothesis.  
\end{thm}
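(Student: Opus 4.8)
The plan is to combine the two preceding lemmas essentially mechanically, with the only real content being bookkeeping of the three numerical invariants as they pass through the construction. First I would set $\Phi = |\Chi|^2 = \Chi \bbar{\Chi}$. This $\Phi$ is a character of $\G$ (the product of $\Chi$ with its complex conjugate character), it takes real values, and in fact it takes nonnegative values since $\Phi(g) = |\Chi(g)|^2 \geq 0$ for every $g \in \G$; so $\Phi$ is a nonnegative character in the sense of \eqref{nonnegativity}, and Lemma~\ref{lowboundlem} applies to it provided $L(\Phi,s) = L(|\Chi|^2,s)$ satisfies the Artin conjecture and the Riemann hypothesis, which is exactly the hypothesis of the theorem.

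Next I would compute the triple $(\Phi(e),\Phi(\sigma),(\Phi,1))$ in terms of $(n,r,w)$. Since $\Phi(g) = |\Chi(g)|^2$, we get $\Phi(e) = |\Chi(e)|^2 = n^2$ and $\Phi(\sigma) = |\Chi(\sigma)|^2 = r^2$ (here $r = \Chi(\sigma)$ is real, being an eigenvalue trace of an involution, so $r^2 = |r|^2$). For the third coordinate, $(\Phi,1) = (|\Chi|^2,1) = (\Chi\bbar\Chi,1) = (\Chi,\Chi) = (\Chi,\bbar\Chi) = w$ by the standard adjunction for the inner product (moving $\bbar\Chi$ across) — i.e.\ $(\Phi,1) = \frac{1}{|G|}\sum_g \Chi(g)\bbar{\Chi(g)} = w$. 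Thus $(\Phi(e),\Phi(\sigma),(\Phi,1)) = (n^2, r^2, w)$, and Lemma~\ref{lowboundlem} (with $u = w$) gives
\[
\delta_\Phi \geq M(n^2, r^2, w).
\]

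Finally I would feed this into Lemma~\ref{lemma1}. That lemma states precisely that $\delta_\Chi \geq \delta_\Phi^{n/(2n-2)}$ for a degree $n$ character $\Chi$ and its absolute square $\Phi = |\Chi|^2$; here $\Chi$ has degree $n$, so $\delta_\Chi \geq \delta_\Phi^{n/(2n-2)} \geq M(n^2,r^2,w)^{n/(2n-2)}$, using that $x \mapsto x^{n/(2n-2)}$ is monotone increasing for $n \geq 2$ (and the degree-$1$ case being trivial or vacuous in this framework). This is the claimed bound.

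The only potential subtlety — and hence the ``main obstacle,'' such as it is — is making sure the hypotheses line up: Lemma~\ref{lowboundlem} needs the Artin conjecture and Riemann hypothesis for $L(\Phi,s)$, and Lemma~\ref{lemma1} is an unconditional conductor identity between $\Chi$ and $\Phi$, so the combined statement is conditional only on the Artin conjecture and Riemann hypothesis for $L(|\Chi|^2,s)$, exactly as stated; no assumption on $L(\Chi,s)$ itself is needed. One should also note that $\Phi$ is nonzero (since $\Chi$ is), so $(\Phi,1) = w > 0$ and Lemma~\ref{lowboundlem} genuinely applies. I expect the whole argument to be three or four lines once these identities are recorded.
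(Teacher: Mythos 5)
Your argument is exactly the paper's: set $\Phi = |\Chi|^2$, observe it is a nonnegative character with $(\Phi(e),\Phi(\sigma),(\Phi,1)) = (n^2,r^2,w)$, apply Lemma~\ref{lowboundlem} to get $\delta_\Phi \geq M(n^2,r^2,w)$, and then feed this through the exponent $n/(2n-2)$ from Lemma~\ref{lemma1}. The paper states the theorem as an immediate consequence of those two lemmas without writing the details, and you have correctly supplied them; the only wrinkle is that the chain $(\Chi,\Chi) = (\Chi,\bbar\Chi)$ only makes sense if the second pairing is read as the bilinear (unconjugated) form the paper implicitly uses, but your explicit sum $\frac{1}{|G|}\sum_g \Chi(g)\bbar{\Chi(g)} = w$ is the correct identification either way.
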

This theorem 
is essentially the main result
in the literature on lower bounds for Artin conductors.  It appears in
\cite{odlyzko-durham, PM} with the right side replaced by 
explicit bounds.
For fixed $\epsilon \in [-1,1]$ and $w>0$, one has the asymptotic 
behavior
\begin{equation}
\label{asymptotic2}
\lim_{n \rightarrow \infty} M(n^2,\epsilon^2 n^2,w) = \Omega^{(1-\epsilon^2)/2} \Theta^{\epsilon^2/2} \approx 
6.6905^{1-\epsilon^2} 14.6742^{\epsilon^2}.
\end{equation}
The bases $\Omega \approx 44.7632$ and $\Theta \approx 215.3325$ of \eqref{asymptotic1} serve as limiting
lower bounds for root discriminants via Lemma~\ref{lowboundlem}.   However it is only their square roots
$\sqrt{\Omega} \approx 6.6905 $ and $\sqrt{\Theta} \approx 14.6742$
which Theorem~\ref{thm1} gives as limiting lower bounds for root conductors.  This
 discrepancy will be addressed in Section~\ref{asymp}.

\section{Type-based analytic lower bounds}
\label{Type}
     In this section we establish Theorem~\ref{thm2}, which is 
  a family of lower bounds on the root conductor
  $\delta_\Chi$ of a given Artin character, dependent
  on the choice of an auxiliary character $\phi$.

\subsection{Conductor relations}
Let $G$ be a finite group, $c$ an involution in $G$, $\chi$ a faithful
character of $G$, and $\phi$ 
a non-zero real-valued character of $G$.   
Say that a pair of  Artin characters 
 ($\Chi$,$\Phi$) has joint
type $(G,c,\chi,\phi)$ if there is a surjection $h : \G \rightarrow G$ with
$h(\sigma)=c$,  $\Chi = \chi \circ h$, and $\Phi = \phi \circ h$.  
  
Write the conductors respectively as
\begin{align*}
D_\Chi & = \prod_p p^{c_p(\Chi)}, & 
D_\Phi & = \prod_p p^{c_p(\Phi)}.
\end{align*}
Just as in the last section, we need lower bounds on $D_\Chi$ in terms of  
$D_{\Phi}$.   Our paper \cite{jr-tame-wild} produces bounds
of this sort in the context of many characters.
Here we present some of these results restricted to the setting
of  two characters, but otherwise following the notation of
\cite{jr-tame-wild}.

For $\tau \in G$, let 
$\bar{\tau}$ be its order. Let $\psi$ be a rational character of $G$.    
Define two similar numbers, 
\begin{align}
\label{twosimilar}
\what{c}_\tau(\psi)&= \psi(e)-\psi(\tau), &
c_\tau(\psi)& =  
 \psi(e) - \frac{1}{\bar{\tau}} 
\sum_{k|\bar{\tau}} \varphi(\bar\tau/k) \psi(\tau^k). 
\end{align}
Here $\varphi$ is the Euler totient function given by $\varphi(k) = |(\Z/k)^\times|$. 
For the identity element $e$, one clearly has
$\what{c}_e(\psi) = c_e(\psi)=0$.    
When $\bar{\tau}$ is prime, the functions on rational characters
defined in \eqref{twosimilar} are proportional:   
$(\bar{\tau}-1) \what{c}_\tau(\psi) =\bar{\tau}{c}_\tau(\psi)$.  

The functions $\what{c}_\tau$  and ${c}_\tau$ are related to ramification as follows.  
Let $\Psi$ be an Artin character corresponding to $\psi$ under $h$.   
If $\Psi$ is tame at $p$ then
\begin{equation}
\label{tameidentity}
c_p(\Psi) = c_\tau(\psi),
\end{equation}
for $\tau$ corresponding to a generator of tame inertia.  The 
identity \eqref{tameidentity}  holds because $c_\tau(\psi)$ is the
number of non-unital eigenvalues
of $\rho(\tau)$ for a representation $\rho$ with character $\psi$.  
For general $\Psi$, there is
a canonical expansion
\begin{equation}
\label{wildbound}
c_p(\Psi) = \sum_{\tau} k_\tau \what{c}_\tau(\psi),  
\end{equation}
with always $k_\tau \geq 0$,  
coming from the filtration by higher
ramification groups on the $p$-adic inertial subgroup of $G$.  

Because \eqref{twosimilar}--\eqref{wildbound} are 
only correct for $\psi$ rational, when we apply them to characters $\chi$ 
and $\phi$ of interest, we are always assuming that $\chi$ and $\phi$
are rational.   As explained in \S\ref{rat-chars}, the restriction to rational
characters still allows obtaining general lower bounds.  Also, as will 
be illustrated by an example in \S\ref{spectralwidth}, focusing on rational
characters does not reduce the quality of these lower bounds.  

For the lower bounds we need, we define the parallel quantities
\begin{align}
\label{alphaprod}
\what{\alpha}(G,\chi,\phi) & = \min_{\tau \in G-\{e\}} \frac{\what{c}_\tau(\chi)}{\what{c}_\tau(\phi)}, & 
\alpha(G,\chi,\phi) & = \min_{\tau \in G-\{e\}} \frac{c_\tau(\chi)}{c_\tau(\phi)}. 
\end{align}
Let $B(G,\chi,\phi)$ be the best lower bound, valid for all primes $p$, that one can make on $c_p(\Chi)/c_p(\Phi)$
by purely local arguments.   As emphasized in
\cite[\S2]{jr-tame-wild}, $B(G,\chi,\phi)$ can in   
principle be calculated by individually inspecting all possible 
$p$-adic ramification behaviors.  The above discussion says
\begin{equation}
\label{aba}
\what{\alpha}(G,\chi,\phi) \leq B(G,\chi,\phi) \leq \alpha(G,\chi,\phi).
\end{equation}
The left inequality holds because of the nonnegativity of the $k_\tau$ in
 \eqref{wildbound}.  The right inequality holds because of \eqref{tameidentity}.  

  A central theme of \cite{jr-tame-wild}
 is that one is often but not always 
 in the extreme situation 
  \begin{equation}
  \label{ba}
 B(G,\chi,\phi) = \alpha(G,\chi,\phi).
 \end{equation}
 For example, it often occurs in practice that 
 the minimum in the expression \eqref{alphaprod} for $\what{\alpha}(G,\chi,\phi)$
 occurs at a $\tau$ of prime order.  Then the proportionality
 remark above shows that in fact
 all three quantities in \eqref{aba} are 
 the same, and so in particular \eqref{ba} holds.   
  As a quite different example, Theorem~7.3 of \cite{jr-tame-wild} 
 says that if $\phi$ is the regular character of $G$ and $\chi$ is a
 permutation character, then
 \eqref{ba} holds.   Some other examples of \eqref{ba} are worked out in 
 \cite{jr-tame-wild} by explicit analysis of wild ramification;
 a few examples show that  $B(G,\chi,\phi) < \alpha(G,\chi,\phi)$ is possible too.

\subsection{Root conductor relations} To switch the focus from conductors to root conductors,
we multiply all three quantities in \eqref{aba} by $\phi(e)/\chi(e)$ to obtain 
\begin{equation}
\label{aba2}
\walp(G,\chi,\phi) \leq b(G,\chi,\phi) \leq \alp(G,\chi,\phi).
\end{equation}
Here the elementary purely group-theoretic quantity $\walp(G,\chi,\phi)$ is improved
to the best bound $b(G,\chi,\phi)$ which in turn often agrees
with a second more complicated but still purely group-theoretic quantity $\alp(G,\chi,\phi)$.
The notations $\what{\alpha}$, $\alpha$, $\what{\underline{\alpha}}$, 
$\underline{\alpha}$ are all taken from Section~7
of \cite{jr-tame-wild} while
the notations $B$ and $b$ correspond to quantities not named
in \cite{jr-tame-wild}.

Our discussion establishes the following lemma.  
\begin{lemma} 
\label{lemma2} 
The conductor relation
\begin{equation}
\label{maincondrel}
\delta_\Chi \geq \delta_\Phi^{b(G,\chi,\phi)} 
\end{equation}
holds for all pairs of Artin characters $(\Chi,\Phi)$ with joint type  
of the form  $(G,c,\chi,\phi)$. 
\end{lemma}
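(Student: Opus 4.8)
The plan is to deduce \eqref{maincondrel} directly from the uniform prime-by-prime bound that defines $b(G,\chi,\phi)$, using the product formula $D_\Chi=\prod_p p^{c_p(\Chi)}$ and a passage to logarithms; no new ramification theory is needed beyond what has already been summarized in \eqref{twosimilar}--\eqref{aba2}.

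First I would record the local input. By the very definition of $B(G,\chi,\phi)$ as the best lower bound on $c_p(\Chi)/c_p(\Phi)$ valid for all primes $p$ and all $p$-adic ramification behaviors compatible with the joint type, any pair $(\Chi,\Phi)$ of joint type $(G,c,\chi,\phi)$ satisfies
\[
c_p(\Chi) \ \geq\ B(G,\chi,\phi)\, c_p(\Phi)\qquad\text{for every prime }p.
\]
I would not re-derive this: it is exactly the assertion behind \eqref{aba}, whose left inequality rests on the nonnegativity of the $k_\tau$ in the wild expansion \eqref{wildbound} and whose right inequality rests on the tame identity \eqref{tameidentity}, and it is legitimate precisely because $\chi$ and $\phi$ are assumed rational so that \eqref{twosimilar}--\eqref{wildbound} apply. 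Note also $B(G,\chi,\phi)\geq 0$, each ratio being a quotient of nonnegative conductor exponents.

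Next I would multiply the displayed inequality at each $p$ by $\log p>0$ and sum over all primes (only finitely many terms are nonzero), obtaining
\[
\log D_\Chi \ =\ \sum_p c_p(\Chi)\log p \ \geq\ B(G,\chi,\phi)\sum_p c_p(\Phi)\log p \ =\ B(G,\chi,\phi)\,\log D_\Phi .
\]
Substituting $D_\Chi=\delta_\Chi^{\chi(e)}$ and $D_\Phi=\delta_\Phi^{\phi(e)}$ and dividing by $\chi(e)>0$ turns this into
\[
\log \delta_\Chi \ \geq\ \frac{B(G,\chi,\phi)\,\phi(e)}{\chi(e)}\,\log\delta_\Phi \ =\ b(G,\chi,\phi)\,\log\delta_\Phi ,
\]
the last equality being the definition of $b$ recorded just after \eqref{aba2}. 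Since $D_\Phi=\prod_p p^{c_p(\Phi)}$ has nonnegative exponents we have $\delta_\Phi\geq 1$, and $b(G,\chi,\phi)\geq 0$, so exponentiating preserves the inequality and yields $\delta_\Chi\geq\delta_\Phi^{\,b(G,\chi,\phi)}$, which is \eqref{maincondrel}.

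I expect essentially no obstacle in carrying this out, since the lemma is bookkeeping: all the substantive work --- the local analysis of wild ramification that pins down $B(G,\chi,\phi)$, and in favorable cases its identification with the purely group-theoretic $\alpha(G,\chi,\phi)$ of \eqref{alphaprod} via the proportionality remark for elements of prime order --- has already been packaged into \eqref{aba}--\eqref{aba2} and imported from \cite{jr-tame-wild}. The one point to keep straight is logical rather than computational: the bound $c_p(\Chi)\geq B(G,\chi,\phi)\,c_p(\Phi)$ is needed uniformly in $p$ for the particular pair $(\Chi,\Phi)$ realizing $(G,c,\chi,\phi)$, which is why $B(G,\chi,\phi)$ must be defined by optimizing over all local possibilities rather than by inspecting any single prime, and also why the placement $c$ of complex conjugation --- invisible at finite primes --- does not enter the argument.
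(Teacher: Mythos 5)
Your proof is correct and follows the same path the paper intends: the paper simply says ``Our discussion establishes the following lemma,'' leaving the reader to do exactly what you did — multiply the uniform local bound $c_p(\Chi)\geq B(G,\chi,\phi)\,c_p(\Phi)$ by $\log p$, sum over primes, pass from $D$ to $\delta$ to convert $B$ into $b$, and exponentiate. Your explicit care with $\delta_\Phi\geq 1$ and $b\geq 0$ when exponentiating, and your remark that $B$ must be defined uniformly in $p$ while $c$ plays no role at finite primes, are the right points to flag; there is no gap.
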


\subsection{Bounds via an auxiliary Artin character $\Phi$.} 
\label{bounds}
For $u \in \{\walp,b,\alp\}$, define
\begin{equation}
\label{mdef}
m(G,c,\chi,\phi,u) = M(\phi(e),\phi(c),(\phi,1))^{u(G,\chi,\phi)}.
\end{equation}
Just like Lemma~\ref{lowboundlem} combined with Lemma~\ref{lemma1} to give Theorem~\ref{thm1},
so too Lemma~\ref{lowboundlem} combines with Lemma~\ref{lemma2} to give 
the following theorem.
\begin{thm} \label{thm2}  The lower bound 
\begin{equation}
\label{thm2bound}
\delta_\Chi \geq m(G,c,\chi,\phi,b)
\end{equation}
is valid for all character pairs $(\Chi,\Phi)$ of joint type $(G,c,\chi,\phi)$ 
such that $\Phi$ is non-negative and $L(\Phi,s)$ satisfies the Artin conjecture and the Riemann hypothesis.
\end{thm}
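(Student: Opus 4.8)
The plan is to run the same two-step pattern that produced Theorem~\ref{thm1}, but with the classical conductor relation (Lemma~\ref{lemma1}) replaced by the type-based conductor relation (Lemma~\ref{lemma2}). Fix a surjection $h : \G \to G$ realizing the joint type, so that $h(\sigma) = c$, $\Chi = \chi \circ h$, and $\Phi = \phi \circ h$.

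First I would invoke Lemma~\ref{lemma2} directly. Its only hypothesis is that $(\Chi,\Phi)$ has joint type of the form $(G,c,\chi,\phi)$, so with no analytic input it gives
\[
\delta_\Chi \geq \delta_\Phi^{\,b(G,\chi,\phi)}.
\]
I would then record that the exponent is non-negative. By \eqref{aba2} it suffices to see $\walp(G,\chi,\phi) \geq 0$, and since $\walp(G,\chi,\phi)$ is the positive multiple $\phi(e)/\chi(e)$ of $\what{\alpha}(G,\chi,\phi) = \min_{\tau \neq e} \what{c}_\tau(\chi)/\what{c}_\tau(\phi)$, this reduces to the observation that $\what{c}_\tau(\psi) = \psi(e)-\psi(\tau) \geq 0$ for every (rational) character $\psi$, applied to $\psi = \chi$ and $\psi = \phi$; faithfulness of $\chi$ makes each numerator strictly positive, so each ratio lies in $(0,\infty]$ and the minimum is a well-defined nonnegative number.

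Next I would feed $\Phi$ into Lemma~\ref{lowboundlem}. Because $h$ is surjective, the three invariants entering that lemma are computed on the finite group: $\Phi(e) = \phi(e)$, $\Phi(\sigma) = \phi(h(\sigma)) = \phi(c)$, and $(\Phi,1) = (\phi,1)$. The standing hypothesis that $\Phi$ is non-negative is, via surjectivity of $h$, exactly the condition $\phi(g) \geq 0$ for all $g \in G$, which by the remark after \eqref{nonnegativity} forces $(\phi,1) > 0$; hence $(\phi(e),\phi(c),(\phi,1))$ is a legitimate argument of $M$. Using the assumed Artin conjecture and Riemann hypothesis for $L(\Phi,s)$, Lemma~\ref{lowboundlem} gives $\delta_\Phi \geq M(\phi(e),\phi(c),(\phi,1))$. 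Finally I would combine the two displays: since $\delta_\Phi \geq M(\phi(e),\phi(c),(\phi,1)) > 0$ and $b(G,\chi,\phi) \geq 0$, the map $t \mapsto t^{\,b(G,\chi,\phi)}$ is monotone non-decreasing on the positive reals, so
\[
\delta_\Chi \geq \delta_\Phi^{\,b(G,\chi,\phi)} \geq M(\phi(e),\phi(c),(\phi,1))^{\,b(G,\chi,\phi)} = m(G,c,\chi,\phi,b),
\]
using \eqref{mdef} for the last equality. This is \eqref{thm2bound}.

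I do not expect any genuine obstacle: once Lemmas~\ref{lowboundlem} and \ref{lemma2} are in hand the argument is a formality, and all the real content sits in those lemmas — and, behind Lemma~\ref{lemma2}, in the ramification-theoretic bounds of \cite{jr-tame-wild} that are packaged into $b(G,\chi,\phi)$. The only points worth flagging are bookkeeping ones: invoking monotonicity of $t \mapsto t^{\,b}$ only after checking $b(G,\chi,\phi) \geq 0$, and noting that — in contrast with the permutation-character case underlying the classical bounds, where non-negativity is automatic — here non-negativity of $\Phi$ is a genuine hypothesis on the auxiliary character $\phi$, and is precisely what licenses the use of Lemma~\ref{lowboundlem} and guarantees $(\phi,1) > 0$.
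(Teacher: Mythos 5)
Your proof is correct and mirrors the paper's own argument exactly: the paper simply states that Theorem~\ref{thm2} follows by combining Lemma~\ref{lowboundlem} with Lemma~\ref{lemma2} in the same way Lemma~\ref{lowboundlem} combined with Lemma~\ref{lemma1} to give Theorem~\ref{thm1}. You have merely spelled out the bookkeeping the paper leaves implicit (the translation of invariants through the surjection $h$, the positivity of $(\phi,1)$, and the non-negativity of the exponent $b(G,\chi,\phi)$ that licenses the monotonicity step), all of which are the right points to check.
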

Computing the right side of \eqref{thm2bound} is difficult because the 
base in \eqref{mdef} requires evaluating the maximum of a complicated
function, while the exponent $b(G,\chi,\phi)$ involves an exhaustive
study of wild ramification.  Almost always in the sequel, $\chi$ and $\phi$ are 
rational-valued and we replace $b(G,\chi,\phi)$ by $\walp(G,\chi,\phi)$; in the common case
that all three quantities of \eqref{aba2} are equal, this is no loss.

\section{Four choices for $\phi$} 
\label{choices}
This section fixes a type $(G,c,\chi)$ where the faithful character $\chi$ 
is rational-valued and uses the notation
$(n,r) = (\chi(e),\chi(c))$.  The section introduces four nonnegative characters $\phi_i$ built from 
$(G,\chi)$.  For the first character $\phi_L$, it makes $m(G,c,\chi,\phi_L,b)$, the lower
bound appearing in Theorem~\ref{thm2}, more explicit.  For the
remaining three characters $\phi_i$, it makes the perhaps smaller quantity
$m(G,c,\chi,\phi_i,\walp)$ more explicit.

    Two simple quantities enter into the constructions as follows.
Let $X$ be the set of 
values of $\chi$, so that $X \subset \Z$ by our rationality
assumption.  Let $-\widecheck{\chi}$ be the least element of $X$.  The greatest element of
$X$ is of course $\chi(e)=n$, and we let $\widehat{\chi}$ be the second greatest element.  
Thus, $-\widecheck{\chi}  <0  \leq \widehat{\chi} \leq n-1$.  

\subsection{Linear auxiliary character}  
A simple nonnegative character associated to $\chi$ is 
$\phi_L = \chi+\widecheck{\chi}$.    Both
$m(G,c,\chi,\phi_L,\walp)$ and $m(G,c,\chi,\phi_L,\alp)$ 
easily evaluate to 
\begin{equation}
\label{formlinear}
m(G,c,\chi,\phi_L,b) =  M(n+\widecheck{\chi},r+\widecheck{\chi},\widecheck{\chi})^{(n + \widecheck{\chi})/n}.
\end{equation}
The character $\phi_L$ seems most promising as an auxiliary character 
when $\widecheck{\chi}$ is very small.  

In \cite[\S3]{PM}
the auxiliary character $\chi+n$ is used, which has
the advantage of being nonnegative for any rational
character $\chi$.  Odlyzko also uses $\chi+n$ in
\cite{odlyzko-durham}, and suggests using the auxiliary character
$\phi_L = \chi+\widecheck{\chi}$ since it 
gives a better bound whenever
$\widecheck{\chi}<n$.  
This strict inequality holds exactly when the center of $G$ has odd
order.

\subsection{Square auxiliary character} 
Another very simple nonnegative character
associated to $\chi$ is $\phi_S = \chi^2$.  This character gives  
\begin{equation}
\label{formsquare}
m(G,c,\chi,\phi_S,\walp) = M(n^2,r^2,(\chi,\chi))^{n/(n+\widehat{\chi})}.
\end{equation}
The derivation of \eqref{formsquare} uses the simple formula in
\eqref{twosimilar} for $\what{c}_\tau$.
The formula for $c_\tau$ in \eqref{twosimilar} is more complicated, and we
do not expect a simple general formula for $m(G,c,\chi,\phi_S,{\alp})$,
nor for the
best bound $m(G,c,\chi,\phi_S,b)$ in Theorem~\ref{thm2}.  

The character $\phi_S$ is used prominently in \cite{odlyzko-durham, PM}.
When $\widehat{\chi}=n-2$,
the lower bound $m(G,c,\chi,\phi_S,\walp)$ coincides with that of
Lemma~\ref{lemma1}.
Thus for $\widehat{\chi}=n-2$,  
Theorem~\ref{thm2} with $\phi = \phi_S$ gives the same bound
as Theorem~\ref{thm1}.  
On the other hand, as soon as $\widehat{\chi}<n-2$, Theorem~\ref{thm2} 
with $\phi = \phi_S$ is stronger. 
The remaining case $\widehat{\chi}=n-1$ occurs only three times among
the $195$ characters we consider in Section~\ref{Tables}.  In these 
three cases, the bound in Theorem~\ref{thm1} is stronger because the exponent 
is larger.
However, in each of these cases, the tame-wild principle applies 
\cite{jr-tame-wild} and we can use 
exponent $m(G,c,\chi,\phi_S,{\alp})$, which gives the same bound
as Theorem~\ref{thm1} in two cases, and a better bound in the third.

\subsection{Quadratic auxiliary character}  Let $-\widetilde{\chi}$ be the greatest negative element of the
set $X$ of values of $\chi$.   A modification of the given character $\chi$ is $\chi^* = \chi+\widetilde{\chi}$,
with degree $n^* = n+\widetilde{\chi}$ and signature $r^* = r + \widetilde{\chi}$.   A modification of 
$\phi_S$ is $\phi_Q = \chi \chi^*$.    The function $\phi_Q$ takes only nonnegative values
because the interval $(-\tilde{\chi},0)$ in the $x$-line where $x(x+\tilde{\chi})$ is negative is disjoint
from the set $X$ of values of $\chi$.   The lower bound associated to $\phi_Q$ is 
\begin{equation}
\label{formquad}
m(G,c,\chi,\phi_Q,\walp) =  M(nn^*,rr^*,(\chi,\chi))^{(n^*)/(n^*+\widehat{\chi})}.
\end{equation}
Comparing formulas \eqref{formsquare} and \eqref{formquad}, $n^2$ strictly increases to $nn^*$ and 
$n/(n+\widehat{\chi})$ increases to $n^*/(n^* + \widehat{\chi})$.   
In the totally real case $n=r$, the monotonicity of the function $M(n,n)$ as exhibited in the right edge of Figure~\ref{contourM}
then implies that $m(G,c,\chi,\phi_S,\walp)$
strictly increases to $m(G,c,\chi,\phi_Q,\walp)$.   Even outside the totally 
real setting, one can expect that $\phi_Q$ almost always yields a
better lower bound than $\phi_S$.   
The character $\phi_Q$ seems promising as an auxiliary character when
$\widehat{\chi}$ is very small so that the
exponent is near $1$ rather than its lower limit of $1/2$.  
As for the square case, we do not expect a simple
formula for the best bound $m(G,c,\chi,\phi_Q,b)$ in Theorem~\ref{thm2}.

\subsection{Galois auxiliary character}   Finally there is a strong candidate for
a good auxiliary character that does not depend on $\chi$, 
namely the regular character $\phi_G$.  By definition,
$\phi_G(e) = |G|$ and else $\phi_G(g)=0$.   In this case one has
\begin{equation}
\label{formula3a}
m(G,c,\chi,\phi_G,\walp) = M(|G|,\delta_{ce}|G|,1)^{(n-\widehat{\chi})/n}.
\end{equation}
Here $\delta_{ce}$ is defined to be $1$ in the totally real case and $0$ otherwise.
This auxiliary character again seems most promising when $\widehat{\chi}$ is small. 
As in the square and quadratic cases, we do not expect a simple formula 
for $m(G,c,\chi,\phi_G,b)$.

\subsection{Spectral bounds and rationality} 
\label{spectralwidth}  To get large lower bounds on root conductors,
one wants $\widecheck{\chi}/n$ to be small for \eqref{formlinear} or
$\widehat{\chi}/n$ to be small for
\eqref{formsquare}--\eqref{formula3a}.   The analogous quantities
$\widecheck{\chi}_1/n_1$ 
 and $\widehat{\chi}/n_1$ are 
well-defined for a general real character $\chi_1$, and replacing  
$\chi_1$ by the sum $\chi$ of its conjugates can
substantially reduce them.  

For example, let $p$ be a prime
congruent to 1 modulo 4, and let $G$ be the simple group $\PSL_2(p)$.
Then $G$ has two irrational irreducible characters, say $\chi_1$ and $\chi_2$, 
both of degree $(p+1)/2$.  For each, its set of values is 
$$\left\{\frac{-\sqrt{p}-1}{2},-1,0,1,\frac{\sqrt{p}-1}{2},
\frac{p+1}{2}\right\}$$
(except that 
$1$ is missing if $p=5$).   However 
for $\chi = \chi_1+\chi_2$, the set of values is just 
$\{-2,0,2,p+1\}$.     Thus in passing from $\widecheck{\chi}_1/n_1$ to
$\widecheck{\chi}/n$, one saves a factor of $\sqrt{p}+1$.  Similarly
in passing from $\widehat{\chi}_1/n_1$ to
$\widehat{\chi}/n$, one saves a factor of $\sqrt{p}-1$.

\section{Other choices for $\phi$} 
\label{otherchoices} To apply Theorem~\ref{thm2}
for a given Galois type $(G,c,\chi)$, one needs to choose an auxiliary character
$\phi$.   We presented four choices in Section~\ref{choices}.   We discuss
all possible  choices here, using $G=A_4$ and $G=A_5$ as illustrative 
examples.  

\subsection{Rational character tables} 
As a preliminary, we review the notion of rational character table.
Let $G^\sharp = \{C_j\}_{j \in J}$ be the set of 
power-conjugacy classes in $G$.  Let $G^{\rm rat} = \{\chi_i\}_{i \in I}$ be the set of rationally
irreducible characters.   These sets have the same size $k$ and one
has a $k\times k$ matrix
$\chi_i(C_j)$, called the rational character table.  

\begin{table}[htb]
\[
\begin{array}{c|rrr  c  c|rrrr}
  A_4         &1A & 2A & 3AB  & \;\;\;\;\;\;\;\;\;\;\;\;\;\;\;\;\;\;  & A_5 &  1A & 2A & 3A & 5AB \\
\cline{1-4} \cline{6-10}
\chi_1 & 1 & 1 & 1 &  & \chi_1 & 1 & 1 & 1 & 1 \\
\chi_2 & 2 & 2  & -1 &     &\chi_4 & 4 & 0 & 1 & - 1 \\
\chi_3 & 3 & - 1& 0 && \chi_5 & 5 & 1 & -1 & 0 \\
          \multicolumn{2}{c}{\;}        &      &&&\chi_6 & 6 & -2 & 0 & 1  \\
\end{array}
\]
\caption{\label{ratchartables} Rational character tables for $A_4$ and $A_5$} 
 \end{table}

Two examples are given in Table~\ref{ratchartables}.
 We index characters by their degree, with
$I = \{1,2,3\}$ for $A_4$ and $I = \{1,4,5,6\}$ for $A_5$.     All characters are absolutely irreducible except
for $\chi_2$ and $\chi_{6}$, which each break as a sum of two conjugate irreducible complex characters.
We likewise index power-conjugacy classes by the order of a representing element, always adding
letters as is traditional.  Thus $J = \{1A,2A,3AB\}$ for $A_4$ and  $J = \{1A,2A,3A,5AB\}$ for
$A_5$, with $3AB$ and $5AB$ each consisting of two conjugacy classes.

\subsection{The polytope $P_G$ of normalized nonnegative characters} 
\label{polytope-pg}
A general real-valued function $\phi \in \R(G^\sharp)$ has
an expansion $\sum x_i \chi_i$ with $x_i \in \R$.  The coefficients
are recovered via inner products, $x_i =
(\phi,\chi_i)/(\chi_i,\chi_i)$.  Alternative coordinates are given by
$y_j = \phi(C_j)$.  The  $\phi$ allowed for Theorem~\ref{thm2} are the
nonzero $\phi$ 
with the $x_i$ and $y_j$ non-negative integers.

An allowed $\phi$ gives the same lower bound  in Theorem~\ref{thm2} as
any of its positive multiples $m \phi$.  Without getting any new bounds, we can therefore give ourselves the convenience 
of allowing the $x_i$ and $y_j$ to be nonnegative rational numbers.    
Similarly, we can extend by continuity to allow the $x_i$ and $y_j$ to be nonnegative real numbers. 
The allowed $\phi$ then become the cone in $k$-dimensional Euclidean space given by 
$x_i \geq 0$ and $y_j \geq 0$, excluding the tip of the cone
at the origin.  

Writing the identity character as $\chi_1$, we can normalize via scaling to $x_1=1$.  Writing
the identity class as $C_{1A}$, the inequality $y_{1A} \geq 0$ is implied by
the other $y_j \geq 0$ and so the variable $y_{1A}$ can be ignored.  The polytope $P_G$ of
normalized nonnegative characters is then defined by $x_1=1$, the
inequalities $x_i \geq 0$ for $i \neq 1$, and inequalities $y_j \geq 0$ for $j \neq 1A$.
The point where all the  $x_i$ are zero 
is the unital character $\phi_1$.  The point where all the $y_j$ are zero
is the regular character $\phi_G$.    Thus the $(k-1)$-dimensional polytope
$P_G$ is determined by $2k-2$ linear inequalities,
with $k-1$ corresponding to non-unital characters and intersecting at $\phi_1$,
and $k-1$ corresponding to non-identity classes and intersecting at $\phi_G$.  

\begin{table}[htb]
\centering
\includegraphics{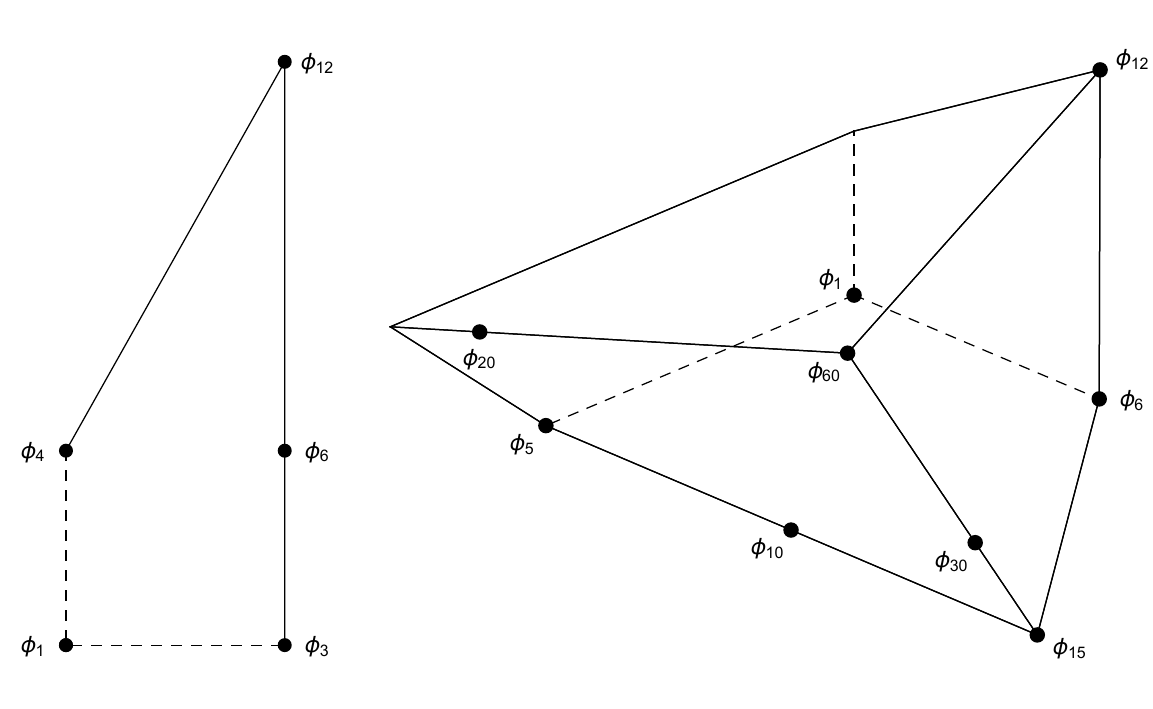}
\caption{\label{twopolytopes} The polytopes $P_{A_4}$ and $P_{A_5}$}
\end{table}

Figure~\ref{twopolytopes} continues our two examples.  On the left, $P_{A_4}$ is drawn in
the $x_2$-$x_3$ plane.  The character faces give the coordinate axes and are dashed.
The class faces are calculated from columns in the rational character table and are solid.  
   On the right, a view of $P_{A_5}$ is given in 
$x_{4}$-$x_{5}$-$x_{6}$ space.  The three pairwise intersections of character faces
give coordinate axes and 
are dashed, while all other edges are solid.   
In this view, the point $\phi_G = \phi_{60} = (4,5,3)$ 
should be considered as closest to the reader, with the solid lines visible
and the dashed lines hidden by the polytope.  Note that $P_{A_4}$ has the
combinatorics of a square and $P_{A_5}$ has the combinatorics of
a cube.    While the general $P_G$ is the intersection of an orthant with
tip $\phi_1$ and an orthant with tip $\phi_G$, its combinatorics are typically 
more complicated than $[0,1]^{(k-1)}$.  For example, the groups  $G=A_6$, $S_5$, $A_7$, and  $S_6$,
have $k=6$, $7$, $8$, and $11$ respectively; but instead of having $32$, $64$, $128$ and $1024$ vertices, their polytopes $P_G$ 
have $28$,
$40$, $115$, and $596$ vertices respectively.  

\subsection{Points in $P_G$}   
In the previous subsection, we have mentioned already the 
distinguished vertices $\phi_1$ and $\phi_G$.  For every rationally
irreducible character,
we also have $\phi_{\chi,L} = \chi + \widecheck{\chi}$, 
$\phi_{\chi,S} = \chi^2$, and $\phi_{\chi,Q} = \chi \chi^*$,
as in Section~\ref{choices}.    

For every subgroup $H$ of $G$, another element of $P_G$ is the permutation
character $\phi_{G/H}$.   For $H = G$, this character is just the $\phi_1$ considered
before, which is a vertex.  Otherwise, a theorem of Jordan, discussed at 
length in \cite{Ser03}, says that $\phi_{G/H}(C_j)=0$ 
for at least one $j$; in other words, $\phi_{G/H}$ is on at least one character face.  
For $A_4$ and $A_5$, there are respectively five and nine conjugacy classes of subgroups,
distinguished by their orders.  Figures~\ref{twopolytopes} draws the corresponding points, labeled by
 $\phi_{|G/H|}$.  All four vertices of $P_{A_4}$ and six of the eight vertices of $P_{A_5}$ are of the form $\phi_{N}$.   The remaining one $\phi_N$ in $P_{A_4}$ is on an edge, while the remaining three $\phi_N$ in $P_{A_5}$
 are on edges as well.

 \subsection{The best choice for $\phi$}  
Given $(G,c,\chi)$ and $u \in \{\walp,b,\alp\}$, 
let $m(G,c,\chi,u) = \max_{\phi \in P_G} m(G,c,\chi,\phi,u)$.  Computing these maxima seems difficult.  
Instead we vary $\phi$ over a modestly large finite set, denoting the 
largest bound appearing as $\mathfrak{d}(G,c,\chi,u)$. For most $G$, the set of $\phi$ we inspect consists of all $\phi_{\chi,L}$, $\phi_{\chi,S}$,
and $\phi_{\chi,Q}$, all $\phi_{G/H}$ including the regular character $\phi_G$, and
all vertices.  For some $G$, like $S_7$, there are too many vertices and we exclude them 
from the list of $\phi$ we try.  

For each $(G,\chi)$, we work either with $u=\walp$ or with $u=\alp$, as explained in 
the ``middle four columns" part of \S\ref{remainingrows}.  We then report $\mathfrak{d}(G,\chi) = \min_c \mathfrak{d}(G,c,\chi,u)$ 
in Section~\ref{Tables}.  
 
\section{The case $G=S_5$} 
\label{S5}
       Our focus in the next two 
sections is on finding initial segments
 $\cL(G,\chi; B)$ of complete lists of Artin $L$-functions, and
 in particular on finding the first root conductor $\delta_1(G,\chi)$.
It is a question of transferring  completeness 
statements for number fields to completeness statements for Artin 
$L$-functions via conductor relations.  In this section, we explain the process by
presenting the case $G=S_5$ in some detail.

\subsection{Different orders on the same set of fields}
     Consider the set $\cK$ of isomorphism classes of 
     quintic fields $K$ over $\Q$ with splitting field
 $L/\Q$ having Galois group $\gal(L/\Q) \cong S_5$.
 The group 
 $S_5$ has seven irreducible characters which we index by 
 degree and an auxiliary label: $\chi_{1a} = 1$, $\chi_{1b}$,
 $\chi_{4a}$, $\chi_{4b}$, $\chi_{5a}$, $\chi_{5b}$, and $\chi_{6a}$.  
  For $\phi$ a permutation character, let $D_\phi(K) = D(K_\phi)$ be the absolute
 discriminant of the associated resolvent algebra $K_\phi$ of $K$.   
 Extending by multiplicativity, functions $D_\chi : \cK \rightarrow \R_{>0}$ 
 are defined for general $\chi = \sum m_n \chi_n$.  They do not depend on the
 coefficient $m_{1a}$.    We follow our practice of often shifting attention to
 the corresponding root conductors $\delta_\chi(K) = D_\chi(K)^{1/\chi(e)}$.
 
\begin{table}[htb]
{\renewcommand{\arraycolsep}{3pt}
\[
\begin{array}{r|rrrrrrr|rrrrrrr|rr}
\lambda_5 & 1^5 & 2^2 1 & 31^2 & 5 & 21^3 & 41 & 32 &  1^5 & 2^2 1 & 31^2 & 5 & 21^3 & 41 & 32 & \\
\lambda_6 & 1^6 & 2^2 1^2 & 33 & 51 & 2^3 & 41^2 & 6 &  1^6 & \!\! 2^2 1^2 & 33 & 51 & 2^3 & 41^2 & 6 &
\walp(n) & \alp(n)  \\
\hline
\chi_{1a} &                    1 & 1 & 1 & 1 & 1 & 1 & 1 &    0 & 0 & 0 & 0 & 0 & 0 & 0 & \\
\chi_{1b} &                   1 & 1 & 1 & 1 & -1 & -1 & -1 &  0 & 0 & 0 & 0 & 1 & 1 & 1 & \\
\chi_{4a} &                    4 & 0 & 1 & -1 & {\bf 2} & 0 & -1 &  0 & 2 & 2 & 4 & {\bf 1} & 3 & 3 & 0.50 & 0.50  \\
\chi_{4b} &                    4 & 0 & {\bf 1} & -1 & -2 & 0 & {\bf 1} & 0 & 2 & {\bf 2} & 4 & 3 & 3 & 3 & 0.75 & 0.75  \\
\chi_{5a} &                    5 & {\bf 1} & -1 & 0 & {\bf 1} & -1 & {\bf 1} & 0 & {\bf 2} & 4 & 4 & {\bf 2} & 4 & 4 & 0.80 & 0.80 \\
\chi_{5b} &                    5 & {\bf 1} & -1 & 0 & -1 & {\bf 1} & -1 &         0 & {\bf 2} & 4 & 4 & 3 & 3 & 5 & 0.80 & 0.80  \\
\chi_{6a} &                    6 & -2 & 0 & {\bf 1} & 0 & 0 & 0 & 0 & 4 & 4 & {\bf 4} & 3 & 5 & 5 & 0.8\overline{3} & 0.8\overline{3} \\
\hline
\phi_{120} &                 120 & 0 & 0 & 0 & 0 & 0 & 0 & 0 &  60 &  80 & 96 & 60 & 90 & 100 & &  \\
 \end{array}
\]
}
\caption{\label{chartabs5} Standard character table of $S_5$ on the left, 
with entries $\chi_n(\tau)$;  tame table \cite[\S4.3]{jr-tame-wild},
on the right, with entries $c_\tau(\chi_n)$ as defined in \eqref{twosimilar}. } 
\end{table}  

Let $\cK(\chi; B) = \{K \in \cK : \delta_\chi(K) \leq B\}$.  Suppose now all 
 the $m_{n}$ are nonnegative with at least one coefficient
 besides $m_{1a}$  and $m_{1b}$ positive.  Then $\delta_\chi$ is a 
 height function in the sense that all the $\cK(\chi; B)$ 
 are finite.   Suppressing the secondary phenomenon 
 that ties among a finite number of fields can occur,
 we think of each $\delta_\chi$ as giving an ordering on 
 the set $\cK$.   
  
 The orderings coming from different $\delta_\chi$ can be very different.  For example, consider
 the field $K \in \cK$ defined by the polynomial $x^5 - 2x^4 + 4x^3 - 4x^2 + 2x - 4$.
 This field is the first field in $\cK$ when ordered by the regular character
 $\phi_{120} = \sum_n \chi_n(n) \chi_n$.   However it is 
 the $22^{\rm nd}$ field when
 ordered by $\phi_6  = 1 + \chi_{5b}$
 only the $2298^{\rm th}$ field when ordered by 
 $\phi_5 = 1+ \chi_{4a}$.
  
 This phenomenon of different orderings on the same set of number fields plays
 a prominent role in asymptotic studies \cite{wood}.  Here we are interested instead
 in initial segments and how they depend on $\chi$.  Our formalism lets
 us treat any $\chi$.  Following the conventions for general $G$ of the next section, 
 we focus on the five irreducible $\chi$ with
 $\chi(e)>1$, thus $\chi_n$ for $n \in \{4a,4b,5a,5b,6a\}$.   
 
\subsection{Computing Artin conductors} 
To compute general $D_\chi(K)$, one needs to work with enough resolvents of 
 $K=K_5 = \Q[x]/f_5(x)$.  For starters, we have the quadratic resolvent $K_2 = \Q[x]/(x^2-D(K_5))$ and the
 Cayley-Weber resolvent $K_6 = \Q[x]/f_6(x)$ \cite{generic,jr-tame-wild}.  The other resolvents 
 we will need are $K_{10} = K_5 \otimes K_2$, 
 $K_{12} = K_2 \otimes K_6$, and $K_{30} = K_5 \otimes K_6$.  Defining
 polynomials are obtained for $K_a \otimes K_b$ by the general formula 
\[
f_{ab}(x) = \prod_{i=1}^a \prod_{j=1}^b (x-\alpha_i-\beta_j),
\]
where $f_a(x)$ has roots $\alpha_i$ and $f_b(x)$ has roots $\beta_j$.   
So discriminants $D_2$, $D_5$, $D_6$, $D_{10}$, $D_{12}$, 
$D_{30}$ are easily computed.    
 
From the character table, the permutation characters $\phi_N$ in question are expressed
in the basis $\chi_n$ as on the left in the following display.  Inverting, one
gets the $\chi_n$ in terms of the $\phi_N$ as on the right.
\[
\begin{array}{r@{\:}l@{\:}l@{\qquad}r@{\:}l}
  \phi_2 &&  = 1 + \chi_{1b}, &\chi_{1b} & =  -1 + \phi_2,\\
  \phi_5 &&  = 1 + \chi_{4a},  & \chi_{4a} & =  -1 + \phi_{5}, \\
  \phi_6 && =  1 + \chi_{5b}, & \chi_{4a} & = 1 - \phi_2 - \phi_{5} + \phi_{10}, \\
 \phi_{10} & =  \phi_{5} \phi_2 & = 1 + \chi_{1b}  + \chi_{4a} + \chi_{4b}, & 
 \chi_{5a} & =   1 - \phi_2 - \phi_6 + \phi_{12},   \\
 \phi_{12} & =  \phi_6 \phi_2 & = 1 + \chi_{1b} + \chi_{5a} + \chi_{5b}, & \chi_{5b} & =  -1 + \phi_6,   \\
 \phi_{30} &  = \phi_5 \phi_6 & = 1 + 2 \chi_{4a} + 2 \chi_{5a} + \chi_{5b} + \chi_{6a}, \!\!\!\!  & \chi_{6a}   & =   2 \phi_2 - 2 \phi_5 + \phi_6 - 2 \phi_{12} + \phi_{30}. 
\end{array}
\]
Conductors $D_n$ belonging to the $\chi_n$ are calculable through these formulas, as e.g.\
$D_{6a}   =  D_2^2 D_5^{-2} D_6 D_{12}^{-2} D_{30}$.    

For all the groups $G$ considered in the next section, we proceeded
similarly.  Thus we started with rational character tables from {\em
  Magma}.  We used linear algebra to express rationally irreducible
characters in terms of permutation characters.  We used {\em Magma}
again to compute resolvents and then {\em Pari} to evaluate their
discriminants.  In this last step, we often confronted large degree
polynomials with large coefficients.  The discriminant computation was
only feasible because we knew {\em a priori} the set of primes
dividing the discriminant, and could then easily compute the $p$-parts
of the discriminants of these resolvent fields for relevant primes $p$
using {\em Pari/gp} without fully factoring the discriminants of the
resolvent polynomials.

{\em Magma}'s Artin representation package computes conductors of
Artin representations in a different and more local manner.
Presently, it does not compute all conductors in our range because
some decomposition groups are too large.

\subsection{Transferring completeness results.}
As an initial complete list
of fields, we take $\cK(\phi ; 85)$ with $\phi=\phi_G=\phi_{120}$.
We know from \cite{jr-global-database}
that this set consists of $2080$ fields.
We list these fields by increasing discriminant,  $K^1$, \dots, $K^{2080}$, with
the resolution of ties conveniently not affecting the explicit results appearing in 
Table~\ref{tablelabel1}. 

The quantities of Section~\ref{Type} reappear here, and we will use the abbreviations 
$\walp(n) = \walp(S_5,\chi_n,\phi)$ and 
$\alp(n) =  \alp(S_5,\chi_n,\phi)$.
Since $\phi$ is zero outside of the identity class, the formulas
simplify substantially:
\begin{align*}
\walp(n) & =\frac{\phi(e)}{\chi_n(e)}  \min_\tau \frac{\chi_n(e)-\chi_n(\tau)}{\phi(e)-\phi(\tau)} =  1-\max_{\tau}  \frac{\chi_n(\tau)}{n},  \\
\alp(n)& =   \frac{\phi(e)}{\chi_n(e)}   \min_{\tau} \frac{c_\tau(\chi_{n})}{c_\tau(\phi)}  = \frac{1}{n} \min_{\tau}
 \frac{c_\tau(\chi_{n}) \overline{\tau}}{\overline{\tau} - 1}.
\end{align*}
For each of the five $n$, the classes contributing to the minima are
in bold on Table~\ref{chartabs5}.
So, extremely simply, for computing $\walp(n)$ on the left, the largest $\chi_n(\tau)$ besides $\chi_n(e)$ are in bold.  
For computing $\alp(n)$ on the right, the $c_\tau(\chi_n)$  with 
$c_\tau(\chi_n)/c_\tau(\phi)$ minimized
are put in bold.   For the group $S_5$, one has agreement 
$\walp(n) = \alp(n)$ in all five cases.
This equality occurs for 170 of the lines in Tables~\ref{tablelabel1}--\ref{tablelabel8}, with the other possibility $\walp(n) < \alp(n)$
occurring for the remaining 25 lines.  

For any cutoff $B$, conductor relations give
\[ 
\cK(\chi_n;B^{\walp(n)}) \subseteq \cK(\phi; B).
\]
One has an analogous inclusion for general $(G,\chi)$, with $\phi$ again the
regular character for $G$.   When $G$ satisfies the tame-wild principle 
of \cite{jr-tame-wild}, the $\walp$ in exponents can be replaced by $\alp$.
The group $S_5$ does satisfy the tame-wild principle, but in this case
 the replacement has no effect.  

The final results are on Table~\ref{tablelabel1}.  In particular for $n = 4a$, $4b$, $5a$, $5b$, $6a$ the 
unique minimizing fields are $K^{103}$, $K^{21}$, $K^{14}$, $K^{6}$, and $K^{12}$,
with root conductors approximately $6.33$, $18.72$, $17.78$, $16.27$, and $18.18$.   
The lengths of the initial segments identified are $45$, $15$, $186$, $592$, and $110$.
Note that because of the relations 
$\phi_5 = 1+ \chi_{4a}$ and $\phi_6  = 1 + \chi_{5b}$, the results for $4a$ and $5b$ 
are just translation of known minima of discriminants of number fields
with Galois groups $5T5$ and $6T14$ respectively.  For $4b$, $5b$,
$6a$, and the majority
of the characters on the tables of the next section, the first root conductor
and the entire initial segment are new.

\section{Tables for $84$ groups $G$}
\label{Tables}
In this section, we present our computational results for small
Galois types.   For 
simplicity, we focus on results coming from complete lists of Galois
number fields.  Summarizing statements are given in
\S\ref{twotheorems} and then many more details in
\S\ref{table-org}.

\subsection{Lower bounds and initial segments}
\label{twotheorems}
We consider all groups with a faithful transitive permutation 
representation in some degree from two to nine, except we exclude the 
nonsolvable groups in degrees eight and nine.   There are $84$ such groups, 
and we consider all associated 
Galois types $(G,\chi)$ with $\chi$ a rationally irreducible faithful character.
Our first result gives conditional lower bounds: 

\begin{thm} \label{bounds-are-right}
  For each of the $195$ Galois types  $(G,\chi)$ listed in
  Tables~\ref{tablelabel1}--\ref{tablelabel8}, the listed value 
  $\mathfrak{d}$ gives a lower bound for the root conductor of all
  Artin representations of type $(G,\chi)$,
  assuming the Artin conjecture and Riemann hypothesis for relevant
  $L$-functions.
\end{thm}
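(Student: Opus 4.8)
The plan is to prove Theorem~\ref{bounds-are-right} by reducing it to Theorem~\ref{thm2} together with the machinery of Section~\ref{choices} and \S\ref{polytope-pg}, applied uniformly to each of the $195$ Galois types on the tables. For a fixed type $(G,\chi)$, the claim is that the tabulated number $\mathfrak{d}=\mathfrak{d}(G,\chi)$ is a valid conditional lower bound for $\delta_\Chi$ for every Artin representation $\Chi$ of type $(G,\chi)$. By definition, $\mathfrak{d}(G,\chi) = \min_c \mathfrak{d}(G,c,\chi,u)$, where $u \in \{\walp,\alp\}$ is the exponent type used for that particular $(G,\chi)$ (as specified in \S\ref{remainingrows}), and $\mathfrak{d}(G,c,\chi,u)$ is the largest value of $m(G,c,\chi,\phi,u)$ obtained as $\phi$ ranges over the finite inspected set of nonnegative characters (the $\phi_{\chi,L}$, $\phi_{\chi,S}$, $\phi_{\chi,Q}$, the permutation characters $\phi_{G/H}$ including $\phi_G$, and the polytope vertices when not too numerous).

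First I would fix $\Chi$ of type $(G,\chi)$, so there is a surjection $h:\G\rightarrow G$ with $h(\sigma)=c_0$ for some involution $c_0\in G$ and $\Chi=\chi\circ h$. For each inspected $\phi$ in $P_G$, set $\Phi=\phi\circ h$; then $(\Chi,\Phi)$ has joint type $(G,c_0,\chi,\phi)$, and $\Phi$ is nonnegative since $\phi$ has nonnegative coordinates in both the $x_i$ and $y_j$ bases. Provided the Artin conjecture and Riemann hypothesis hold for $L(\Phi,s)$, Theorem~\ref{thm2} gives $\delta_\Chi \geq m(G,c_0,\chi,\phi,b) \geq m(G,c_0,\chi,\phi,\walp)$, and when the tame-wild principle holds for $(G,\chi)$ we may instead use $m(G,c_0,\chi,\phi,\alp)$ by \eqref{aba2} and \eqref{ba}. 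Taking the maximum over the inspected $\phi$ yields $\delta_\Chi \geq \mathfrak{d}(G,c_0,\chi,u) \geq \min_c \mathfrak{d}(G,c,\chi,u) = \mathfrak{d}(G,\chi)$, which is the assertion. The auxiliary $L$-functions whose analytic hypotheses are invoked are exactly the $L(\Phi,s)$ for the finitely many inspected $\phi$; since each $\phi$ is a nonnegative integer combination of irreducible characters, $L(\Phi,s)$ is a product of Artin $L$-functions of irreducible constituents, so "Artin conjecture and Riemann hypothesis for relevant $L$-functions" means these hypotheses for those irreducible constituents.

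What remains is the verification that the tabulated numbers are computed correctly, and this is where the real work — though routine in character — lies. For the four named families of $\phi$ the bound $m(G,c,\chi,\phi,u)$ is given in closed form by \eqref{formlinear}, \eqref{formsquare}, \eqref{formquad}, \eqref{formula3a}, each requiring only the spectral data $\widecheck\chi$, $\widehat\chi$, $\widetilde\chi$, the inner product $(\chi,\chi)$, the signature $r=\chi(c)$, and $|G|$, all read off from the rational character table. For a general permutation character $\phi_{G/H}$ or polytope vertex, one evaluates $\walp(G,\chi,\phi)$ (or $\alp(G,\chi,\phi)$) from \eqref{alphaprod} using the tame table entries $c_\tau(\chi)$, $c_\tau(\phi)$, and then computes the base $M(\phi(e),\phi(c),(\phi,1))$ by numerically maximizing $N(z)+\tfrac{r}{n}R(z)-\tfrac{u}{n}P(z)$ — using the rapidly-computable formulas for $N(z)$, $R(z)$, and the closed form for $P(z)$ from \S\ref{basic-quantities}. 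The main obstacle is therefore not conceptual but the reliability of these $195$ numerical optimizations and the underlying group-theoretic computations (character tables, resolvent discriminants, tame tables): one must argue that the maximization of $M$ is carried out to sufficient precision and rounded in the safe direction (downward) so that each reported $\mathfrak{d}$ is genuinely a lower bound, and that for each $(G,\chi)$ the choice between $\walp$ and $\alp$ is legitimate, i.e. that the tame-wild principle of \cite{jr-tame-wild} has been verified whenever $\alp$ is used. Granting those computational checks, the theorem follows immediately from the chain of lemmas and Theorem~\ref{thm2} exactly as above.
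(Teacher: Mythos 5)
Your proposal is correct and takes essentially the same approach the paper intends: the paper gives no explicit proof of Theorem~\ref{bounds-are-right}, which is a summary of the framework of Sections~\ref{Type}--\ref{otherchoices} (Theorem~\ref{thm2} applied to the inspected auxiliary characters $\phi$, with $\min_c$ of $\max_\phi$, and $\walp$ replaced by $\alp$ only when the tame-wild principle certifies $b=\alp$) together with the numerical evaluations reported in the tables. Your reconstruction of that chain, and your identification of the residual burden as the reliability of the $195$ numerical optimizations, the resolvent/character-table computations, and the TW verifications, matches what the authors implicitly rely on.
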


The bounds in Tables~\ref{tablelabel1}--\ref{tablelabel8} are graphed 
with the best previously known 
\begin{figure}[htb]
\centering
\includegraphics[width=4.5in]{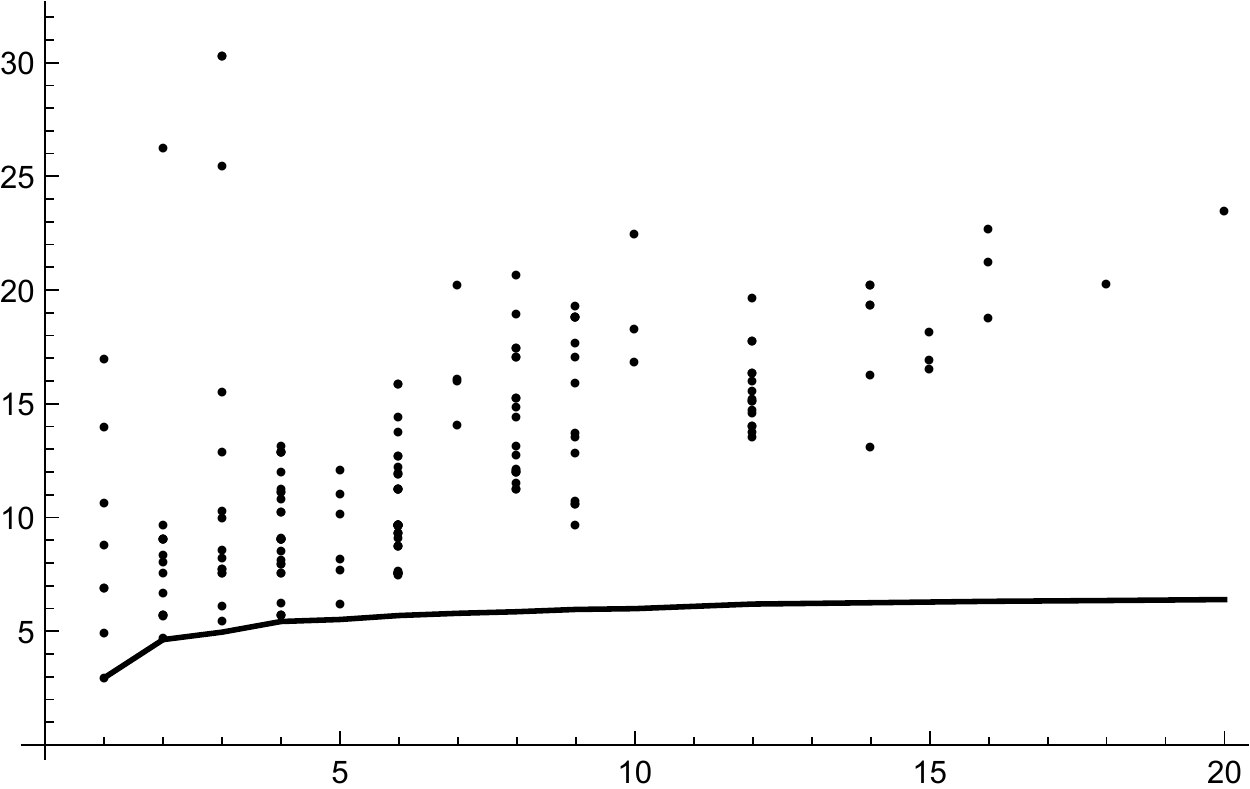}
\caption{Points $(\chi_1(e),\mathfrak{d}(G,\chi))$ present lower bounds
from Tables~\ref{tablelabel1}--\ref{tablelabel8}. 
The piecewise-linear curve plots lower bounds from \cite{PM}.  Both the points 
and the curve
assume the
Artin conjecture and Riemann hypothesis for the relevant $L$-functions.
\label{amalia-plot}}
\end{figure}
bounds from \cite{PM} in Figure~\ref{amalia-plot}.  
The horizontal
axis represents the dimension $n_1=\chi_1(e)$ of any irreducible constituent 
$\chi_1$ of $\chi$.  The vertical axis corresponds to lower bounds on
root conductors.  The
piecewise-linear curve connects  bounds from \cite{PM}, and
there is one dot at height $\mathfrak{d}(G,\chi)$ for each $(G,\chi)$ from
Tables~\ref{tablelabel1}--\ref{tablelabel8} with $\chi_1(e)\leq 20$.
Here we are freely passing back and forth between a rational character
$\chi$ and
an irreducible constituent $\chi_1$ via $\delta_1(G,\chi) =
\delta_1(G,\chi_1)$, which is a direct consequence of \eqref{discequal}.

Not surprisingly, the type-based bounds are larger.  In low 
dimensions $n_1$, some type-based bounds are close to the general bounds,
but by dimension $5$ there is a clear separation which widens as the
dimension grows.  This may in part be explained by the fact that we
are only seeing a small number of representations for each of these
dimensions.  However, as we explain in \S\ref{speculation},  
we also expect that the asymptotic lower bound
 of $\sqrt{\Omega} \approx 6.7$ \cite{PM} is not optimal, and that 
this bound is more likely to be at least
$\Omega\approx 44.8$.

Our second result unconditionally identifies initial segments:
\begin{thm} \label{minima-and-segments-are-right} For $144$ Galois
  types $(G,\chi)$, Tables~\ref{tablelabel1}--\ref{tablelabel8} identify
  a non-empty initial segment  $\cL(G,\chi;B^\beta)$, and in particular
  identify the minimal root conductor $\delta_1(G,\chi)$.
\end{thm}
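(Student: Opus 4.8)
The plan is to run, for each of the $144$ Galois types $(G,\chi)$, the transfer-of-completeness procedure illustrated for $G=S_5$ in Section~\ref{S5}, and to record its output. Concretely, for a fixed $(G,\chi)$ I would first select a permutation character $\phi$ of $G$ (typically the regular character $\phi_G$, but for a few large groups a smaller $\phi_{G/H}$ whose relevant number-field table is available) for which a complete list $\cK(\phi;B)$ of number fields is known from \cite{jr-global-database}, ordered by increasing $D_\phi$ as $K^1, K^2, \dots$. I would then compute the exponent $\beta = \walp(G,\chi,\phi)$ from the rational character table via \eqref{alphaprod}, or $\beta = \alp(G,\chi,\phi)$ in the cases where the tame-wild principle of \cite{jr-tame-wild} is known to apply (as recorded in the $\walp/\alp$ columns of Tables~\ref{tablelabel1}--\ref{tablelabel8}). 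Lemma~\ref{lemma2}, in the form of the conductor relation $\cK(\chi;B^\beta)\subseteq\cK(\phi;B)$ used in Section~\ref{S5}, then guarantees that every field of type $(G,\chi)$ with $\delta_\chi\le B^\beta$ already appears among the known $K^i$; so the finite set $\{K^i : \delta_\chi(K^i)\le B^\beta\}$ is a \emph{complete} list, and sorting it by $\delta_\chi$ produces the claimed initial segment $\cL(G,\chi;B^\beta)$, whose first element realizes $\delta_1(G,\chi)$.

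The key computational steps, in order, are: (1) obtain the rational character table of $G$ and express each rationally irreducible $\chi$ as an explicit integer combination of permutation characters $\phi_{G/H}$, via linear algebra, exactly as in the displayed inversion formulas for $S_5$; (2) for each field $K$ in the known table, compute the needed resolvent algebras $K_{G/H}$ using the root-difference construction $f_{ab}(x)=\prod_{i,j}(x-\alpha_i-\beta_j)$ and their discriminants $D_{G/H}(K)$ — crucially, using that the set of ramified primes is known a priori so that only $p$-parts for those $p$ need be extracted (in \emph{Pari/gp}), avoiding factorization of huge polynomial discriminants; (3) assemble $D_\chi(K)=\prod_H D_{G/H}(K)^{k_H}$ from the character expansion, form $\delta_\chi(K)=D_\chi(K)^{1/\chi(e)}$, and sort; (4) read off the cutoff $B^\beta$, the length of the initial segment, and the minimizing field(s). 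Steps (1), (3), (4) are routine linear algebra and bookkeeping; the substance is step (2), carried out once per group and reused across all its characters, together with the verification that the chosen $\beta$ is legitimate (i.e. that $\walp$ is always valid by \eqref{aba2}, and that $\alp$ is used only when \cite{jr-tame-wild} licenses it).

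I expect the main obstacle to be step (2) at the scale required: for the larger groups the relevant resolvent polynomials have large degree and very large coefficients, so naively computing $\disc f_{ab}$ is infeasible, and one must instead compute each $p$-adic factor $D_{G/H,p}$ directly for the finitely many bad primes $p$ (those dividing $D_\phi(K)$). A secondary issue is completeness bookkeeping at the boundary: ties among fields of equal $D_\phi$, and the need to check that such ties do not straddle the cutoff $B^\beta$ in a way that would make the reported initial segment ambiguous — in practice, as noted for $S_5$, the resolution of ties does not affect the tabulated results, but this must be confirmed type by type. Finally, the count $144$ (out of the $195$ types of Theorem~\ref{bounds-are-right}) reflects exactly those $(G,\chi)$ for which the available number-field table $\cK(\phi;B)$ is deep enough that $\cK(\chi;B^\beta)$ is nonempty; for the remaining $51$ types the same procedure runs but yields an empty segment, so they are simply omitted from the assertion.
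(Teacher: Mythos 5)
Your proposal is correct and follows the same route the paper takes: it is exactly the $S_5$ methodology of Section~\ref{S5} (express $\chi$ in permutation characters, compute resolvent discriminants $p$-locally for the known bad primes, apply the conductor relation $\cK(\chi;B^\beta)\subseteq\cK(\phi_G;B)$ with $\beta$ equal to $\walp$ or $\alp$ as the tame-wild status permits, then sort by $\delta_\chi$), applied systematically to the $84$ groups of Section~\ref{Tables}. One small correction: the paper computes the exponent $\beta$ always with respect to the regular character $\phi_G$ and transfers completeness from complete lists of \emph{Galois} fields, not from $\phi_{G/H}$ for some smaller subgroup $H$ --- the auxiliary resolvent differences $c\chi=\phi_m-\phi_k$ appear only in \S\ref{discussion} as a supplementary device for pinning down $\delta_1$ when the $\#$ column is $0$, which lies outside the $144$ types covered by this theorem.
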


\subsection{Tables detailing results on lower bounds and initial segments}    
\label{table-org} 
Our tables are organized by the standard doubly-indexed lists of 
transitive permutation groups $mTj$, with degrees $m$ running from $2$
through $9$.  Within a degree, the blocks of rows are indexed by
increasing $j$.  There is no block to print if $mTj$ has no faithful
irreducible characters.  For example, there is no block to print for
groups having noncyclic center, such as $4T2 = V = C_2 \times C_2$ or
$8T9 = D_4 \times C_2$.  Also the block belonging to $mTj$ is omitted
if the abstract group $G$ underlying $mTj$ has appeared earlier.  For
example $G=S_4$ has four transitive realization in degrees $m \leq 8$,
namely $4T5$, $6T7$, $6T8$, and $8T14$; there is correspondingly a
$4T5$ line on our tables, but no $6T7$, $6T8$, or $8T14$ lines.  

\begin{table}[htbp] \centering \caption{\label{tablelabel1}Artin $L$-functions with small conductor from groups in degrees $2$, $3$, $4$, and $5$}
\begin{tabular}{l@{\;}r@{\;}c@{\;}c@{\;}|@{\;}r@{\;}r@{\;}c@{}r@{\;}|@{\;}c@{\;}r@{\;}r}
$G$& $n_1$ & $z$ & $\Imnaught$ & \multicolumn{1}{c}{$\mathfrak{d}$} & \multicolumn{1}{c}{$\delta_1$} & $\Delta_1$ & pos'n & $\beta$ & $B^\beta$ & \# \\
\hline
$C_2$ & 2 &  TW  & 2, 2 &  & 1.73 & $3^*$& &  & 100 & 6086\\
2T1 & $1$ &  & $[-1, -1]$ & $2.97_{\ell}$ & 3.00 & $3$ & 1 &  $2.00_{\twnull}$ & 10000.00 & 6086\\
\hline
$C_3$ & 3 &  TW  & 2, 2 &  & 3.66 & $7^*$& &  & 500 & 1772\\
3T1 & $1$ & $\sqrt{-3}$ & $[-1, -1]$ & $6.93_{\ell}$ & 7.00 & $7$ & 1 &  $1.50_{\twnull}$ & 11180.34 & 1772\\
\hline
$S_3$ & 6 &  TW  & 3, 4 &  & 4.80 & $23^*$& &  & 250 & 24484\\
3T2 & $2$ &  & $[-1, 0]$ & $4.74_{\ell}$ & 4.80 & $23$ & 1 &  $1.00_{\twnull}$ & 250.00 & 13329\\
\hline
$C_4$ & 4 &  TW  & 3, 4 &  & 3.34 & $5^*$& &  & 150 & 2668\\
4T1 & $1$ & $i$ & $[-2, 0]$ & $4.96_{S}$ & 5.00 & $5$ & 1 &  $1.33_\twgood$ & 796.99 & 489\\
\hline
$D_4$ & 8 &  TW  & 5, 10 &  & 6.03 & $3^*7^*$& &  & 150 & 31742\\
4T3 & $2$ &  & $[-2, 0]$ & $5.74_{q}$ & 6.24 & $3\!\cdot\!  13$ & 2 &  $1.00_{\twnull}$ & 150.00 & 9868\\
\hline
$A_4$ & 12 &  TW  & 3, 4 &  & 10.35 & $2^*7^*$& &  & 150 & 846\\
4T4 & $3$ &  & $[-1, 0]$ & $7.60_{q}$ & 14.64 & $2^{6} 7^{2}$ & 1 &  $1.00_{\twnull}$ & 150.00 & 270\\
\hline
$S_4$ & 24 &  TW  & 5, 12 &  & 13.56 & $2^*11^*$& &  & 150 & 14587\\
6T8 & $3$ &  & $[-1, 1]$ & $8.62_{G}$ & 11.30 & $2^{2} 19^{2}$ & 4 &  $0.89_\twgood$ & 85.96 & 779\\
4T5 & $3$ &  & $[-1, 1]$ & $5.49_{p}$ & 6.12 & $229$ & 9 &  $0.67_{\twnull}$ & 28.23 & 1603\\
\hline
$C_5$ & 5 &  TW  & 2, 2 &  & 6.81 & $11^*$& &  & 200 & 49\\
5T1 & $1$ & $\zeta_{5}$ & $[-1, -1]$ & $10.67_{\ell}$ & 11.00 & $11$ & 1 &  $1.25_{\twnull}$ & 752.12 & 49\\
\hline
$D_5$ & 10 &  TW  & 3, 4 &  & 6.86 & $47^*$& &  & 200 & 3622\\
5T2 & $2$ & $\sqrt{5}$ & $[-1, 0]$ & $6.73_{q}$ & 6.86 & $47$ & 1 &  $1.00_{\twnull}$ & 200.00 & 3219\\
\hline
$F_5$ & 20 &  TW  & 4, 8 &  & 11.08 & $2^*5^*$& &  & 200 & 3010\\
5T3 & $4$ &  & $[-1, 0]$ & $10.28_{q}$ & 13.69 & $2^{4} 13^{3}$ & 2 &  $1.00_{\twnull}$ & 200.00 & 2066\\
\hline
$A_5$ & 60 &  TW  & 4, 8 &  & 18.70 & $2^*17^*$& &  & 85 & 473\\
5T4 & $4$ &  & $[-1, 1]$ & $8.18_{g}$ & 11.66 & $2^{6} 17^{2}$ & 1 &  $0.75_{\twnull}$ & 27.99 & 46\\
6T12 & $5$ &  & $[-1, 1]$ & $10.18_{p}$ & 12.35 & $2^{6} 67^{2}$ & 3 &  $0.80_{\twnull}$ & 34.96 & 216\\
12T33 & $3$ & $\sqrt{5}$ & $[-2, 1]$ & $10.34_{g}$ & 26.45 & $2^{6} 17^{2}$ & 1 &  $0.83_{\twnull}$ & 40.54 & 18\\
\hline
$S_5$ & 120 &  TW  & 7, 40 &  & 24.18 & $2^*3^*5^*$& &  & 85 & 2080\\
5T5 & $4$ &  & $[-1, 2]$ & $6.28_{\ell}$ & 6.33 & $1609$ & 103 &  $0.50_{\twnull}$ & 9.22 & 45\\
10T12 & $4$ &  & $[-2, 1]$ & $10.28_{V}$ & 18.72 & $5^{2} 17^{3}$ & 21 &  $0.75_{\twnull}$ & 27.99 & 15\\
10T13 & $5$ &  & $[-1, 1]$ & $12.13_{V}$ & 16.27 & $2^{4} 3^{2} 89^{2}$ & 6 &  $0.80_{\twnull}$ & 34.96 & 592\\
6T14 & $5$ &  & $[-1, 1]$ & $11.09_{g}$ & 17.78 & $2^{6} 3^{4} 7^{3}$ & 14 &  $0.80_{\twnull}$ & 34.96 & 186\\
20T35 & $6$ &  & $[-2, 1]$ & $12.26_{g}$ & 18.18 & $2^{4} 3^{3} 17^{4}$ & 12 &  $0.83_{\twnull}$ & 40.54 & 110\\
\end{tabular}
\end{table}
\begin{table}[htbp] \centering \caption{\label{tablelabel2}Artin $L$-functions of small conductor from sextic groups}
\begin{tabular}{l@{\;}r@{\;}c@{\;}c@{\;}|@{\;}r@{\;}r@{\;}c@{}r@{\;}|@{\;}c@{\;}r@{\;}r}
$G$& $n_1$ & $z$ & $\Imnaught$ & \multicolumn{1}{c}{$\mathfrak{d}$} & \multicolumn{1}{c}{$\delta_1$} & $\Delta_1$ & pos'n & $\beta$ & $B^\beta$ & \# \\
\hline
$C_6$ & 6 &  TW  & 4, 6 &  & 5.06 & $7^*$& &  & 200 & 9609\\
6T1 & $1$ & $\sqrt{-3}$ & $[-2, 1]$ & $6.93_{P}$ & 7.00 & $7$ & 1 &  $1.20_\twgood$ & 577.08 & 617\\
\hline
$D_6$ & 12 &  TW  & 6, 14 &  & 8.06 & $3^*5^*$& &  & 150 & 46197\\
6T3 & $2$ &  & $[-2, 1]$ & $7.60_{G}$ & 9.33 & $3\!\cdot\!  29$ & 6 &  $1.00_\twgood$ & 150.00 & 10242\\
\hline
$S_3C_3$ & 18 &  & 6, 17 &  & 10.06 & $2^*3^*7^*$& &  & 200 & 9420\\
6T5 & $2$ & $\sqrt{-3}$ & $[-2, 1]$ & $5.69_{q*}$ & 7.21 & $2^{2} 13$ & 4 &  $0.75_{\twnull}$ & 53.18 & 503\\
\hline
$A_4C_2$ & 24 &  & 6, 16 &  & 12.31 & $2^*7^*$& &  & 150 & 6676\\
6T6 & $3$ &  & $[-3, 1]$ & $7.60_{p}$ & 8.60 & $7^{2} 13$ & 3 &  $0.67_{\twnull}$ & 28.23 & 98\\
\hline
$S_3^2$ & 36 &  & 9, 69 &  & 15.53 & $2^*19^*$& &  & 200 & 45117\\
6T9 & $4$ &  & $[-2, 1]$ & $7.98_{q*}$ & 14.83 & $2^{4} 5^{2} 11^{2}$ & 27 &  $0.75_{\twnull}$ & 53.18 & 824\\
\hline
$C_3^2{\rtimes}C_4$ & 36 &  TW  & 5, 16 &  & 23.57 & $3^*5^*$& &  & 150 & 331\\
6T10 & $4^{\rlap{\scriptsize{2}}}$ &  & $[-2, 1]$ & $7.98_{q*}$ & 17.80 & $2^{11} 7^{2}$ & 2 &  $0.75_{\twnull}$ & 42.86 & 33\\
\hline
$S_4C_2$ & 48 &  & 10, 96 &  & 16.13 & $2^*23^*$& &  & 150 & 70926\\
6T11 & $3^{\rlap{\scriptsize{2}}}$ &  & $[-3, 1]$ & $6.14_{g}$ & 6.92 & $2^{2} 83$ & 7 &  $0.67_{\twnull}$ & 28.23 & 3694\\
\hline
$C_3^2{\rtimes}D_4$ & 72 &  TW  & 9, 105 &  & 21.76 & $3^*11^*$& &  & 150 & 8536\\
6T13 & $4^{\rlap{\scriptsize{2}}}$ &  & $[-2, 2]$ & $7.60_{p}$ & 7.90 & $3^{2} 433$ & 52 &  $0.50_{\twnull}$ & 12.25 & 41\\
12T36 & $4^{\rlap{\scriptsize{2}}}$ &  & $[-2, 1]$ & $11.29_{P}$ & 23.36 & $3^{5} 5^{2} 7^{2}$ & 18 &  $0.75_{\twnull}$ & 42.86 & 106\\
\hline
$A_6$ & 360 &  TW  & 6, 28 &  & 31.66 & $2^*3^*$& &  & 60 & 26\\
6T15 & $5^{\rlap{\scriptsize{2}}}$ &  & $[-1, 2]$ & $7.71_{\ell}$ & 12.35 & $2^{6} 67^{2}$ & 8 &  $0.60_{\twnull}$ & 11.67 & 0\\
10T26 & $9$ &  & $[-1, 1]$ & $17.69_{g}$ & 28.20 & $2^{18} 3^{16}$ & 1 &  $0.89_{\twnull}$ & 38.07 & 7\\
30T88 & $10$ &  & $[-2, 1]$ & $18.34_{g}$ & 30.61 & $2^{24} 3^{16}$ & 1 &  $0.90_{\twnull}$ & 39.84 & 4\\
36T555 & $8$ & $\sqrt{5}$ & $[-2, 1]$ & $20.70_{g}$ & 42.81 & $2^{18} 3^{16}$ & 1 &  $0.94_{\twnull}$ & 46.45 & 3\\
\hline
$S_6$ & 720 &  & 11, 596 &  & 33.50 & $2^*3^*5^*$& &  & 60 & 99\\
12T183 & $5^{\rlap{\scriptsize{2}}}$ &  & $[-3, 2]$ & $8.21_{v}$ & 11.53 & $11^{2} 41^{2}$ & 6 &  $0.60_{\twnull}$ & 11.67 & 1\\
6T16 & $5^{\rlap{\scriptsize{2}}}$ &  & $[-1, 3]$ & $6.23_{\ell}$ & 6.82 & $14731$ & 53 &  $0.40_{\twnull}$ & 5.14 & 0\\
10T32 & $9$ &  & $[-1, 3]$ & $10.77_{v}$ & \textit{16.60} & $2^{15} 11^{3} 13^{3}$ & 74 &  $0.67_{\twnull}$ & 15.33 & 0\\
20T145 & $9$ &  & $[-3, 1]$ & $19.33_{g}$ & 31.25 & $2^{6} 5^{6} 73^{4}$ & 16 &  $0.89_{\twnull}$ & 38.07 & 4\\
30T176 & $10^{\rlap{\scriptsize{2}}}$ &  & $[-2, 2]$ & $16.88_{v}$ & 24.22 & $11^{4} 41^{6}$ & 6 &  $0.80_{\twnull}$ & 26.46 & 1\\
36T1252 & $16$ &  & $[-2, 1]$ & $22.73_{g}$ & 35.46 & $2^{36} 3^{8} 7^{12}$ & 5 &  $0.94_{\twnull}$ & 46.45 & 11\\
\end{tabular}
\end{table}
\begin{table}[htbp] \centering \caption{\label{tablelabel3}Artin $L$-functions of small conductor from septic groups}
\begin{tabular}{l@{\;}r@{\;}c@{\;}c@{\;}|@{\;}r@{\;}r@{\;}c@{}r@{\;}|@{\;}c@{\;}r@{\;}r}
$G$& $n_1$ & $z$ & $\Imnaught$ & \multicolumn{1}{c}{$\mathfrak{d}$} & \multicolumn{1}{c}{$\delta_1$} & $\Delta_1$ & pos'n & $\beta$ & $B^\beta$ & \# \\
\hline
$C_7$ & 7 &  TW  & 2, 2 &  & 17.93 & $29^*$& &  & 200 & 15\\
7T1 & $1$ & $\zeta_{7}$ & $[-1, -1]$ & $14.03_{\ell}$ & 29.00 & $29$ & 1 &  $1.17_{\twnull}$ & 483.65 & 15\\
\hline
$D_7$ & 14 &  TW  & 3, 4 &  & 8.43 & $71^*$& &  & 200 & 2078\\
7T2 & $2$ & $\zeta_7^+$ & $[-1, 0]$ & $8.38_{q}$ & 8.43 & $71$ & 1 &  $1.00_{\twnull}$ & 200.00 & 1948\\
\hline
$C_7{\rtimes}C_3$ & 21 &  TW  & 3, 4 &  & 31.64 & $2^*73^*$& &  & 100 & 11\\
7T3 & $3$ & $\sqrt{-7}$ & $[-1, 0]$ & $25.50_{q}$ & 34.93 & $2^{3} 73^{2}$ & 1 &  $1.00_{\twnull}$ & 100.00 & 8\\
\hline
$F_7$ & 42 &  TW  & 5, 12 &  & 15.99 & $2^*7^*$& &  & 75 & 342\\
7T4 & $6$ &  & $[-1, 0]$ & $14.47_{q}$ & 18.34 & $11^{3} 13^{4}$ & 2 &  $1.00_{\twnull}$ & 75.00 & 287\\
\hline
$\textrm{GL}_3(2)$ & 168 &  TW  & 5, 14 &  & 32.25 & $2^*3^*11^*$& &  & 45 & 19\\
42T37 & $3$ & $\sqrt{-7}$ & $[-2, 2]$ & $15.55_{G}$ & 26.06 & $7^{2} 19^{2}$ & 7 &  $0.89_\twgood$ & 29.48 & 1\\
7T5 & $6$ &  & $[-1, 2]$ & $9.36_{p}$ & 11.23 & $13^{2} 109^{2}$ & 4 &  $0.67_{\twnull}$ & 12.65 & 1\\
8T37 & $7$ &  & $[-1, 1]$ & $14.10_{g}$ & 32.44 & $3^{8} 7^{8}$ & 11 &  $0.86_{\twnull}$ & 26.12 & 0\\
21T14 & $8$ &  & $[-1, 1]$ & $14.90_{g}$ & 23.16 & $2^{6} 3^{6} 11^{6}$ & 1 &  $0.88_{\twnull}$ & 27.96 & 1\\
\hline
$A_7$ & 2520 &  & 8, 115 &  & 39.52 & $2^*3^*7^*$& &  & 45 & 1\\
7T6 & $6$ &  & $[-1, 3]$ & $9.13_{\ell}$ & 12.54 & $3^{6} 73^{2}$ & 26 &  $0.50_{\twnull}$ & 6.71 & 0\\
15T47 & $14$ &  & $[-1, 2]$ & $19.39_{g}$ & \textit{36.05} & $3^{24} 53^{6}$ & 4 &  $0.86_{\twnull}$ & 26.12 & 0\\
21T33 & $14$ &  & $[-1, 2]$ & $19.39_{g}$ & \textit{31.07} & $3^{18} 17^{10}$ & 2 &  $0.86_{\twnull}$ & 26.12 & 0\\
42T294 & $15$ &  & $[-1, 3]$ & $18.18_{v}$ & \textit{35.73} & $2^{12} 3^{20} 7^{12}$ & 1 &  $0.80_{\twnull}$ & 21.02 & 0\\
70 & $10$ & $\sqrt{-7}$ & $[-4, 2]$ & $22.49_{g}$ & \textit{41.21} & $2^{9} 3^{14} 7^{8}$ & 1 &  $0.90_{\twnull}$ & 30.75 & 0\\
42T299 & $21$ &  & $[-3, 1]$ & $26.95_{g}$ & \textit{38.33} & $2^{18} 3^{30} 7^{16}$ & 1 &  $0.95_{\twnull}$ & 37.54 & 0\\
70 & $35$ &  & $[-1, 1]$ & $\mathit{28.79_{g}}$ & \textit{41.28} & $2^{30} 3^{50} 7^{28}$ & 1 &  $0.97_\twbad$ & 40.36 & 0\\
\hline
$S_7$ & 5040 &  & 15, -- &  & 40.49 & $2^*3^*5^*$& &  & 35 & 0\\
7T7 & $6$ &  & $[-1, 4]$ & $7.50_{\ell}$ & 7.55 & $184607$ &  &
 $0.33_{\twnull}$ & 3.27 & 0\\
14T46 & $6$ &  & $[-4, 3]$ & $\mathit{7.66_{p}}$ & \textit{17.02} & $2^{2} 7^{5} 19^{2}$ & 194 &  $0.50_{\twnull}$ & 5.92 & 0\\
30T565 & $14$ &  & $[-2, 4]$ & $16.32_{p}$ & \textit{26.02} & $2^{20} 53^{8}$ & 2 &  $0.71_{\twnull}$ & 12.67 & 0\\
30T565 & $14$ &  & $[-4, 2]$ & $20.24_{g}$ & \textit{30.98} & $2^{14} 71^{9}$ & 46 &  $0.86_{\twnull}$ & 21.06 & 0\\
42T413 & $14$ &  & $[-6, 2]$ & $20.24_{g}$ & \textit{38.27} & $2^{20} 3^{12} 11^{10}$ & 6 &  $0.86_{\twnull}$ & 21.06 & 0\\
21T38 & $14$ &  & $[-1, 6]$ & $13.12_{p}$ & \textit{22.02} & $2^{24} 3^{12} 29^{4}$ & 170 &  $0.57_{\twnull}$ & 7.63 & 0\\
42T412 & $15$ &  & $[-3, 5]$ & $16.96_{p}$ & \textit{32.90} & $3^{12} 5^{5} 11^{13}$ & 24 &  $0.67_{\twnull}$ & 10.70 & 0\\
42T411 & $15$ &  & $[-5, 3]$ & $16.56_{g}$ & \textit{29.92} & $2^{30} 3^{12} 17^{6}$ & 3 &  $0.80_{\twnull}$ & 17.19 & 0\\
70 & $20$ &  & $[-4, 2]$ & $23.53_{g}$ & \textit{35.18} & $2^{34} 53^{12}$ & 2 &  $0.90_{\twnull}$ & 24.53 & 0\\
42T418 & $21$ &  & $[-3, 3]$ & $20.24_{g}$ & \textit{33.42} & $2^{41} 3^{18} 17^{9}$ & 3 &  $0.86_{\twnull}$ & 21.06 & 0\\
84 & $21$ &  & $[-3, 1]$ & $28.27_{g}$ & \textit{39.59} & $2^{38} 3^{18} 7^{16}$ & 4 &  $0.95_{\twnull}$ & 29.55 & 0\\
70 & $35$ &  & $[-1, 5]$ & $\mathit{25.92_{p}}$ & \textit{40.71} & $2^{61} 3^{30} 7^{28}$ & 4 &  $0.86_{\twnull}$ & 21.06 & 0\\
126 & $35$ &  & $[-5, 1]$ & $\mathit{30.23_{g}}$ & \textit{43.26} & $2^{54} 3^{42} 5^{30}$ &  &
 $0.97_\twbad$ & 31.62 & 0\\
\end{tabular}
\end{table}
\begin{table}[htbp] \centering \caption{\label{tablelabel4}Artin $L$-functions of small conductor from octic groups}
\begin{tabular}{l@{\;}r@{\;}c@{\;}c@{\;}|@{\;}r@{\;}r@{\;}c@{}r@{\;}|@{\;}c@{\;}r@{\;}r}
$G$& $n_1$ & $z$ & $\Imnaught$ & \multicolumn{1}{c}{$\mathfrak{d}$} & \multicolumn{1}{c}{$\delta_1$} & $\Delta_1$ & pos'n & $\beta$ & $B^\beta$ & \# \\
\hline
$C_8$ & 8 &  TW  & 4, 8 &  & 11.93 & $17^*$& &  & 125 & 198\\
8T1 & $1$ & $\zeta_{8}$ & $[-4, 0]$ & $8.84_{S}$ & 17.00 & $17$ & 1 &  $1.14_\twgood$ & 249.15 & 41\\
\hline
$Q_8$ & 8 &  TW  & 5, 10 &  & 18.24 & $2^*3^*$& &  & 100 & 72\\
8T5 & $2$ &  & $[-2, 0]$ & $26.29_{S}$ & 48.00 & $2^{8} 3^{2}$ & 2 &  $1.33_\twgood$ & 464.16 & 41\\
\hline
$D_8$ & 16 &  TW  & 6, 20 &  & 9.75 & $5^*19^*$& &  & 125 & 6049\\
8T6 & $2$ & $\sqrt{2}$ & $[-4, 0]$ & $9.07_{q}$ & 9.75 & $5\!\cdot\!  19$ & 1 &  $1.00_{\twnull}$ & 125.00 & 2296\\
\hline
$C_8{\rtimes}C_2$ & 16 &  & 7, 24 &  & 9.32 & $3^*5^*$& &  & 125 & 672\\
8T7 & $2$ & $i$ & $[-4, 0]$ & $9.07_{q}$ & 15.00 & $3^{2} 5^{2}$ & 1 &  $1.00_{\twnull}$ & 125.00 & 75\\
\hline
$QD_{16}$ & 16 &  & 6, 20 &  & 10.46 & $2^*3^*$& &  & 125 & 1664\\
8T8 & $2$ & $\sqrt{-2}$ & $[-4, 0]$ & $9.07_{q}$ & 16.97 & $2^{5} 3^{2}$ & 1 &  $1.00_{\twnull}$ & 125.00 & 155\\
\hline
$Q_8{\rtimes}C_2$ & 16 &  & 9, 32 &  & 9.80 & $2^*3^*$& &  & 100 & 3366\\
8T11 & $2$ & $i$ & $[-4, 0]$ & $9.07_{q}$ & 10.95 & $2^{3} 3\!\cdot\!  5$ & 3 &  $1.00_{\twnull}$ & 100.00 & 825\\
\hline
$\textrm{SL}_2(3)$ & 24 &  TW  & 5, 14 &  & 29.84 & $163^*$& &  & 250 & 681\\
24T7 & $2$ &  & $[-2, 1]$ & $65.51_{P}$ & 163.00 & $163^{2}$ & 1 &  $1.20_\twgood$ & 754.27 & 94\\
8T12 & $2$ & $\sqrt{-3}$ & $[-4, 1]$ & $8.09_{p}$ & 12.77 & $163$ & 1 &  $0.75_{\twnull}$ & 62.87 & 78\\
\hline
 & 32 &  & 11, 74 &  & 13.79 & $2^*5^*$& &  & 125 & 11886\\
8T15 & $4$ &  & $[-4, 0]$ & $12.92_{q}$ & 16.12 & $2^{4} 5^{2} 13^{2}$ & 4 &  $1.00_{\twnull}$ & 125.00 & 3464\\
\hline
 & 32 &  & 9, 58 &  & 13.56 & $5^*11^*$& &  & 125 & 766\\
8T16 & $4$ &  & $[-4, 0]$ & $12.92_{q}$ & 16.58 & $5^{4} 11^{2}$ & 1 &  $1.00_{\twnull}$ & 125.00 & 129\\
\hline
$C_4\wr C_2$ & 32 &  & 10, 90 &  & 13.37 & $2^*5^*$& &  & 125 & 2748\\
8T17 & $2^{\rlap{\scriptsize{2}}}$ & $i$ & $[-4, 2]$ & $\mathit{5.74_{p}}$ & 8.25 & $2^{2} 17$ & 6 &  $0.50_\twbad$ & 11.18 & 3\\
\hline
 & 32 &  & 9, 58 &  & 14.05 & $2^*$& &  & 125 & 2720\\
8T19 & $4$ &  & $[-4, 0]$ & $12.92_{q}$ & 19.03 & $2^{17}$ & 1 &  $1.00_{\twnull}$ & 125.00 & 1282\\
\hline
 & 32 &  & 17, 806 &  & 18.42 & $2^*3^*5^*$& &  & 100 & 3284\\
8T22 & $4$ &  & $[-4, 0]$ & $12.92_{q}$ & 20.49 & $2^{4} 3^{2} 5^{2} 7^{2}$ & 3 &  $1.00_{\twnull}$ & 100.00 & 1162\\
\hline
$\textrm{GL}_2(3)$ & 48 &  & 7, 41 &  & 16.52 & $2^*43^*$& &  & 100 & 2437\\
24T22 & $2$ & $\sqrt{-2}$ & $[-4, 2]$ & $\mathit{5.74_{v}}$ & \textit{16.82} & $283$ & 2 &  $0.50_\twbad$ & 10.00 & 0\\
8T23 & $4$ &  & $[-4, 1]$ & $9.07_{p}$ & 9.95 & $3^{4} 11^{2}$ & 4 &  $0.75_{\twnull}$ & 31.62 & 99\\
\hline
$C_2^3{\rtimes}C_7$ & 56 &  TW  & 3, 4 &  & 17.93 & $29^*$& &  & 200 & 28\\
8T25 & $7$ &  & $[-1, 0]$ & $16.10_{q}$ & 17.93 & $29^{6}$ & 1 &  $1.00_{\twnull}$ & 200.00 & 27\\
\hline
 & 64 &  & 16, -- &  & 20.37 & $2^*5^*$& &  & 125 & 10317\\
8T26 & $4^{\rlap{\scriptsize{2}}}$ &  & $[-4, 2]$ & $9.07_{p}$ & \textit{12.85} & $3^{2} 5^{2} 11^{2}$ & 7 &  $0.50_\twbad$ & 11.18 & 0\\
\hline
$C_2\wr C_4$ & 64 &  & 11, 206 &  & 19.44 & $2^*$& &  & 125 & 2482\\
8T27 & $4^{\rlap{\scriptsize{2}}}$ &  & $[-4, 2]$ & $9.07_{p}$ & 10.60 & $5^{3} 101$ & 19 &  $0.50_{\twnull}$ & 11.18 & 1\\
\hline
$C_2 \wr C_2^2$ & 64 &  & 16, -- &  & 19.41 & $2^*7^*$& &  & 125 & 11685\\
8T29 & $4^{\rlap{\scriptsize{2}}}$ &  & $[-4, 2]$ & $9.07_{p}$ & 10.13 & $2^{4} 3^{2} 73$ & 28 &  $0.50_{\twnull}$ & 11.18 & 1\\
\end{tabular}
\end{table}
\begin{table}[htbp] \centering \caption{\label{tablelabel5}Artin $L$-functions of small conductor from octic groups}
\begin{tabular}{l@{\;}r@{\;}c@{\;}c@{\;}|@{\;}r@{\;}r@{\;}c@{}r@{\;}|@{\;}c@{\;}r@{\;}r}
$G$& $n_1$ & $z$ & $\Imnaught$ & \multicolumn{1}{c}{$\mathfrak{d}$} & \multicolumn{1}{c}{$\delta_1$} & $\Delta_1$ & pos'n & $\beta$ & $B^\beta$ & \# \\
\hline
 & 64 &  & 11, 206 &  & 19.44 & $2^*$& &  & 125 & 1217\\
8T30 & $4^{\rlap{\scriptsize{2}}}$ &  & $[-4, 2]$ & $9.07_{p}$ & \textit{14.57} & $5^{3} 19^{2}$ & 3 &  $0.50_\twbad$ & 11.18 & 0\\
\hline
 & 96 &  & 9, 49 &  & 34.97 & $2^*5^*13^*$& &  & 250 & 5520\\
8T32 & $4$ &  & $[-4, 1]$ & $9.12_{g}$ & 22.80 & $2^{6} 5^{2} 13^{2}$ & 2 &  $0.75_{\twnull}$ & 62.87 & 180\\
24T97 & $4$ & $\sqrt{-3}$ & $[-8, 1]$ & $\mathit{13.19_{g}}$ & 43.30 & $2^{6} 5^{2} 13^{3}$ & 2 &  $0.88_\twbad$ & 125.37 & 112\\
\hline
 & 96 &  & 8, 44 &  & 30.01 & $2^*5^*7^*$& &  & 150 & 791\\
8T33 & $6^{\rlap{\scriptsize{2}}}$ &  & $[-2, 2]$ & $11.29_{p}$ & 25.14 & $5^{3} 7^{4} 29^{2}$ & 12 &  $0.67_{\twnull}$ & 28.23 & 3\\
\hline
 & 96 &  & 10, 92 &  & 27.28 & $2^*3^*31^*$& &  & 110 & 1915\\
8T34 & $6$ &  & $[-2, 2]$ & $11.29_{p}$ & 22.61 & $31^{3} 67^{2}$ & 64 &  $0.67_{\twnull}$ & 22.96 & 1\\
\hline
$C_2\wr D_4$ & 128 &  & 20, -- &  & 22.91 & $2^*3^*13^*$& &  & 125 & 14369\\
8T35 & $4^{\rlap{\scriptsize{4}}}$ &  & $[-4, 2]$ & $9.07_{p}$ & 9.45 & $5^{2} 11\!\cdot\!  29$ & 110 &  $0.50_{\twnull}$ & 11.18 & 9\\
\hline
$C_2^3{\rtimes}F_{21}$ & 168 &  & 5, 14 &  & 31.64 & $2^*73^*$& &  & 200 & 342\\
8T36 & $7$ &  & $[-1, 1]$ & $16.06_{p}$ & 21.03 & $2^{6} 73^{4}$ & 1 &  $0.86_{\twnull}$ & 93.82 & 120\\
24T283 & $7$ & $\sqrt{-3}$ & $[-2, 1]$ & $\mathit{20.23_{p}}$ & 38.55 & $2^{6} 7^{11}$ & 2 &  $0.93_\twbad$ & 136.98 & 81\\
\hline
$C_2\wr A_4$ & 192 &  & 12, 700 &  & 37.27 & $2^*5^*7^*$& &  & 250 & 13649\\
8T38 & $4^{\rlap{\scriptsize{2}}}$ &  & $[-4, 2]$ & $8.56_{v}$ & 15.20 & $2^{6} 7^{2} 17$ & 11 &  $0.50_{\twnull}$ & 15.81 & 1\\
24T288 & $4^{\rlap{\scriptsize{2}}}$ & $\sqrt{-3}$ & $[-8, 4]$ & $\mathit{10.84_{p}}$ & \textit{23.95} & $2^{6} 3\!\cdot\!  5\!\cdot\!  7^{3}$ & 66 &  $0.50_{\twnull}$ & 15.81 & 0\\
\hline
 & 192 &  & 13, 559 &  & 32.35 & $2^*23^*$& &  & 100 & 1193\\
8T39 & $4^{\rlap{\scriptsize{2}}}$ &  & $[-4, 2]$ & $5.74_{p}$ & 8.71 & $2^{4} 359$ & 49 &  $0.50_{\twnull}$ & 10.00 & 8\\
24T333 & $8$ &  & $[-8, 1]$ & $\mathit{15.28_{g}}$ & 39.94 & $2^{10} 43^{6}$ & 2 &  $0.88_\twbad$ & 56.23 & 16\\
\hline
 & 192 &  & 13, 559 &  & 29.71 & $2^*23^*$& &  & 100 & 2001\\
8T40 & $4^{\rlap{\scriptsize{2}}}$ &  & $[-4, 2]$ & $\mathit{5.74_{p}}$ & \textit{13.04} & $2^{2} 5^{2} 17^{2}$ & 9 &  $0.50_\twbad$ & 10.00 & 0\\
24T332 & $8$ &  & $[-8, 1]$ & $\mathit{15.28_{g}}$ & 29.71 & $2^{12} 23^{6}$ & 1 &  $0.88_\twbad$ & 56.23 & 47\\
\hline
 & 192 &  & 14, 1210 &  & 28.11 & $2^*11^*$& &  & 100 & 4723\\
12T108 & $6^{\rlap{\scriptsize{2}}}$ &  & $[-2, 2]$ & $\mathit{11.29_{p}}$ & 20.78 & $2^{12} 3^{9}$ & 5 &  $0.67_{\twnull}$ & 21.54 & 2\\
8T41 & $6^{\rlap{\scriptsize{2}}}$ &  & $[-2, 2]$ & $11.29_{p}$ & 13.01 & $5^{3} 197^{2}$ & 13 &  $0.67_{\twnull}$ & 21.54 & 40\\
\hline
$A_4\wr C_2$ & 288 &  & 10, 178 &  & 32.18 & $2^*37^*$& &  & 135 & 1362\\
8T42 & $6$ &  & $[-2, 3]$ & $11.29_{p}$ & 11.58 & $5^{3} 139^{2}$ & 76 &  $0.50_{\twnull}$ & 11.62 & 1\\
18T112 & $9$ &  & $[-3, 1]$ & $\mathit{17.10_{g}}$ & 35.11 & $2^{24} 13^{6}$ & 2 &  $0.89_{\twnull}$ & 78.28 & 66\\
12T128 & $9$ &  & $[-3, 3]$ & $13.59_{p}$ & 22.52 & $7^{6} 233^{3}$ & 56 &  $0.67_{\twnull}$ & 26.32 & 6\\
24T703 & $6$ & $\sqrt{-3}$ & $[-4, 4]$ & $\mathit{13.79_{p}}$ & \textit{32.18} & $2^{4} 37^{5}$ & 1 &  $0.67_{\twnull}$ & 26.32 & 0\\
\hline
$C_2 \wr S_4$ & 384 &  & 20, -- &  & 31.38 & $5^*197^*$& &  & 100 & 6400\\
8T44 & $4^{\rlap{\scriptsize{4}}}$ &  & $[-4, 2]$ & $5.74_{p}$ & 7.53 & $5\!\cdot\!  643$ & 391 &  $0.50_{\twnull}$ & 10.00 & 26\\
24T708 & $8^{\rlap{\scriptsize{2}}}$ &  & $[-8, 4]$ & $\mathit{13.19_{p}}$ & \textit{25.55} & $2^{12} 5^{2} 11^{6}$ & 4 &  $0.50_{\twnull}$ & 10.00 & 0\\
\end{tabular}
\end{table}
\begin{table}[htbp] \centering \caption{\label{tablelabel6}Artin $L$-functions of small conductor from octic groups}
\begin{tabular}{l@{\;}r@{\;}c@{\;}c@{\;}|@{\;}r@{\;}r@{\;}c@{}r@{\;}|@{\;}c@{\;}r@{\;}r}
$G$& $n_1$ & $z$ & $\Imnaught$ & \multicolumn{1}{c}{$\mathfrak{d}$} & \multicolumn{1}{c}{$\delta_1$} & $\Delta_1$ & pos'n & $\beta$ & $B^\beta$ & \# \\
\hline
 & 576 &  & 16, -- &  & 29.35 & $2^*3^*$& &  & 100 & 2664\\
12T161 & $6$ &  & $[-2, 3]$ & $\mathit{7.60_{p}}$ & \textit{19.04} & $2^{8} 3^{3} 83^{2}$ & 1179 &  $0.50_{\twnull}$ & 10.00 & 0\\
8T45 & $6$ &  & $[-2, 3]$ & $7.60_{p}$ & 15.48 & $2^{14} 29^{2}$ & 110 &  $0.50_{\twnull}$ & 10.00 & 0\\
18T179 & $9$ &  & $[-3, 1]$ & $18.84_{g}$ & 29.69 & $2^{12} 5^{6} 23^{4}$ & 16 &  $0.89_{\twnull}$ & 59.95 & 121\\
12T165 & $9$ &  & $[-3, 3]$ & $10.60_{p}$ & 17.20 & $2^{6} 19^{3} 67^{3}$ & 161 &  $0.67_{\twnull}$ & 21.54 & 12\\
18T185 & $9^{\rlap{\scriptsize{2}}}$ &  & $[-3, 3]$ & $\mathit{13.74_{p}}$ & \textit{22.69} & $2^{12} 3^{11} 13^{3}$ & 6 &  $0.67_{\twnull}$ & 21.54 & 0\\
24T1504 & $12$ &  & $[-4, 4]$ & $\mathit{16.40_{p}}$ & \textit{35.03} & $2^{8} 5^{6} 31^{8}$ & 51 &  $0.67_{\twnull}$ & 21.54 & 0\\
\hline
 & 576 &  & 11, 522 &  & 49.75 & $3^*5^*7^*$& &  & 100 & 153\\
8T46 & $6$ &  & $[-2, 3]$ & $9.66_{p}$ & 19.51 & $3^{6} 5^{4} 11^{2}$ & 42 &  $0.50_{\twnull}$ & 10.00 & 0\\
12T160 & $6$ &  & $[-2, 3]$ & $\mathit{7.60_{p}}$ & \textit{27.27} & $2^{23} 7^{2}$ & 11 &  $0.50_{\twnull}$ & 10.00 & 0\\
16T1030 & $9$ &  & $[-3, 1]$ & $18.84_{g}$ & 35.40 & $2^{22} 3^{6} 13^{4}$ & 6 &  $0.89_{\twnull}$ & 59.95 & 50\\
18T184 & $9$ &  & $[-3, 1]$ & $18.84_{g}$ & 35.50 & $2^{12} 5^{7} 23^{4}$ & 7 &  $0.89_{\twnull}$ & 59.95 & 32\\
24T1506 & $12$ &  & $[-4, 4]$ & $\mathit{16.40_{p}}$ & \textit{55.16} & $2^{20} 3^{18} 5^{9}$ & 4 &  $0.67_{\twnull}$ & 21.54 & 0\\
36T766 & $9$ & $i$ & $[-6, 2]$ & $\mathit{18.84_{g}}$ & 58.55 & $2^{36} 7^{6}$ & 3 &  $0.89_{\twnull}$ & 59.95 & 2\\
\hline
$S_4\wr C_2$ & 1152 &  & 20, -- &  & 35.05 & $2^*5^*41^*$& &  & 150 & 23694\\
12T200 & $6$ &  & $[-2, 4]$ & $\mathit{7.60_{p}}$ & \textit{15.62} & $5^{3} 11^{2} 31^{2}$ & 20668 &  $0.33_\twbad$ & 5.31 & 0\\
8T47 & $6$ &  & $[-2, 4]$ & $7.60_{p}$ & 10.51 & $2^{9} 2633$ & 20566 &  $0.33_{\twnull}$ & 5.31 & 0\\
12T201 & $6$ &  & $[-4, 3]$ & $\mathit{7.60_{p}}$ & \textit{19.19} & $3^{7} 151^{2}$ & 21 &  $0.50_{\twnull}$ & 12.25 & 0\\
12T202 & $6$ &  & $[-4, 3]$ & $\mathit{7.60_{p}}$ & \textit{16.51} & $3^{10} 7^{3}$ & 12 &  $0.50_{\twnull}$ & 12.25 & 0\\
18T272 & $9$ &  & $[-3, 3]$ & $\mathit{9.70_{p}}$ & 19.73 & $3^{6} 853^{3}$ & 450 &  $0.67_{\twnull}$ & 28.23 & 44\\
18T274 & $9$ &  & $[-3, 3]$ & $15.95_{p}$ & 27.07 & $2^{12} 5^{7} 29^{3}$ & 105 &  $0.67_{\twnull}$ & 28.23 & 3\\
18T273 & $9$ &  & $[-3, 3]$ & $\mathit{12.88_{p}}$ & \textit{30.86} & $2^{16} 3^{18}$ & 5 &  $0.67_\twbad$ & 28.23 & 0\\
16T1294 & $9$ &  & $[-3, 3]$ & $10.60_{p}$ & 13.16 & $43^{3} 53^{3}$ & 39 &  $0.67_{\twnull}$ & 28.23 & 295\\
36T1946 & $12$ &  & $[-4, 4]$ & $\mathit{14.77_{p}}$ & \textit{32.80} & $2^{10} 13^{7} 17^{6}$ & 16 &  $0.67_{\twnull}$ & 28.23 & 0\\
24T2821 & $12$ &  & $[-4, 4]$ & $\mathit{16.03_{p}}$ & 26.48 & $2^{16} 5^{6} 41^{5}$ & 1 &  $0.67_{\twnull}$ & 28.23 & 1\\
36T1758 & $18$ &  & $[-6, 2]$ & $\mathit{20.29_{g}}$ & 36.08 & $2^{24} 5^{9} 41^{9}$ & 1 &  $0.89_{\twnull}$ & 85.96 & 1222\\
\end{tabular}
\end{table}
\begin{table}[htbp] \centering \caption{\label{tablelabel7}Artin $L$-functions of small conductor from nonic groups}
\begin{tabular}{l@{\;}r@{\;}c@{\;}c@{\;}|@{\;}r@{\;}r@{\;}c@{}r@{\;}|@{\;}c@{\;}r@{\;}r}
$G$& $n_1$ & $z$ & $\Imnaught$ & \multicolumn{1}{c}{$\mathfrak{d}$} & \multicolumn{1}{c}{$\delta_1$} & $\Delta_1$ & pos'n & $\beta$ & $B^\beta$ & \# \\
\hline
$C_9$ & 9 &  TW  & 3, 4 &  & 13.70 & $19^*$& &  & 200 & 48\\
9T1 & $1$ & $\zeta_{9}$ & $[-3, 0]$ & $17.02_{Q}$ & 19.00 & $19$ & 1 &  $1.13_\twgood$ & 387.85 & 26\\
\hline
$D_9$ & 18 &  TW  & 4, 8 &  & 12.19 & $2^*59^*$& &  & 200 & 699\\
9T3 & $2$ & $\zeta_9^+$ & $[-3, 0]$ & $9.70_{q}$ & 14.11 & $199$ & 3 &  $1.00_{\twnull}$ & 200.00 & 638\\
\hline
$3^{1+2}_{-}$ & 27 &  & 6, 12 &  & 31.18 & $7^*13^*$& &  & 200 & 32\\
9T6 & $3$ & $\sqrt{-3}$ & $[-3, 0]$ & $30.32_{q}$ & 38.70 & $7^{3} 13^{2}$ & 1 &  $1.00_{\twnull}$ & 200.00 & 14\\
\hline
$3^{1+2}_{+}$ & 27 &  TW  & 6, 12 &  & 50.20 & $3^*19^*$& &  & 200 & 16\\
9T7 & $3$ & $\sqrt{-3}$ & $[-3, 0]$ & $30.32_{q}$ & 64.08 & $3^{6} 19^{2}$ & 1 &  $1.00_{\twnull}$ & 200.00 & 12\\
\hline
$3^{1+2}.2$ & 54 &  & 7, 34 &  & 17.01 & $2^*3^*$& &  & 200 & 981\\
9T10 & $6$ &  & $[-3, 0]$ & $15.90_{q}$ & 17.49 & $31^{5}$ & 2 &  $1.00_{\twnull}$ & 200.00 & 741\\
\hline
 & 54 &  & 7, 34 &  & 16.83 & $3^*7^*$& &  & 200 & 880\\
9T11 & $6$ &  & $[-3, 0]$ & $15.90_{q}$ & 19.01 & $3^{9} 7^{4}$ & 1 &  $1.00_{\twnull}$ & 200.00 & 805\\
\hline
 & 54 &  & 8, 42 &  & 16.72 & $2^*3^*5^*$& &  & 200 & 2637\\
9T12 & $3$ & $\sqrt{-3}$ & $[-3, 2]$ & $7.75_{p}$ & 10.71 & $2^{2} 307$ & 7 &  $0.67_{\twnull}$ & 34.20 & 256\\
18T24 & $3$ & $\sqrt{-3}$ & $[-3, 1]$ & $\mathit{10.03_{g}}$ & 20.08 & $2^{2} 3^{4} 5^{2}$ & 1 &  $0.83_\twbad$ & 82.70 & 77\\
\hline
$M_9$ & 72 &  & 6, 20 &  & 29.72 & $2^*3^*$& &  & 100 & 27\\
9T14 & $8$ &  & $[-1, 0]$ & $17.50_{q}$ & 31.59 & $2^{24} 3^{10}$ & 1 &  $1.00_{\twnull}$ & 100.00 & 26\\
\hline
$C_3^2{\rtimes}C_8$ & 72 &  TW  & 5, 16 &  & 25.41 & $2^*3^*$& &  & 100 & 19\\
9T15 & $8$ &  & $[-1, 0]$ & $17.50_{q}$ & 25.41 & $2^{31} 3^{4}$ & 1 &  $1.00_{\twnull}$ & 100.00 & 16\\
\hline
$C_3\wr C_3$ & 81 &  & 9, 59 &  & 75.41 & $3^*19^*$& &  & 500 & 131\\
9T17 & $3^{\rlap{\scriptsize{3}}}$ & $\sqrt{-3}$ & $[-3, 3]$ & $12.92_{q}$ & 30.14 & $7^{2} 13\!\cdot\!  43$ & 22 &  $0.50_{\twnull}$ & 22.36 & 0\\
\hline
$C_3^2{\rtimes}D_6$ & 108 &  & 11, 262 &  & 22.06 & $3^*23^*$& &  & 150 & 12002\\
18T55 & $6$ &  & $[-3, 1]$ & $\mathit{11.98_{g}}$ & 24.38 & $2^{8} 3^{8} 5^{3}$ & 4 &  $0.83_\twbad$ & 65.07 & 229\\
9T18 & $6$ &  & $[-3, 2]$ & $9.70_{p}$ & 13.46 & $3^{3} 7^{2} 67^{2}$ & 53 &  $0.67_{\twnull}$ & 28.23 & 216\\
\hline
$C_3^2{\rtimes}QD_{16}$ & 144 &  & 8, 62 &  & 23.41 & $3^*7^*$& &  & 100 & 488\\
9T19 & $8$ &  & $[-1, 2]$ & $\mathit{12.13_{v}}$ & 25.65 & $3^{13} 7^{6}$ & 1 &  $0.75_{\twnull}$ & 31.62 & 14\\
18T68 & $8$ &  & $[-2, 1]$ & $\mathit{14.44_{g}}$ & 25.65 & $3^{13} 7^{6}$ & 1 &  $0.88_\twbad$ & 56.23 & 48\\
\hline
$C_3\wr S_3$ & 162 &  & 13, 2004 &  & 29.89 & $3^*$& &  & 200 & 1617\\
9T20 & $3^{\rlap{\scriptsize{3}}}$ & $\sqrt{-3}$ & $[-3, 3]$ & $7.75_{p}$ & 11.17 & $7\!\cdot\!  199$ & 23 &  $0.50_{\twnull}$ & 14.14 & 20\\
18T86 & $3^{\rlap{\scriptsize{3}}}$ & $\sqrt{-3}$ & $[-3, 3]$ & $\mathit{8.23_{v}}$ & \textit{19.48} & $2^{2} 43^{2}$ & 5 &  $0.50_{\twnull}$ & 14.14 & 0\\
\hline
 & 162 &  & 11, 223 &  & 24.90 & $2^*3^*5^*$& &  & 100 & 597\\
9T21 & $6^{\rlap{\scriptsize{3}}}$ &  & $[-3, 3]$ & $9.70_{p}$ & 15.58 & $5^{2} 83^{3}$ & 2 &  $0.50_{\twnull}$ & 14.14 & 0\\
\hline
 & 162 &  & 10, 205 &  & 26.46 & $3^*$& &  & 100 & 180\\
9T22 & $6^{\rlap{\scriptsize{3}}}$ &  & $[-3, 3]$ & $9.70_{p}$ & 17.21 & $2^{6} 7^{4} 13^{2}$ & 6 &  $0.50_{\twnull}$ & 10.00 & 0\\
\hline
$C_3^2{\rtimes}\textrm{SL}_2(3)$ & 216 &  & 7, 44 &  & 49.57 & $349^*$& &  & 100 & 37\\
9T23 & $8$ &  & $[-1, 2]$ & $11.29_{p}$ & 23.39 & $547^{4}$ & 10 &  $0.75_{\twnull}$ & 31.62 & 3\\
24T569 & $8$ & $\sqrt{-3}$ & $[-2, 1]$ & $18.99_{g}$ & 38.84 & $349^{5}$ & 1 &  $0.94_{\twnull}$ & 74.99 & 20\\
\hline
 & 324 &  & 17, -- &  & 30.64 & $2^*3^*11^*$& &  & 100 & 1816\\
18T129 & $6^{\rlap{\scriptsize{3}}}$ &  & $[-3, 3]$ & $\mathit{9.34_{p}}$ & \textit{22.88} & $2^{9} 23^{4}$ & 16 &  $0.50_{\twnull}$ & 14.14 & 0\\
9T24 & $6^{\rlap{\scriptsize{3}}}$ &  & $[-3, 3]$ & $9.70_{p}$ & 15.84 & $3^{7} 5^{2} 17^{2}$ & 399 &  $0.50_{\twnull}$ & 14.14 & 0\\
\end{tabular}
\end{table}
\begin{table}[htbp] \centering \caption{\label{tablelabel8}Artin $L$-functions of small conductor from nonic groups}
\begin{tabular}{l@{\;}r@{\;}c@{\;}c@{\;}|@{\;}r@{\;}r@{\;}c@{}r@{\;}|@{\;}c@{\;}r@{\;}r}
$G$& $n_1$ & $z$ & $\Imnaught$ & \multicolumn{1}{c}{$\mathfrak{d}$} & \multicolumn{1}{c}{$\delta_1$} & $\Delta_1$ & pos'n & $\beta$ & $B^\beta$ & \# \\
\hline
 & 324 &  & 9, 116 &  & 29.96 & $2^*3^*$& &  & 100 & 107\\
9T25 & $6$ &  & $[-3, 3]$ & $12.73_{p}$ & 22.25 & $2^{6} 3^{8} 17^{2}$ & 59 &  $0.50_{\twnull}$ & 10.00 & 0\\
18T141 & $6$ &  & $[-3, 3]$ & $\mathit{9.34_{p}}$ & \textit{30.81} & $3^{8} 19^{4}$ & 9 &  $0.50_{\twnull}$ & 10.00 & 0\\
12T133 & $4$ & $\sqrt{-3}$ & $[-4, 2]$ & $11.15_{g}$ & 19.34 & $2^{6} 3^{7}$ & 1 &  $0.75_{\twnull}$ & 31.62 & 10\\
12T132 & $4^{\rlap{\scriptsize{2}}}$ & $\sqrt{-3}$ & $[-4, 2]$ & $\mathit{11.15_{g}}$ & \textit{33.50} & $2^{6} 3^{9}$ & 1 &  $0.75_{\twnull}$ & 31.62 & 0\\
18T142 & $12$ &  & $[-3, 3]$ & $19.68_{p}$ & 30.57 & $2^{18} 3^{26}$ & 3 &  $0.75_{\twnull}$ & 31.62 & 2\\
\hline
$C_3^2{\rtimes}\textrm{GL}_2(3)$ & 432 &  & 10, 206 &  & 27.88 & $3^*11^*$& &  & 76 & 453\\
9T26 & $8$ &  & $[-1, 2]$ & $11.54_{g}$ & 17.59 & $2^{6} 523^{3}$ & 14 &  $0.75_{\twnull}$ & 25.74 & 17\\
18T157 & $8$ &  & $[-2, 2]$ & $\mathit{12.14_{p}}$ & 19.04 & $3^{7} 53^{4}$ & 16 &  $0.75_{\twnull}$ & 25.74 & 3\\
24T1334 & $16$ &  & $[-2, 1]$ & $21.27_{g}$ & 26.68 & $3^{26} 11^{10}$ & 1 &  $0.94_{\twnull}$ & 57.98 & 134\\
\hline
$S_3\wr C_3$ & 648 &  & 14, 3706 &  & 33.56 & $2^*5^*13^*$& &  & 150 & 1677\\
18T197 & $6^{\rlap{\scriptsize{2}}}$ &  & $[-3, 3]$ & $\mathit{9.70_{v}}$ & \textit{19.71} & $7^{4} 29^{3}$ & 1 &  $0.50_{\twnull}$ & 12.25 & 0\\
18T202 & $6$ &  & $[-4, 3]$ & $\mathit{9.70_{v}}$ & \textit{27.73} & $2^{6} 3^{9} 19^{2}$ & 49 &  $0.50_{\twnull}$ & 12.25 & 0\\
9T28 & $6$ &  & $[-3, 4]$ & $9.70_{p}$ & 12.20 & $3^{8} 503$ & 335 &  $0.33_{\twnull}$ & 5.31 & 0\\
12T176 & $8$ &  & $[-4, 2]$ & $12.05_{g}$ & 21.75 & $11^{4} 43^{4}$ & 4 &  $0.75_{\twnull}$ & 42.86 & 57\\
36T1102 & $12$ &  & $[-4, 3]$ & $\mathit{15.58_{v}}$ & 29.90 & $2^{6} 5^{10} 13^{8}$ & 1 &  $0.75_{\twnull}$ & 42.86 & 2\\
18T206 & $12$ &  & $[-3, 4]$ & $15.23_{v}$ & 21.99 & $2^{6} 7^{10} 29^{4}$ & 10 &  $0.67_{\twnull}$ & 28.23 & 8\\
24T1539 & $8$ & $\sqrt{-3}$ & $[-8, 4]$ & $\mathit{17.07_{v}}$ & \textit{45.71} & $2^{14} 3^{19}$ & 3 &  $0.75_{\twnull}$ & 42.86 & 0\\
\hline
 & 648 &  & 13, 2206 &  & 40.81 & $2^*3^*17^*$& &  & 200 & 838\\
9T29 & $6$ &  & $[-3, 3]$ & $12.73_{p}$ & 16.62 & $2^{8} 7^{2} 41^{2}$ & 31 &  $0.50_{\twnull}$ & 14.14 & 0\\
18T223 & $6$ &  & $[-3, 3]$ & $\mathit{9.70_{v}}$ & \textit{30.14} & $2^{4} 5^{2} 37^{4}$ & 71 &  $0.50_{\twnull}$ & 14.14 & 0\\
24T1527 & $4$ & $\sqrt{-3}$ & $[-4, 2]$ & $12.05_{g}$ & 32.34 & $2^{2} 3^{7} 5^{3}$ & 18 &  $0.75_{\twnull}$ & 53.18 & 43\\
12T175 & $4$ & $\sqrt{-3}$ & $[-4, 4]$ & $7.60_{p}$ & 9.23 & $11\!\cdot\!  659$ & 164 &  $0.50_{\twnull}$ & 14.14 & 14\\
36T1131 & $6$ & $\sqrt{-3}$ & $[-6, 6]$ & $\mathit{11.95_{v}}$ & \textit{36.04} & $2^{2} 3^{8} 17^{4}$ & 6 &  $0.50_{\twnull}$ & 14.14 & 0\\
36T1237 & $12$ &  & $[-3, 3]$ & $\mathit{17.78_{p}}$ & 44.72 & $2^{22} 3^{7} 17^{8}$ & 1 &  $0.75_{\twnull}$ & 53.18 & 6\\
18T219 & $12$ &  & $[-3, 3]$ & $\mathit{14.05_{v}}$ & 33.23 & $3^{17} 107^{5}$ & 5 &  $0.75_{\twnull}$ & 53.18 & 65\\
24T1540 & $8$ & $\sqrt{-3}$ & $[-8, 4]$ & $\mathit{17.07_{v}}$ & 49.37 & $2^{10} 3^{15} 7^{4}$ & 2 &  $0.75_{\twnull}$ & 53.18 & 1\\
\hline
 & 648 &  & 13, 1322 &  & 30.37 & $2^*269^*$& &  & 200 & 4001\\
9T30 & $6$ &  & $[-3, 3]$ & $9.70_{p}$ & 10.67 & $11^{2} 23^{3}$ & 3 &  $0.50_{\twnull}$ & 14.14 & 1\\
18T222 & $6$ &  & $[-3, 3]$ & $\mathit{9.70_{v}}$ & \textit{23.27} & $31^{3} 73^{2}$ & 57 &  $0.50_{\twnull}$ & 14.14 & 0\\
12T178 & $8$ &  & $[-4, 2]$ & $12.05_{g}$ & 18.27 & $2^{10} 59^{4}$ & 10 &  $0.75_{\twnull}$ & 53.18 & 327\\
12T177 & $8^{\rlap{\scriptsize{2}}}$ &  & $[-4, 2]$ & $\mathit{12.05_{g}}$ & 24.98 & $2^{10} 23^{6}$ & 22 &  $0.75_{\twnull}$ & 53.18 & 173\\
36T1121 & $6$ & $\sqrt{3}$ & $[-6, 6]$ & $\mathit{11.95_{v}}$ & \textit{31.61} & $2^{8} 7^{2} 43^{3}$ & 91 &  $0.50_{\twnull}$ & 14.14 & 0\\
36T1123 & $12$ &  & $[-3, 3]$ & $\mathit{17.78_{p}}$ & 28.87 & $2^{35} 5^{10}$ & 2 &  $0.75_{\twnull}$ & 53.18 & 71\\
18T218 & $12$ &  & $[-3, 3]$ & $14.05_{v}$ & 18.33 & $2^{10} 269^{5}$ & 1 &  $0.75_{\twnull}$ & 53.18 & 453\\
\hline
$S_3 \wr S_3$ & 1296 &  & 22, -- &  & 36.26 & $2^*3^*$& &  & 200 & 12152\\
18T320 & $6$ &  & $[-3, 4]$ & $\mathit{8.80_{p}}$ & \textit{14.80} & $5\!\cdot\!  23^{3} 173$ & 8562 &  $0.33_{\twnull}$ & 5.85 & 0\\
18T312 & $6$ &  & $[-4, 3]$ & $\mathit{8.79_{p}}$ & \textit{17.45} & $3^{5} 11^{2} 31^{2}$ & 343 &  $0.50_{\twnull}$ & 14.14 & 0\\
9T31 & $6$ &  & $[-3, 4]$ & $9.70_{p}$ & 10.38 & $31^{2} 1303$ & 10036 &  $0.33_{\twnull}$ & 5.85 & 0\\
18T303 & $6$ &  & $[-4, 3]$ & $\mathit{8.79_{p}}$ & \textit{18.34} & $5^{5} 23^{3}$ & 4 &  $0.50_{\twnull}$ & 14.14 & 0\\
12T213 & $8$ &  & $[-4, 4]$ & $11.29_{p}$ & 13.38 & $5^{2} 7^{4} 131^{2}$ & 397 &  $0.50_{\twnull}$ & 14.14 & 3\\
24T2895 & $8$ &  & $[-4, 2]$ & $\mathit{12.79_{g}}$ & 30.65 & $2^{8} 3^{4} 5^{6} 7^{4}$ & 77 &  $0.75_{\twnull}$ & 53.18 & 230\\
18T315 & $12$ &  & $[-3, 4]$ & $13.59_{p}$ & 22.81 & $2^{10} 23^{4} 37^{5}$ & 72 &  $0.67_{\twnull}$ & 34.20 & 105\\
36T2216 & $12$ &  & $[-3, 4]$ & $\mathit{13.79_{p}}$ & 29.13 & $2^{10} 3^{17} 41^{4}$ & 78 &  $0.67_{\twnull}$ & 34.20 & 11\\
36T2305 & $12$ &  & $[-4, 3]$ & $\mathit{15.14_{p}}$ & 31.73 & $2^{18} 331^{5}$ & 58 &  $0.75_{\twnull}$ & 53.18 & 151\\
36T2211 & $12$ &  & $[-6, 6]$ & $\mathit{14.64_{p}}$ & \textit{37.29} & $2^{16} 5^{8} 7^{10}$ & 55 &  $0.50_{\twnull}$ & 14.14 & 0\\
36T2214 & $12$ &  & $[-4, 3]$ & $\mathit{15.14_{p}}$ & 32.07 & $2^{22} 3^{24}$ & 11 &  $0.75_{\twnull}$ & 53.18 & 136\\
24T2912 & $16$ &  & $[-8, 4]$ & $\mathit{18.82_{p}}$ & 35.12 & $5^{12} 23^{12}$ & 4 &  $0.75_{\twnull}$ & 53.18 & 40\\
\end{tabular}
\end{table}

\subsubsection{Top row of the $G$-block.}
The top row in the $G$-block is different from the other rows, as it
gives information corresponding to  
the abstract group $G$.  Instead of referring to a faithful
irreducible character, as the  
other lines do, many of its entries are the corresponding quantities
for the regular character 
$\phi_G$.  The first four entries are a common name for the group
$G$ (if there is one), the order 
$\phi_G(e) = |G|$,  the symbol TW if $G$ is known to have the
universal tame-wild property as defined in 
\cite{jr-tame-wild},
and finally $k,N$.
Here, $k$ is the size of the rational 
character table, and $N$ is number of vertices of the polytope $P_G$ discussed
in \S\ref{polytope-pg}, or a dash if we did not compute $N$.  
The last four entries are the
smallest root discriminant of a Galois $G$ field, the factored form of
the corresponding discriminant, 
a cutoff $B$ for which the set $\cK(G;B)$ is known, and the 
size $|\cK(G;B)|$.  
  
  \subsubsection{Remaining rows of the $G$-block}    
  \label{remainingrows}
  Each remaining line of the $G$-block   
 corresponds to a type $(G,\chi)$.  
 However the number of rows in the $G$-block is typically substantially less than the 
number of faithful irreducible characters of $G$, as we list only one
representative of each $\Gal(\overline{\Q}/\Q) \times \mbox{Out}(G)$ orbit 
of such characters.   As an example, $S_6$ has eleven characters, all rational.  
Of the nine which are faithful, there are three which are fixed by the 
nontrivial element of $\mbox{Out}(S_6)$ and the others form three
two-element orbits.  Thus the $S_6$-block has six rows.   
In general, the information on a $(G,\chi)$ row comes in three parts,
which we now describe in turn.

 {\em First four columns.}   
The first column gives the lexicographically first permutation group
$mTj$ for which the corresponding permutation character has $\chi$
as a rational constituent.  
Then $n_1=\chi_1(e)$ is the degree of an absolutely irreducible
character $\chi_1$ such that $\chi$ is the 
sum of its conjugates.  The number $n_1$ is superscripted by the size
of the $\Out(G)$ orbit of 
$\chi$, in the unusual case when this orbit size is not $1$.  
Next, the complex number $z$ is a generator for the field generated by
the values of the character $\chi_1$, 
with no number printed in the common case that $\chi_1$ is rational-valued.  
The last entry gives the interval $\Imnaught$, where
$\widecheck{\chi}$ and $\widehat{\chi}$
are the numbers  introduced in the beginning of Section~\ref{choices}.  
 In the range presented, the data of $mTj$, $n_1$, $z$, and $\Imnaught$
 suffice to distinguish Galois types $(G,\chi)$ from each other.

{\em Middle four columns.}   The next four columns focus on 
 minimal root conductors.  In the first entry, $\frak d$ is the best 
 conditional lower bound we obtained for root conductors,
and the subscript $i \in \{\ell,s,q,g,p,v\}$ gives information on
   the corresponding auxiliary character $\phi$.  The first four possibilities refer to the methods of
   Section~\ref{choices}, namely {\em linear}, {\em square}, {\em quadratic},
   and {\em Galois}.  The last two, $p$ and $v$, indicate a {\em permutation}
   character and a character coming from a {\em vertex} of the polytope $P_G$.  
   The best $\phi$ of the ones  
   we inspect is always at a vertex, except in the three cases on
   Table~\ref{tablelabel2} 
   where $*$ is appended to the subscript.  
  Capital letters
   $S$, $Q$, $G$, $P$, and $V$ also appear as subscripts.  
   These occur only for groups marked with TW, and indicate
   that the tame-wild principle improved the lower bound.   
   For most groups with fifteen or more classes, it was prohibitive
   to calculate all vertices, and the best of the other methods
   is indicated.

When the second entry is in roman type, it is 
the minimal root conductor and
the third entry is the minimal conductor in factored form.
When the second entry is in italic type, then it is the smallest
currently known root conductor.
The fourth entry gives the position of the source number field on the 
complete list ordered by Galois root discriminant.  This information
lets readers obtain further information from \cite{jr-global-database},
such as a defining polynomial and details on ramification.

  {\em Last three columns.}  The quantity $\beta$ is 
   the exponent we are using to pass from Galois number fields to Artin representations.
   Writing $\walp = \walp(G,\chi,\phi_G)$
   and $\alp = \alp(G,\chi,\phi_G)$,
   one has the universal relation $\walp \leq \alp$. 
   When equality holds then the common number is printed.   
   To indicate that inequality holds, an extra symbol is printed.
   When we know that $G$ satisfies TW then we can use larger
   exponent and $\alp_\twgood$ is printed.  
   Otherwise we use the smaller exponent and $\walp_\twbad$ is printed.  
    The column
   $B^\beta$ gives the corresponding upper bound on our
   complete list of root conductors.  Finally 
   the column $\#$ gives $|\cL(G,\chi; B^\beta)|$,
   the length of the complete list of Artin $L$-functions
   we have identified.   For the $L$-functions themselves,
   we refer to \cite{LMFDB}.
   
   \section{Discussion of tables}
   \label{discussion}
   In this section, we discuss four topics, each of which makes 
   specific reference to parts of the tables of the 
   previous section.  Each of the topics also serves the
   general purpose of making the 
   tables more readily understandable.   
   
\subsection{Comparison of first Galois root discriminants and root conductors}  
   Suppose first, for notational simplicity, that $G$ is a group for which
 all irreducible complex characters take rational values only.   
 When one fixes $K^{\rm gal}$ with $\Gal(K^{\rm gal}/\Q) \cong G$
  and lets $\chi$ runs over all the irreducible characters of 
$G$, the root discriminant $\delta_{\rm Gal}$ is just the 
weighted multiplicative average $\prod_\chi = {\delta_\chi^{\chi(e)^2/|G|}}$. 
 Deviation of a root conductor $\delta_\chi$ from $\delta_{\rm Gal}$ is
 caused by nonzero values of $\chi$.    
 When $\chi(e)$ is large and
$\Imnaught$ is small,  $\delta_\chi$ 
 is necessarily close to $\delta_{\rm Gal}$.   One
 can therefore think  generally of $\delta_{\rm Gal}$ as a first 
 approximation to $\delta_{\rm \chi}$.   The general principle
 of $\delta_{\rm Gal}$ approximating $\delta_{\rm \chi}$ 
 applies to groups $G$ with irrational characters as well.     
  
 Our first example of $S_5$ illustrates both how the principle
  $\delta_{\rm Gal} \approx \delta_{\chi}$ is reflected in the tables, 
  and how it tends to be somewhat off in the direction that 
  $\delta_{\rm Gal} > \delta_{\chi}$.   
 For a given $K^{\rm gal}$, the variance of its $\delta_\chi$ about its
 $\delta_{\rm Gal}$ is substantial and depends on the details
 of the ramification in $K^{\rm gal}$.    There are many $K^{\rm gal}$ 
 with root discriminant near the minimal root discriminant, all of which
 are possible sources of minimal root conductors.   It
 is therefore expected that the minimal conductors $\delta_1(S_5,\chi) = \min \delta_\chi$
 printed in the table, $6.33$, $18.72$, $16.27$, $17.78$, and $18.18$,
   are substantially less than the printed 
 minimal root discriminant $\delta_1(S_5,\phi_{120}) \approx 24.18$.   As groups $G$
 get larger, one can generally expect tighter clustering 
 of the $\delta_1(G,\chi)$ about $\delta_1(G,\phi_G)$.   One can see
 the beginning of this trend in our partial results for
 $S_6$ and $S_7$.

  \subsection{Known and unknown minimal root conductors}  Our method of starting with a complete list of 
 Galois fields is motivated by the principle from the previous
 subsection that the Galois root discriminant $\delta_{\rm Gal}$ is 
 a natural first approximation to $\delta_\chi$.   Indeed, as the tables show via nonzero entries in the $\#$
 column, this general method suffices to obtain a non-empty initial segment for most $(G,\chi)$.  
 As our focus is primarily on the first root conductor $\delta_1 = \delta_1(G,\chi)$,  we do not pursue larger
 initial segments in these cases.
  
 When the initial segment from our general method is empty, as reported by a $0$ in the $\#$ 
 column, we aim to nonetheless present the minimal root conductor $\delta_1$.  
 Suppose there are subgroups $H_m \subset H_k  \subseteq G$, of the 
 indicated indices, such that a multiple of the character $\chi$ of interest
 is a difference of the corresponding permutation characters: 
$c \chi = \phi_m - \phi_k$.
 Suppose one has the complete list of all degree $m$ fields
 corresponding to the permutation representation of $G$ on $G/H_m$ and 
 root discriminant $\leq B$.  Then one can extract the complete
 list of $\cL(G,\chi;B^{m/(m-k)})$ of desired Artin $L$-functions. 

For example, consider $\chi_5$, the absolutely irreducible $5$-dimensional character of
$A_6$.  The permutation character for a corresponding sextic field
decomposes $\phi_6 = 1+\chi_5$, and so the discriminant of the sextic
field equals the conductor of $\chi_5$.  As an example with $k>1$, 
 consider the  $6$-dimension character $\chi_6$ for
$C_3\wr C_3 = 9T17$, which is the sum of a three-dimensional character
and its conjugate.    The nonic field has a cubic subfield, and the
characters are related by $\phi_9 = \phi_3 + \chi_6$.  In terms of conductors, $D_9 = D_3 \cdot
D_{\chi_6}$, where $D_9$ and $D_3$ are field discriminants.  So, we
can determine the minimal conductor of an $L$-function with type $(C_3\wr C_3,\chi_6)$  from 
a sufficiently long complete list of nonic number fields with Galois group $C_3\wr C_3$.

 This method, applied to both old and newer lists presented in \cite{jr-global-database},
 accounts for all but one of the $\delta_1$ reported
 in Roman type on the same line as a $0$ in the $\#$ column.  
  The remaining case of an established $\delta_1$ is for the type $(\GL_3(2),\chi_7)$.  The
 group $\GL_3(2)$ appears on our tables as $7T5$.   The permutation 
 representation $8T37$ has character $\chi_7+1$.     Here
 the general method says that $\cL(\GL_3(2),\chi_7; 26.12)$ is empty.  
 It is prohibitive to compute the first octic discriminant by 
 searching among octic polynomials.   In \cite{jr-psl27} we carried out a long search of septic polynomials,
 examining all local possibilities giving an octic discriminant at most $30$.   
 This computation shows that $|\cL(\GL_3(2),\chi_7; 48.76)| = 25$ and in particular identifies
 $\delta_1 = 21^{8/7} \approx 32.44$.   
 
      The complete lists of Galois fields for a group first appearing in degree $m$ were likewise computed
 by searching polynomials in degree $m$, targeting for small $\delta_{\rm Gal}$.  This single
 search can give many first root conductors at once.  For example, the largest groups
 on our octic and nonic tables are $S_4 \wr S_2 = 8T47$  and $S_3 \wr S_3 = 9T31$.  
 In these cases, minimal root conductors were obtained for $5$ of the $10$ and 
 $7$ of the $12$ faithful $\chi$ respectively.   Searches adapted to a particular character 
 $\chi$ as in the previous paragraph 
 can be viewed as a refinement of our method, with targeting being 
 not for small $\delta_{\rm Gal}$ but instead for small $\delta_\chi$.  
 Many of the italicized entries in the column $\delta_1$ seem 
 improvable to known minimal root conductors via this refinement.

\subsection{The ratio $\delta_1/{\frak d}$}
In all cases on the table, $\delta_1>{\frak d}$. 
Thus, as expected, we did not encounter a contradiction to 
the Artin conjecture or the Riemann hypothesis. 
In some cases on the table, the ratio $\delta_1/{\frak d}$ is quite
close to $1$.
As two series of examples, consider $S_m$ with its reflection character
$\chi_{m-1} = \phi_m-1$, and $D_m$ and the sum $\chi$ of all its 
faithful $2$-dimensional characters.
Then these ratios are as  follows:
 \[
 \begin{array}{r|llllllll}
 m & \;\; 2 & \;\; 3 & \; \;4 & \; \; 5 & \; \;  6 & \; \; 7 & \; \;8 & \; \;9  \\
 \hline
 \delta_1/{\frak d}\mbox{ for } (S_m,\chi_{m-1}) & 1.00 & 1.02 & 1.2 & 1.005 & 1.1 & 1.007 \\
\delta_1/{\frak d} \mbox{ for } (D_m,\chi) &      &    &  1.09 &  1.02 & 1.23 & 1.006 &  1.07 & 1.45 \\
\end{array}
 \]
 In the cases with the smallest ratios, 
 the transition
 from no $L$-functions to many $L$-functions is commonly abrupt.  For example, 
 in the case $(S_7,\chi_6)$ the lower bound is ${\frak d} \approx 7.50$ and the first seven
 rounded root conductors are $7.55$, $7.60$, $7.61$, $7.62$, $7.64$, $7.66$, and $7.66$.
 
 When the translation from no $L$-functions to many $L$-functions is not 
 abrupt, but there is an $L$-function with outlyingly small conductor, again
 $\delta_1/{\frak d}$ may be quite close to $1$.  As an example,  
 for $(8T25,\chi_7)$, one has ${\frak d} \approx 16.10$ and $\delta_1 = 29^{6/7} \approx 17.93$
 yielding $\delta_1/{\frak d} \approx 1.11$.    However in this
 case the next root conductor is $\delta_2 = 113^{6/7} \approx 57.52$, yielding 
 $\delta_2/{\frak d} \approx 3.57$.  Thus the close agreement is entirely dependent
 on the $L$-function with outlyingly small conductor.   Even the second
 root conductor is somewhat of an outlier as the next three conductors
 are  $71.70$, $76.39$, and $76.39$, so that already $\delta_3/{\frak d} \approx  4.45$.  
 
 There are many $(G,\chi)$ on the table for which the ratio $\delta_1/{\frak d}$ is
 around $2$ or $3$. 
 There is some room for improvement in our analytic lower bounds,  
for example changing the test function \eqref{Odlyzko}, varying $\phi$ over
all of $P_G$, or replacing the exponent $\walp$ with
the best possible exponent $b$.   However examples like the one
in the previous paragraph suggest to us that in many cases the
resulting increase in $\frak d$ towards $\delta_1$ would be very small.

 \subsection{Multiply minimal fields}   
Tables~\ref{tablelabel1}--\ref{tablelabel8} make implicit reference to many
 Galois number fields, and all necessary complete lists are accessible on 
 the database \cite{jr-global-database}.   Table~\ref{multmin} presents
 a small excerpt from this database  by giving six polynomials $f(x)$.  For each $f(x)$,
 Table~\ref{multmin} first gives the Galois group $G$ and the root discriminant $\delta$ of the 
 splitting field $K^{\rm gal}$.   We are highlighting these particular 
Galois number fields $K^{\rm gal}$ here
 because  they are {\em multiply minimal}: they each give rise to the minimal root conductor for at least 
 two different rationally irreducible characters $\chi$.    The 
 degrees of these characters are given in the last column of Table~\ref{multmin}.  
  \begin{table}[htb]
 \[
 {\renewcommand{\arraycolsep}{4pt}
 \begin{array}{crclll}
 G &  & \!\! \delta \!\!  & & \mbox{Polynomial} & \chi(e)  \\
 \hline
 A_5 & 2^{3/2} 17^{2/3} & \!\! \approx \!\! & 18.70 &  x^5-x^4+2 x^2-2 x+2  &4,6 \\
 A_6 & 2^{13/6} 3^{16/9}                         & \!\! \approx \!\! &   31.66               & x^6-3 x^4-12 x^3-9 x^2+1 &9,10,16 \\
 S_6 & 11^{1/2} 41^{2/3} & \approx & 39.44 & x^6 - x^5 - 4x^4 + 6 x^3 - 6x + 5 & 5,10 \\
 \SL_2(3) & 163^{2/3} &  \!\! \approx \!\!  & 29.83 &  x^8+9 x^6+23 x^4+14 x^2+1 & 2,4  \\ 
 8T47 & 2^{31/24} 5^{1/2} 41^{1/2} &  \!\! \approx \!\! & 35.05 & x^8-2 x^7+6 x^6-2 x^5+26 x^4&12,18\\
         &   & &     & \; -24 x^3-24 x^2+16 x+4\\
 9T19 & 3^{37/24} 7^{3/4} & \!\! \approx \!\!  & 23.41 & x^9-3 x^8-3 x^7+12 x^6-21 x^5 &8,8\\
           &                                          &           &            & \;+ 36 x^4-48 x^3+45 x^2-24 x+7 \\
\end{array} 
}
 \]
 \caption{\label{multmin}  Invariants and defining polynomials for Galois
 number fields giving rise to minimal root discriminants for 
 at least two rationally irreducible characters $\chi$ }
 \end{table}
 
 Further information on the characters $\chi$ is given in Tables \ref{tablelabel1}--\ref{tablelabel8}.
 An interesting point, evident from repeated $1$'s in the $G$-block on these tables, 
 is that five of the six fields $K^{\rm gal}$ are also first on the list of $G$ fields 
 ordered by root discriminant.  The exception is 
 the $S_6$ field on Table~\ref{multmin}, which is only sixth on the
 list of Galois $S_6$ fields ordered by root discriminant.

 \section{Lower bounds in large degrees}  
\label{asymp} 
In this section, we 
continue our practice of assuming the Artin conjecture and Riemann hypothesis
for the relevant $L$-functions.  For $n$ a positive integer, let
$\Delta_1(n)$ be the smallest root discriminant of a degree $n$ field.   As illustrated by Figure~\ref{contourM}, one has, 
\begin{equation} 
\label{lowfield}
\liminf_{n \to \infty} \Delta_1(n) \geq \Omega \approx 44.7632.
\end{equation}
Now let $\delta_1(n)$ be the smallest root conductor
of an absolutely irreducible degree $n$ Artin representation.  Theorem~4.2 of \cite{PM} uses the quadratic method to conclude that 
 $\delta_1(n) \geq 6.59 e^{(-13278.42/n)^2}$.  If one repeats the argument there without concerns for effectivity, one gets
 \begin{equation}
 \label{lowrep}
\liminf_{n \to \infty} \delta_1(n) \geq \sqrt{\Omega} \approx 6.6905.
\end{equation}
The contrast between \eqref{lowfield} and \eqref{lowrep} is striking,  and raises
the question of whether $\sqrt{\Omega}$ in \eqref{lowrep} can be increased at least part way 
to $\Omega$.    

\subsection{The constant $\Omega$ as a limiting lower bound.} 
The next corollary makes use of the extreme character values
$\widecheck{\chi}$ and $\widehat{\chi}$ introduced at the beginning of
Section~\ref{choices}.  It shows that if one restricts the type, then
one can indeed increase $\sqrt{\Omega}$ all the way to $\Omega$.  We
formulate the corollary in the context of rationally irreducible
characters, to stay in the main context we have set up.  However via
\eqref{discequal}, it can be translated to a statement about
absolutely irreducible characters.

\begin{cor} 
  \label{limitcor} Let $(G_k,\chi_k)$ be a sequence of rationally
  irreducible Galois types of degree $n_k = \chi_k(e)$ .  Suppose that
  the number of irreducible constituents $(\chi_k,\chi_k)$ is bounded,
  $n_k \rightarrow \infty$, and either
\begin{description}
\item[A] $\widecheck{\chi}_k/n_k \rightarrow 0$, or
\item[B] $\widehat{\chi}_k/n_k \rightarrow 0$.
\end{description}  
Then, assuming the Artin conjecture and Riemann hypothesis for
relevant $L$-functions,
\begin{equation}
\label{limlowerbound}
\liminf_{k \to \infty} \delta_1(G_k,\chi_k) \geq \Omega.
\end{equation}
\end{cor}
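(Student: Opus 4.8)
The plan is to apply Theorem~\ref{thm2} to each pair $(G_k,\chi_k)$ with a well-chosen nonnegative auxiliary character $\phi$: the linear character $\phi_L=\chi_k+\widecheck{\chi}_k$ in case \textbf{A}, and the square character $\phi_S=\chi_k^2$ in case \textbf{B}. In each case the resulting lower bound has the shape $M(\nu_k,\rho_k,u_k)^{\beta_k}$ with $\rho_k\ge 0$, and the content is that the exponent satisfies $\beta_k\to 1$ (rather than the $\approx 1/2$ of Theorem~\ref{thm1}) while $\nu_k/u_k\to\infty$, so that the base $M(\nu_k,\rho_k,u_k)$ tends to $\Omega$. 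The one delicate point is that \eqref{asymptotic1} is asserted only for a fixed normalized signature $\epsilon$, whereas here the signatures $\chi_k(c)$ vary; this is handled by observing that $M(\nu,\rho,u)$ is nondecreasing in $\rho$ (immediate from its definition, since $R\ge 0$), which lets us discard $\rho$ and reduce to the case $\epsilon=0$ of \eqref{asymptotic1}, namely $\lim_{\nu\to\infty}M(\nu,0)=\Omega$.

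In case \textbf{A}, recall from the beginning of Section~\ref{choices} that $-\widecheck{\chi}_k$ is the least value of $\chi_k$ and $-\widecheck{\chi}_k<0$; since $\chi_k$ is rational-valued this forces $\widecheck{\chi}_k\ge 1$, so $\phi_L$ is a genuine nonnegative character with $(\phi_L,1)=\widecheck{\chi}_k$. For any involution $c\in G_k$ and any Artin $L$-function $L(\Chi,s)$ of type $(G_k,c,\chi_k)$, formula \eqref{formlinear} together with Theorem~\ref{thm2} gives
\[
\delta_\Chi \ \ge\ M\!\left(n_k+\widecheck{\chi}_k,\ \chi_k(c)+\widecheck{\chi}_k,\ \widecheck{\chi}_k\right)^{(n_k+\widecheck{\chi}_k)/n_k}.
\]
Since $\chi_k(c)$ is a value of $\chi_k$ we have $\chi_k(c)+\widecheck{\chi}_k\ge 0$, so by the monotonicity above and the identity $M(\nu,\rho,u)=M(\nu/u,\rho/u)$ the base is at least $M\!\left((n_k+\widecheck{\chi}_k)/\widecheck{\chi}_k,\ 0\right)$. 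Hypothesis \textbf{A} gives $(n_k+\widecheck{\chi}_k)/\widecheck{\chi}_k\to\infty$, so this tends to $\Omega$, while the exponent $(n_k+\widecheck{\chi}_k)/n_k\to 1$. As the bound is uniform in $c$, taking the infimum over all $L$-functions of type $(G_k,\chi_k)$ and then $\liminf_k$ yields \eqref{limlowerbound}.

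In case \textbf{B}, the character $\phi_S=\chi_k^2$ is nonnegative with $(\phi_S,1)=(\chi_k,\chi_k)=:w_k$, and by hypothesis $w_k\le C$ for some constant $C$. For any $L$-function $L(\Chi,s)$ of type $(G_k,c,\chi_k)$, Theorem~\ref{thm2} and \eqref{mdef} give $\delta_\Chi\ge M(n_k^2,\chi_k(c)^2,w_k)^{b(G_k,\chi_k,\phi_S)}$, where by \eqref{aba2} and \eqref{formsquare} the exponent satisfies $b(G_k,\chi_k,\phi_S)\ge\walp(G_k,\chi_k,\phi_S)=n_k/(n_k+\widehat{\chi}_k)$, which tends to $1$ under hypothesis \textbf{B}. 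Using $\chi_k(c)^2\ge 0$, the monotonicity of $M$ in its middle slot, and the identity $M(\nu,\rho,u)=M(\nu/u,\rho/u)$, the base is at least $M(n_k^2/w_k,0)\ge M(n_k^2/C,0)$, which tends to $\Omega$ since $n_k\to\infty$. In particular the base exceeds $1$ for large $k$, so replacing the exponent $b(G_k,\chi_k,\phi_S)$ by the possibly smaller $n_k/(n_k+\widehat{\chi}_k)$ only decreases the right-hand side; hence
\[
\liminf_{k\to\infty}\delta_1(G_k,\chi_k)\ \ge\ \liminf_{k\to\infty} M\!\left(n_k^2/C,\ 0\right)^{\!n_k/(n_k+\widehat{\chi}_k)}\ =\ \Omega .
\]

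The real work is the selection of $\phi$: $\phi_L$ is essentially forced in case \textbf{A}, being the only one of our four candidates whose exponent $(n+\widecheck{\chi})/n$ tends to $1$ when $\widecheck{\chi}/n\to 0$ irrespective of $\widehat{\chi}$, while in case \textbf{B} the boundedness hypothesis is exactly what makes $\phi_S(e)/(\phi_S,1)=n_k^2/(\chi_k,\chi_k)\to\infty$ (and the exponent $n_k/(n_k+\widehat{\chi}_k)\to 1$). By \eqref{discequal} the conclusion transfers verbatim to any absolutely irreducible constituent of $\chi_k$, as remarked before the statement.
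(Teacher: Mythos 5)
Your proof is correct and follows essentially the same route as the paper: Case~A is handled identically via the linear auxiliary character $\phi_L$ and \eqref{formlinear}, while for Case~B the paper uses the Galois auxiliary character $\phi_G$ via \eqref{formula3a} rather than your $\phi_S$ — but the paper itself remarks immediately after the proof that ``the square auxiliary character would work equally well through \eqref{formsquare},'' so this is the same argument. Your version is actually somewhat more careful than the paper's (which simply asserts ``the second argument does not matter, the third argument does not matter either by boundedness''): you justify discarding the signature slot by noting $R\ge 0$, handle the $u$-slot via the scaling $M(\nu,\rho,u)=M(\nu/u,\rho/u)$ together with monotonicity in the first slot, and explicitly note that the base exceeds $1$ so that lowering the exponent from $b$ to $\walp$ is safe.
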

\begin{proof}
For Case A, Theorem~\ref{thm2} using a linear auxiliary character as in \eqref{formlinear} says 
\[
\delta_1(G_k,\chi_k)  \geq  M \left( \frac{n_k}{\widecheck{\chi}_k} + 1, \frac{r_k}{\widecheck{\chi}_k} + 1,(\chi_k,\chi_k)  \right)^{{1+\widecheck{\chi}_k/{n_k}} }.
\]
For Case B, Theorem~\ref{thm2} using a Galois auxiliary character as in \eqref{formula3a} says
\[
\delta_1(G_k,\chi_k)  \geq  M \left( |G_k| , 0 ,(\chi_k,\chi_k) \right)^{1 - {\widehat{\chi}}/{n_k}}.
\]
In both cases, the first argument of $M$ tends to infinity, the second
argument does not matter, the third argument does not matter either by
boundedness, and the exponent tends to $1$.  By \eqref{asymptotic1},
these right sides thus have an infimum limit of at least $\Omega$,
giving the conclusion \eqref{limlowerbound}.
\end{proof}

For the proof of Case~B, the square auxiliary character would work
equally well through \eqref{formsquare}.  Also~\eqref{lowfield},
\eqref{lowrep}, and Corollary~\ref{limitcor} could all be strengthened
by considering the placement of complex conjugation.  For example,
when restricting to the totally real case $c=e$, the $\Omega$'s in
\eqref{lowfield}, \eqref{lowrep}, and \eqref{limlowerbound} are simply
replaced by $\Theta \approx 215.3325$.

\subsection{Four contrasting examples.} Many natural sequences of types are covered by either Hypothesis A or Hypothesis B, but some are not.
 Table~\ref{limitcor} summarizes four sequences which we discuss together with some related sequences next.

\begin{table}[htb]
{
\[
\begin{array}{c| l |cccc|cc}
G_k & \multicolumn{1}{c|}{\chi_k} & \widecheck{\chi}_k  & \widehat{\chi}_k & n & |G| & \mbox{A}& \mbox{B}  \\
\hline 
   \PGL_2(k) &\mbox{Steinberg}        &         1 & 1 & k & k^3-k & \checkmark & \checkmark \\
          S_{k} &   \mbox{Reflection}     &          1 & k-3 & k-1 & k! & \checkmark \\
    2_\epsilon^{1+2k} &  \mbox{Spin} &               2^k  & 0 & 2^k & 2^{1+2k} & & \checkmark \\
                  2^k.S_k &  \mbox{Reflection} &               k &  k-2 & k & 2^k k! &   &   
  \end{array}
\]
}
\caption{\label{fourasymp} Four sequences of types, with Corollary~\ref{limitcor} applicable to the first three.}
\end{table}

\subsubsection{The group $\PGL_2(k)$ and its characters of degree $k-1$, $k$, and $k+1$.}  
\label{pglk}
In the sequence $(\PGL_2(k),\chi_k)$ from the first line of Table~\ref{fourasymp}, the index 
$k$ is restricted to be a prime power.  The
permutation character $\phi_{k+1}$ arising from the natural action
of $\PGL_2(k)$ on $\mathbb{P}^1(\mathbb{F}_k)$ decomposes as
$1+\chi_k$ where $\chi_k$ is the Steinberg character.
Table~\ref{fourasymp} says that the ratios $\widecheck{\chi}_k/n_k$ and  $\widehat{\chi}_k/n_k$ are both $1/k$,
so Corollary~\ref{limitcor}
applies through both Hypotheses A and B.  

The conductor of $\chi_k$ is the absolute discriminant of
the degree $k+1$ number field with character $\phi_{k+1}$.  Thus, in this instance, \eqref{limlowerbound} is
already implied by the classical \eqref{lowfield}.  However, the other nonabelian 
irreducible characters $\chi$ of $\PGL_2(k)$ behave very similarly 
to $\chi_k$.  Their dimensions are in $\{k-1,k,k+1\}$  and 
their values besides $\chi(e)$ are all in $[-2,2]$.  
So suppose for each $k$, an arbitrary nonabelian rationally
irreducible character $\chi_k$ of $\PGL_2(k)$ were chosen, in such a
way that the sequence $(\chi_k,\chi_k)$ is bounded.  Then
Corollary~\ref{limitcor} would again apply through both Hypotheses A
and B.  But now the $\chi_k$ are not particularly closely related to
permutation characters.

\subsubsection{The group $S_{k}$ and its canonical characters.} 
\label{sk}    
As with the last 
example, the permutation character $\phi_{k}$ arising from the natural action
of $S_{k}$ on $\{1,\dots,k\}$ decomposes as $1 + \chi_k$ where $\chi_k$ 
is the reflection character with degree $k-1$.  The second line of Table~\ref{fourasymp} shows that 
Corollary~\ref{limitcor} applies through Hypothesis A.  In fact, using the linear
auxiliary character underlying Hypothesis A here is essential; 
the limiting lower bound coming from the square or quadratic auxiliary characters is 
$\sqrt{\Omega}$, and this lower bound is just $1$ from the Galois auxiliary character.

Again in parallel to the 
previous example, the familiar sequence $(S_k,\chi_k)$ of types needs
to be modified to make it a good illustration of the applicability of Corollary~\ref{limitcor}.
 Characters of $S_k$ are most commonly indexed by partitions of $k$,
with $\chi_{(k)} =1$, $\chi_{(k-1,1)}$ being the reflection character, and $\chi_{(1,1,\dots,1,1)}$ 
being the sign character.  However an alternative convention is to include explicit reference to the 
degree $k$ and then omit the largest part of the partition, so that the above three
characters have the alternative names $\chi_{k,()}$, $\chi_{k,(1)}$, and $\chi_{k,(1,\dots,1,1)}$.  
With this convention, one can prove that for any fixed partition $\mu$ of a positive integer $m$, the sequence 
of types $(G_k,\chi_{k,\mu})$ satisfies Hypothesis A but not B.   

The case of general $\mu$ is well represented by the two cases where $m=2$.  In these two cases, information
in the same format as Table~\eqref{fourasymp} is
\[
{\def\arraystretch{1.3}
\begin{array}{c| l |cll}
G_k & \multicolumn{1}{c|}{\chi_k} & \multicolumn{1}{c}{\widecheck{\chi}_k}  &  \multicolumn{1}{c}{\widehat{\chi}_k} &  \multicolumn{1}{c}{n}  \\
\hline 
          S_{k} &   \chi_{k,(1,1)}    &   {\lfloor \frac{k-1}{2} \rfloor }& { \frac{1}{2} (k-2)(k-5)} & {\frac{1}{2} (k-1)(k-2)}  \\
          S_{k} &   \chi_{k,(2)}    &  1 & { \frac{1}{2} (k-2)(k-5)} + 1& {\frac{1}{2} (k-1)(k-2)} -1 \\
  \end{array}.
  }
\]
Let $X_{k,m}$ be the $S_k$-set consisting of $m$-tuples of distinct elements of $\{1,\dots,k\}$.  Then its permutation
character $\phi_{k,m}$ decomposes into $\chi_{k,\mu}$ with $\mu$ a partition of an integer $\leq m$.  These
formulas are uniform in $k$, as in 
\[
\phi_{k,2} = \chi_{k,(1,1)} + \chi_{k,(2)} + 2 \chi_{k,(1)} + \chi_{k,()}.
\]
For $\mu$ running over partitions of a large integer $m$, the characters $\chi_{k,\mu}$ can be
reasonably regarded as  quite far from permutation characters, and they thus serve as a better 
illustration of Corollary~\ref{limitcor}.   The sequences $(S_k,\chi_{k,\mu})$ satisfy Hypothesis A but not B, because
$n_k$ and $\widehat{\chi}_k$ grow polynomially as $k^m$, while $\widecheck{\chi}_k$ grows
polynomially with degree $<m$.  

\subsubsection{The extra-special group $2_\epsilon^{1+2k}$ and its degree $2^k$ character}  
\label{esk}
Fix $\epsilon \in \{+,-\}$.  
Let $G_k$ be the extra-special $2$-group of type $\epsilon$ 
and order $2^{1+2k}$, so that $2^{1+2}_+$ and $2^{1+2}_{-}$ 
are the dihedral and quaternion groups respectively.   These
groups each have exactly one irreducible character of degree
larger than $1$, this degree being $2^k$.
There are just three character values, $-2^k$, $0$, and $2^k$.   For these
two sequences, Corollary~\ref{limitcor} again applies, 
but now only through Hypothesis B.  

\subsubsection{The Weyl group $2^k.S_k$ and its degree $k$ reflection character}  The
Weyl group $W(B_k) \cong 2^k.S_k$ of signed permutation 
matrices comes with its defining degree $k$ character $\chi_k$. 
Here, as indicated by the fourth line of Table~\ref{fourasymp},
neither hypothesis of Corollary~\ref{limitcor} applies.

However the conclusion \eqref{limlowerbound} of Corollary~\ref{limitcor} 
continues to hold as follows.   Relate the character $\chi_k$ in question to the two
standard permutation characters of $2^k.S_k$ via
$\phi_{2k} = \phi_k + \chi_k$.
For a given $2^k.S_k$ field,
$D_{\Phi_{2k}}=D_{\Phi_k}D_{\Chi_k}$.  But, since $\Phi_k$
corresponds to an index $2$ subfield of the degree $2k$ number field
for $\Phi_{2k}$, we have $D_{\Phi_k}^2\mid D_{\Phi_{2k}}$.  Combining
these we get $D_{\Phi_k} \mid D_{\Chi_k}$ and hence $\delta_{\Phi_k} < \delta_{\Chi_k}$.  
So \eqref{lowfield} implies \eqref{limlowerbound}.  

\subsection{Concluding speculation} \label{speculation}
As we have illustrated in \S\ref{pglk}--\ref{esk}, both Hypothesis A and Hypothesis B
are quite broad.  This breadth, together with the fact that the conclusion \eqref{limlowerbound} still holds for our last sequence, raises
the question of whether \eqref{limlowerbound} can be formulated more universally.  While the evidence is far from definitive,
we expect a positive answer.  Thus we expect that the first accumulation point of the numbers $\delta_1(G,\chi)$ 
is at least $\Omega$, where $(G,\chi)$ runs over all types with $\chi$ irreducible.  
Phrased differently, we expect that the first accumulation point of the root 
conductors of all irreducible Artin $L$-functions is at least $\Omega$.

\bibliographystyle{amsalpha}
\bibliography{jr}

\end{document}